\providecommand{\U}[1]{\protect\rule{.1in}{.1in}}
\newtheorem{theorem}{Theorem}[section]
\newtheorem{corollary}[theorem]{Corollary}
\newtheorem{definition}[theorem]{Definition}
\newtheorem{proposition}[theorem]{Proposition}
\newtheorem{remark}[theorem]{Remark}
\newtheorem{lemma}[theorem]{Lemma}
\newtheorem{example}[theorem]{Example}
\newtheorem{hypothesis}[theorem]{Hypothesis}
\numberwithin{equation}{section}
\begin{document}
\title[KdV equation]{Soliton theory and Hankel operators }
\author{Sergei Grudsky}
\address{Departamento de Matematicas, CINVESTAV del I.P.N. Aportado Postal 14-740,
07000 Mexico, D.F., Mexico.}
\email{grudsky@math.cinvestav.mx.}
\author{Alexei Rybkin}
\address{Department of Mathematics and Statistics, University of Alaska Fairbanks, PO
Box 756660, Fairbanks, AK 99775}
\email{arybkin@alaska.edu}
\thanks{The first author is supported by PROMEP (M\'{e}xico) via "Proyecto de Redes",
by CONACYT grant 180049, and by Federal program \textquotedblleft Scientific
and Scientific-Pedagogical Personnel of Innovative Russia for the years
2007-2013\textquotedblright\ (contract No 14.A18.21.0873) }
\thanks{The second author is supported in part by the NSF under grant DMS 1009673.}
\date{October, 2013}
\subjclass{34B20, 37K15, 47B35}
\keywords{KdV equation, Hankel operators.}

\begin{abstract}
Soliton theory and the theory of Hankel (and Toeplitz) operators have stayed
essentially hermetic to each other. This paper is concerned with linking
together these two very active and extremely large theories. On the
prototypical example of the Cauchy problem for the Korteweg-de Vries (KdV)
equation we demonstrate the power of the language of Hankel operators in which
symbols are conveniently represented in terms of the scattering data for the
Schrodinger operator associated with the initial data for the KdV equation.
This approach yields short-cuts to already known results as well as to a
variety of new ones (e.g. wellposedness beyond standard assumptions on the
initial data) which are achieved by employing some subtle results for Hankel
operators. \newline

\end{abstract}
\maketitle

\section{Preface}

The title reflecting the final results of the work should have been formulated
as something like \textquotedblleft On the Inverse Scattering Transform for
the Korteweg-de Vries equation on the line with essentially arbitrary initial
data decaying sufficiently rapidly on the right half line\textquotedblleft.
However, that would have narrowed the number of potential readers down to a
rather small group of specialists interested in the so-called step-like
initial data, which was not our intention. Hoping to draw attention of a much
larger community of mathematicians and theoretical physicists whose lexicon
intersects with ours, we have risked compilation of the title merely from the
names of two enormously large areas which we have the goal to connect.

The name soliton theory in the title of the paper underlines that our main
results belong to this theory. The term Hankel operators, in turn, should
suggest that the exposition is based upon the theory of Hankel operators.
While having experienced a boom at the same time, these two theories have not
shown much of interaction. It is our main goal to demonstrate that the
language of Hankel/Toeplitz operators is very natural for soliton theory
(completely integrable systems) with lots of potentials. We would also like to
capture the attention of the Hankel/Toeplitz operator community who may find
something new in the theoretical aspect, but above all get acquainted with
applications of Hankel operators to the inverse scattering transform for
integrable systems, which may have a stimulating influence on the development
of Hankel and Toeplitz operators.

Our purpose determines the style of the paper: maximally self-contained
exposition with recall of basic definitions and auxiliary results.

We concentrate solely on the Korteweg-de Vries (KdV) case but it should be
quite clear to anyone familiar with the area that our approach by no means is
restricted to this case. Moreover, we believe that the interplay between
soliton theory and Hankel operators may be even more interesting and fruitful
for some other integrable systems with richer than KdV structures.

\section{Introduction}

Soliton theory originated in the mid 60s from the fundamental
Gardner-Greene-Kruskal-Miura discovery of what we now call the inverse
scattering transform (IST) for the KdV equation. It is regarded as a major
achievement of the 20s century connecting different branches of pure
mathematics and theoretical physics with numerous applications ranging from
hydrodynamics and nonlinear optics to astrophysics and elementary particle
theory (see, e.g. the classical books \cite{AC91}, \cite{NovikovetalBook}).
Conceptually, the IST is similar to the Fourier transform. In the context of
the Cauchy problem for the KdV equation on the full line%

\begin{equation}
\partial_{t}q-6q\partial_{x}q+\partial_{x}^{3}q=0, \label{KdV}%
\end{equation}%
\begin{equation}
q\left(  x,0\right)  =q\left(  x\right)  , \label{KdVID}%
\end{equation}
the IST method consists, as the standard Fourier transform method, of\ three steps:

\textbf{Step 1. }(direct transform)%
\[
q\left(  x\right)  \longrightarrow S_{q},
\]
where $S_{q}$ is a new set of variables which turns (\ref{KdV}) into a simple
first order linear ODE for $S_{q}(t)$ with the initial condition
$S_{q}(0)=S_{q}$.

\textbf{Step 2. }(time evolution)
\[
S_{q}\longrightarrow S_{q}\left(  t\right)  .
\]

\textbf{Step 3. }(inverse transform)\textbf{\ }%
\[
S_{q}\left(  t\right)  \longrightarrow q(x,t).
\]

Similar methods have also been developed for many other evolution nonlinear
PDEs, which are referred to as completely integrable.\footnote{There is no
precise meaning of complete integrability but the question \textquotedblleft
What is integrability?" has drawn much attention (see e.g. the survey
\cite{Its2003}).} \ Each of steps 1-3 involves solving a linear equation that
allows us to analyze integrable systems at the level unreachable by neither
direct numerical methods nor standard PDE techniques. The study of a large
variety of realizations of these steps for different integrable systems and
initial conditions (including the analysis of the information that the IST
yields about these systems) constitutes the core of soliton theory.

In the classical IST for (\ref{KdV})-(\ref{KdVID}), when $q$ is rapidly
decaying as $\left\vert x\right\vert \rightarrow\infty$ (the so-called short
range),$\ S_{q}$ is a set of scattering data associated with the
Schr\"{o}dinger operator $\mathbb{L}_{q}=-\partial_{x}^{2}+q$. By solving the
Schr\"{o}dinger equation $\mathbb{L}_{q}u=k^{2}u$ one finds $S_{q}=\left\{
R(k),(\kappa_{n},c_{n})\right\}  $, where $R\left(  k\right)  ,\ k\in
\mathbb{R}$, is the reflection coefficient and $(\kappa_{n},c_{n}%
),\ n=1,2,..,N$, are so-called bound state data associated with with
eigenvalues $-\kappa_{n}^{2}$ of \thinspace$\mathbb{L}_{q}$. Step 2 readily
yields
\begin{equation}
S_{q}(t)=\left\{  R(k)\exp\left(  8ik^{3}t\right)  ,\;\kappa_{n},c_{n}%
\exp\left(  8\kappa_{n}^{3}t\right)  \right\}  . \label{time evol}%
\end{equation}
Step 3 amounts to solving the inverse scattering problem of recovering the
potential $q\left(  x,t\right)  $ (which now depends on $t\geq0$) from
$S_{q}(t)$. This procedure comes with an explicit formula, called determinant
or Dyson,%

\begin{equation}
q\left(  x,t\right)  =-2\partial_{x}^{2}\log\det\left(  I+\mathbb{H}\left(
x,t\right)  \right)  , \label{eq1.3}%
\end{equation}
where $\mathbb{H}\left(  x,t\right)  $ is the Hankel operator $\mathbb{H}%
\left(  \varphi_{x,t}\right)  $ with symbol
\begin{equation}
\varphi_{x,t}(k)=R(k)\xi_{x,t}(k)+\sum_{n=1}^{N}\frac{c_{n}\xi_{x,t}%
(i\kappa_{n})}{\kappa_{n}+ik}. \label{symb}%
\end{equation}
Here $\xi_{x,t}(k)=\exp\{i(8k^{3}t+2kx)\}$ solely carries the dependence on
$\left(  x,t\right)  $. This puts us in the context of the theory of Hankel
operators and Steps 1-3 can now be combined to read%
\begin{equation}
q(x)\longrightarrow\mathbb{H}(\varphi_{x,t})\longrightarrow q(x,t).
\label{steps 1-3}%
\end{equation}

Note that there are many other methods to carry out Step 3. Historically, the
first one, called Gelfand-Levitan-Marchenko, amounts to working with
$\mathbb{H}\left(  x,t\right)  $ in the form of an integral (Marchenko)
operator which kernel is the Fourier transform of $\varphi_{x,t}$. The most
contemporary one is based upon the Riemann-Hilbert problem which is solved
using techniques of singular integral equations. Overall, while both look
quite similar, the latter is arguably much more powerful as the Fourier
transform in a sense smears the dependence on $\left(  x,t\right)  $. Our
approach also starts out from a Riemann-Hilbert problem (the basic scattering
relation (\ref{basic scatt identity})) which we solve in terms of Hankel
operators in the form (\ref{eq4.1}), but not in the Marchenko form. This gives
us a direct access to the well-developed theory of Hankel operators where, in
fact, the integral form is not used much either.

Thus (\ref{steps 1-3}) suggests the relevance of soliton theory to Hankel
operators which theory is also extremely large. While both theories have been
developed at about same time, there has been very little interaction between
the two. The connection we have discussed so far is rather skin deep and does
not offer immediate benefits to either theory. This is likely true if we stay
within the realm of short range initial data $q$ but our goal is to go far
beyond this.

Let us put our goal into historic context. An analog of the IST for periodic
$q$'s was found in \cite{Novikov74}. It rallies on the Floquet theory for
$\mathbb{L}_{q}$ and analysis of Riemann surfaces and hence is much more
complex than the short range case\footnote{Unification of these two cases is
offered in \cite{AC91} as an open problem.}. However, as emphasized in
\cite{KrichNovikov99}, (\ref{KdV})-(\ref{KdVID}) is completely integrable
essentially only in these two cases. In fact, the question whether any
well-posed problem (initial value, boundary value, etc.) for (\ref{KdV}%
)\footnote{Or any other integrable system.} could be solved by a suitable IST,
has been raised in one form or another by many (see e.g. \cite{AC91},
\cite{KrichNovikov99}, \cite{McLeodOlver83}) and some regard it as a major
unsolved problem. Once we are outside of the scattering or periodic situations
many real complications arise. The main question is of course what $S_{q}$
should be. A large amount of effort has been put into developing the IST on
intervals (see, e.g., the recent sequel \cite{Fokas2012},
\cite{LenellsFokas2012}, \cite{LenellsFokasII 2012}). We only mention that
$S_{q}$ consists of the certain spectral functions (some of which depend on
the initial data and some on the boundary data) related through a nonlinear
algebraric equation (the global relation\footnote{Solving this relation is the
only nonlinear step in the IST procedure.}).

Here we are interested in the initial value problem on the whole line but with
$q$ outside of classes of short range or periodic functions. Much of known
(rigorous) results are on some sort of \textquotedblleft hybrids" of these two
cases. Namely, physically important cases of short-range perturbations of a
step function\footnote{A bore wave type initial profile.} (see e.g.
\cite{Cohen1984}, \cite{EgorovaetalNon2013}, \cite{Hruslov76}, \cite{KK94},
\cite{Venak86} ) and two crystals fused together\footnote{Soliton propogation
on a periodic backgound.} (\cite{Teschl1}, \cite{Teschl2}). Steps 1-3 are much
more complicated than in the short range case and require subtle analysis
which has been completed only recently. Certain cases of slowly decaying
profiles have also received considerable attention (see e.g.
\cite{GesztesyDuke92}, \cite{Marchenko91}, \cite{Matveev2002}), but this
situation is much less understood.

In the present paper we deal with initial data subject to

\begin{hypothesis}
\label{hyp1.1}Let $q$ be a real locally integrable function subject to

\begin{enumerate}
\item[(1)] (boundedness from below)
\begin{equation}
\inf\operatorname{Spec}\left(  \mathbb{L}_{q}\right)  =-h_{0}^{2}>-\infty;
\label{Cond1}%
\end{equation}

\item[(2)] (decay at $+\infty$) For some positive weight function $w\left(
x\right)  \geq x$%
\begin{equation}
\int^{\infty}w\left(  x\right)  \left\vert q\left(  x\right)  \right\vert
dx<\infty. \label{Cond2}%
\end{equation}

\end{enumerate}
\end{hypothesis}

We show that under Hypothesis \ref{hyp1.1}\footnote{In fact, we will be a bit
conservative regarding $w$ for some inessential reasons.} (\ref{KdV}%
)-(\ref{KdVID}) is globally well-posed and completely integrable in the sense
that (\ref{steps 1-3}) can be explicitly realized. Note that our class of
initial profiles, which we call step-like, is extremely large. The condition
\begin{equation}
\operatorname*{Sup}\limits_{x}\int_{x}^{x+1}\max\left(  -q,0\right)  <\infty,
\label{Cond_q}%
\end{equation}
is sufficient for (\ref{Cond1}) and is also necessary if $q\leq0$. Therefore,
any $q$ subject to Hypothesis \ref{hyp1.1} is essentially bounded from below,
decay sufficiently fast at $+\infty$ but arbitrary otherwise.

The main feature of our situation is that we can do one sided\ scattering
theory and define a suitable (right) reflection coefficient $R\left(
k\right)  $. The problem is that $R$ need not have smoothness and decay
properties that the machinery of the classical IST relies on. We overcome
these issues by extracting a part $A$ from $R$ (Propositions
\ref{step like refl} and \ref{props of R}) which mimics the irregular behavior
of $R\left(  k\right)  $ for real $k$ but admits an analytic continuation into
the upper half plane. The rest of $R\left(  k\right)  $ is (in a sense) small
and easily controlled by the behavior of $q$ at $+\infty$. What actually makes
this split work is that $A$ can be chosen to keep all the necessary
information about the negative spectrum of the whole operator $\mathbb{L}%
_{q}.$\footnote{I.e., $(\kappa_{n},c_{n})$ in the classical case.} This split
(written in a different form) was a crucial ingredient of our
\cite{RybCommPDEs2013} in dealing with similar to Hypothesis \ref{hyp1.1}
conditions but under extra hard-to-verify assumptions. The problem is that
Hypothesis \ref{hyp1.1} does not rule out the case $\left\vert R\left(
k\right)  \right\vert =1$ for a.e. real $k$~which further complicates the
situation. In the quantum mechanical sense, such $q$'s are completely
nontransparent for plane waves coming from $+\infty$. Examples include (1)
functions growing (arbitrarily fast) at $-\infty$ (not quite physical), (2)
Gaussian white noise on a left half line (like the stock market), (3) certain
sparse sequences of bumps (Pearson blocks \cite{Pearson78}), and (4) certain
(random) slowly decaying at $x\rightarrow-\infty$ functions (Kotani potentials
\cite{KoU88}), to mention just four. Note that if $R\left(  k\right)  $ is
unimodular then all previously known approaches to step-like initial
conditions break down in a serious way. Our approach developed in
\cite{RybNON2011} could handle this case but some inconvenient conditions had
to be imposed to rule out the possibility for $\mathbb{H}(\varphi_{x,t})$ to
have eigenvalue $-1$ for some $\left(  x,t\right)  $. Otherwise, as it follows
from (\ref{eq1.3}), $q\left(  x,t\right)  $ will develop a double pole type
singularity at that point causing (\ref{KdV})-(\ref{KdVID}) to be ill-posed in
many respects. One of our main contributions is showing that under Hypothesis
\ref{hyp1.1} $-1$ never belongs to the spectrum of $\mathbb{H}(\varphi_{x,t})$
and we prove it by means of Hankel/Toeplitz operators. It is curious to note
that the analyticity of $\xi_{x,t}(k)=\exp\{i(8k^{3}t+2kx)\}$ is not essential
here but rather its membership in the Sarason algebra $H^{\infty}+C$ (see
Section \ref{H^infty+C} for definition), a fundamental object of the theory of
Hankel/Toeplitz operators (see, e.g. \cite{BotSil06},\cite{Nik2002}%
,\cite{Peller2003}). The latter is established in \cite{DybGru2002} in a
totally different context. It is this nontrivial fact that opens an access to
the powerful machinery of Hankel/Toeplitz operators.

We also crucially use the Adamyan-Arov-Krein theory (see, e.g. \cite{Nik2002},
\cite{Peller2003}) which provides us with a beautiful way to compute singular
numbers of Hankel operators that, in turn, gives very accurate error estimates
for KdV solutions. The analyticity of $\xi_{x,t}(k)$, not its membership in
the Sarason algebra, becomes important here.

It is the use of the Sarason algebra and Adamyan-Arov-Krein theory that has
let us conclude our study \cite{RybCommPDEs2013}, \cite{RybNON2011} of
complete integrability of (\ref{KdV})-(\ref{KdVID}) with step-like initial
data. Moreover we see that many other results and problems of soliton theory
become more transparent once translated into the language of Hankel operators.
We are now convinced that the theory of Hankel/Toeplitz operators has
potentially much more to offer to soliton theory encouraging closer
interaction between two theories. This point of view is also reinforced by the
recent papers \cite{Gerard2012}, \cite{Gerard2010} where a totally different
set of results on soliton theory was obtained using Hankel operators.

Our goal was to make the paper as self-contained and rigorous as possible
supplemented with proofs of known results whenever it is instructive and can
be done in a nice way. This always presents a challenge if you also want to
keep the volume reasonable. We are not sure if our goal is achieved but we
tried our best.

The paper is organized as follows. In Section \ref{notaiton} we merely list
some of our notation and conventions and give a short review of Hardy and
Gevrey classes. Sections \ref{hankel} and \ref{H^infty+C} are devoted to
reviewing some background information and preparing some facts about Hankel
operators needed in the following sections. In Section \ref{sec6} we recall
some basics of the classical IST and reformulate it in terms of Hankel
operators. In Section \ref{Refl} we study the analytic structure of the
reflection coefficient which is crucially used in the following sections. In
Section \ref{Our hankel} we state and prove some principal properties of our
specific Hankel operator. In Section \ref{s-numbers} we use the
Adamyan-Arov-Krein classical theory to obtain subtle relations between the
decay of singular numbers of our Hankel operator and properties of the initial
data $q$. Section \ref{main section} is devoted to our main result, Theorem
\ref{MainThm}. Its corollaries, as well as related discussions and historical
comments are given in Section \ref{corollaries}. In the final Section
\ref{last sect} we state some open problems.

\section{Notation and Function Classes\label{notaiton}}

\subsection{Basic notation and conventions\label{basic notation}}

We follow standard notation accepted in Analysis. For number sets:
$\mathbb{N}_{0}=\left\{  0,1,2,...\right\}  $, $\mathbb{R}$ is the real line,
$\mathbb{R}_{\pm}=(0,\pm\infty)$, $\mathbb{C}$ is the complex plane,
$\mathbb{C}^{\pm}=\left\{  z\in\mathbb{C}:\pm\operatorname{Im}z>0\right\}  $.
$\overline{z}$ is the complex conjugate of $z.$

Besides number sets, black board bold letters will also be used for (linear)
operators. In particular, $\mathbb{I}$ denotes the identity operator.
$\mathbb{A}^{\ast}$ stands for the adjoint of a linear operator $\mathbb{A}$
on a Hilbert space. For a given compact operator $\mathbb{A}$ on a Hilbert
space, we recall that its $n$-th singular value $s_{n}\left(  \mathbb{A}%
\right)  $ is defined as the $n$-th eigenvalue of the operator $\left(
\mathbb{A}^{\ast}\mathbb{A}\right)  ^{1/2}$. We say that $\mathbb{A}$ is in
the Shatten-von Neumann class $\mathfrak{S}_{p},0<p\leq\infty$, if $\left\{
s_{n}\left(  \mathbb{A}\right)  \right\}  \in l^{p}$. (Here $l^{p}$ stands for
the space of all sequences $\left(  x_{n}\right)  $ such that $\Sigma
_{n}\left\vert x_{n}\right\vert ^{p}<\infty$). We write $\mathbb{A}\geq0$ if
$\left\langle \mathbb{A}f,f\right\rangle \geq0$ for any $f$ from the domain of
$\mathbb{A}$. ($\left\langle \mathbb{\cdot},\cdot\right\rangle $ stands for
the inner product). $\mathbb{A}>0$ means that $\ \mathbb{A}\geq0$ and
$\left\langle \mathbb{A}f,f\right\rangle =0$ iff $f=0$. We write
$\mathbb{A}\geq\mathbb{B}$ if $\mathbb{A}-\mathbb{B}\geq0$.

Some other notation: $\chi\left(  x\right)  $ is the Heaviside function,
$\chi_{c}\left(  x\right)  :=\chi\left(  x-c\right)  $, and
\begin{equation}
\xi_{x,t}(k):=\exp\{i(8k^{3}t+2kx)\},\ \ \xi_{x}(k):=\xi_{x,0}(k)=\exp
\{2ikx\}, \label{cubic exp}%
\end{equation}
is a fundamental to the IST function.

We frequently (but not always) arrange the variables of a function in the
order of their importance. E.g. $f\left(  x,p\right)  $ should suggest that
$x$ is the main variable and $p$ is a parameter. To reduce the amount of
clutter we will often drop variables of functions whenever it causes no
confusion, abbreviate $\int f\left(  x\right)  dx=\int f$ and, when
appropriate, write $y\lesssim_{a}x$ in place of $y\leq C_{a}x$ with some
$C_{a}>0$ dependent on $a$ but independent of $x$. If $C$ is a universal
constant we then write $y\lesssim x$.

We use $\rightrightarrows$ to denote uniform convergence. In particular, we
agree to write $f_{n}\left(  z\right)  \rightrightarrows f\left(  z\right)  $
in $\mathbb{C}^{+}$ if $f_{n}\left(  z\right)  $ converges to $f\left(
z\right)  $ uniformly on compact subsets of $\mathbb{C}^{+}$ containing no
singularities of $f_{n}\left(  z\right)  ,f\left(  z\right)  $.

We have the following agreement on $\pm$ statements: $P_{\pm}\Rightarrow
Q_{\pm}$ means two separate statements $P_{+}\Rightarrow Q_{+}$,
$P_{-}\Rightarrow Q_{-}$. We then use $P_{\pm}$ as a single noun. If it is
used as a plural noun then $P_{\pm}$ means $P_{+}$ and $P_{-}$. Since
quantities labeled with $+$ will appear more frequently whenever convenient we
also drop $+$ and just write $P~$for $P_{+}$. (But we will never suppress the
subscript $-$.)

\subsection{Basic function classes}

As usual, $L^{p}\left(  S\right)  ,\ 0<p\leq\infty$, is the Lebesgue space on
a set $S$. Typically $S=\mathbb{R}$ which justifies the abbreviation
\[
\int\overset{\operatorname*{def}}{=}\int_{\mathbb{R}},\ \ \ L^{p}%
\overset{\operatorname*{def}}{=}L^{p}\left(  \mathbb{R}\right)  .
\]
And ($\partial_{x}^{n}:=\partial^{n}/\partial x^{n},\ n\in\mathbb{N}_{0}$)
\begin{align*}
&  C\overset{\operatorname*{def}}{=}\left\{  f:f\text{ is continuous on
}\mathbb{R}\text{, }\lim_{x\rightarrow\infty}f\left(  x\right)  =\lim
_{x\rightarrow-\infty}f\left(  x\right)  \neq\pm\infty\right\}  ,\ \\
&  C^{n}\overset{\operatorname*{def}}{=}\left\{  f:\partial_{x}^{n}f\in
C\right\}  ,\ n\in\ \mathbb{N}_{0};\ C^{\infty}\overset{\operatorname*{def}%
}{=}\cap_{n\in\ \mathbb{N}_{0}}C^{n}\text{.}%
\end{align*}

\subsection{Hardy classes}

To translate our problem into the language of Hankel/Toeplitz operators some
common definitions and facts are in order \cite{Garnett}.

A function $f$ analytic in $\mathbb{C}^{\pm}$ is in the Hardy space $H_{\pm
}^{p}$ for some $0<p\leq\infty$ if
\[
\Vert f\Vert_{H_{\pm}^{p}}^{p}\overset{\operatorname*{def}}{=}\sup_{y>0}\Vert
f(\cdot\pm iy)\Vert_{p}<\infty.
\]
We remind that by our convention we set $H^{p}=H_{+}^{p}.$

It is a fundamental fact of the theory of Hardy spaces that any $f\left(
z\right)  \in H_{\pm}^{p}$ with $0<p\leq\infty$ has non-tangential boundary
values $f\left(  x\pm i0\right)  $ for almost every (a.e.) $x\in\mathbb{R}$
and
\begin{equation}
\Vert f\Vert_{H_{\pm}^{p}}=\Vert f\left(  \cdot\pm i0\right)  \Vert_{L^{p}%
}\overset{\operatorname*{def}}{=}\left\Vert f\right\Vert _{p}.
\label{H^p norm}%
\end{equation}

Classes $H_{\pm}^{\infty}$ and $H_{\pm}^{2}$ will be particularly important.
$H_{\pm}^{\infty}$ is the algebra of uniformly bounded in $\mathbb{C}^{\pm}$
functions and $H_{\pm}^{2}$ is the Hilbert space with the inner product
induced from $L^{2}$:
\[
\langle f,g\rangle_{H_{\pm}^{2}}=\langle f,g\rangle_{L^{2}}=\left\langle
f,g\right\rangle =\int f\bar{g}.
\]

It is well-known that $L^{2}=H^{2}\oplus H_{-}^{2},$ the orthogonal (Riesz)
projection $\mathbb{P}_{\pm}$ onto $H_{\pm}^{2}$ being given by%
\begin{equation}
(\mathbb{P}_{\pm}f)(x)=\pm\frac{1}{2\pi i}\lim_{\varepsilon\rightarrow0+}%
\int\frac{f(s)ds}{s-(x\pm i\varepsilon)}\overset{\operatorname*{def}}{=}%
\pm\frac{1}{2\pi i}\int\frac{f(s)ds}{s-(x\pm i0)}. \label{eq3.1}%
\end{equation}
Of course
\begin{equation}
\mathbb{P}_{\pm}^{\ast}=\mathbb{P}_{\pm},\;\mathbb{P}_{\pm}^{2}=\mathbb{P}%
_{\pm},\;\mathbb{P}_{+}+\mathbb{P}_{-}=\mathbb{I}. \label{eq3.2}%
\end{equation}
Notice that for any $f\in H^{2}$ and $\lambda\in\mathbb{C}^{+}$
\begin{equation}
\mathbb{P}_{-}\frac{f\left(  \cdot\right)  }{\cdot-\lambda}=\mathbb{P}%
_{-}\frac{f(\cdot)-f(\lambda)}{\cdot-\lambda}+\mathbb{P}_{-}\frac{f(\lambda
)}{\cdot-\lambda}=\frac{f(\lambda)}{\cdot-\lambda}. \label{P_}%
\end{equation}
The operators given by (\ref{eq3.1}) remain bounded from $L^{p}$ to $H_{\pm
}^{p}$ for $1<p<\infty$. For $L^{\infty}$ the (regularized) Riesz projection
\begin{align}
(\widetilde{\mathbb{P}}_{\pm}f)(x)  &  =(x+i)\left(  \mathbb{P}_{\pm}\frac
{1}{\cdot+i}f\right)  (x),\label{eq3.3}\\
&  =\pm\frac{1}{2\pi i}\int\left(  \frac{1}{s-(x\pm i0)}-\frac{1}{s+i}\right)
f(s)ds,\;\ \ f\in L^{\infty}, \label{eq3.4}%
\end{align}
is clearly well-defined and $\widetilde{\mathbb{P}}_{\pm}f$ is analytic in
$\mathbb{C}^{\pm}$. Moreover $\widetilde{\mathbb{P}}_{\pm}$ is bounded from
$L^{\infty}$ to $\text{BMOA}(\mathbb{C}^{\pm})$. I.e.
\begin{equation}
f\in L^{\infty}\implies\widetilde{\mathbb{P}}_{\pm}f\in\text{BMOA}%
(\mathbb{C}^{\pm}). \label{BMOA}%
\end{equation}
Here $\text{BMOA}(\mathbb{C}^{\pm})$ stands for the well-known class of
analytic in $\mathbb{C}^{\pm}$ functions from $\text{BMO}(\mathbb{R})$. The
space $\text{BMO}(\mathbb{R})$ (Bounded Mean Oscillation) consists of locally
integrable functions $f$ on $\mathbb{R}$ satisfying ($I$ is a bounded
interval)
\[
\Vert f\Vert_{\text{BMO}}=\sup_{I\in\mathbb{R}}\frac{1}{|I|}\int_{I}%
|f-f_{I}|<\infty,\ \ \ f_{I}\overset{\operatorname*{def}}{=}\frac{1}{|I|}%
\int_{I}f.
\]
It is important to us that
\begin{equation}
\widetilde{\mathbb{P}}_{+}f+\widetilde{\mathbb{P}}_{-}f=f,\;f\in L^{\infty},
\label{eq3.5}%
\end{equation}
which immediately follows from (\ref{eq3.2}), (\ref{eq3.3}), and
(\ref{eq3.4}). One can also see from (\ref{eq3.4}) that if $f\in L^{2}$ then
$\widetilde{\mathbb{P}}_{+}f=\mathbb{P}_{+}f+\operatorname*{const},\;$and
$\widetilde{\mathbb{P}}_{+}f=\mathbb{P}_{+}f=f$ if $f\in H^{2}$.

We will occasionally use Blaschke products
\begin{equation}
B(z)=\prod\limits_{n\geq1}b_{n}(z),\;b_{n}(z)=\frac{1+z_{n}^{2}}{|1+z_{n}%
^{2}|}\frac{z-z_{n}}{z-\overline{z_{n}}}. \label{eq3.8}%
\end{equation}
Recall that the product\footnote{Assuming that $i$ is not a zero of $B$. If
$i$ is a zero then (\ref{eq3.8}) should be modified accordingly.} in
(\ref{eq3.8}) is uniformly convergent on compact sets in $\mathbb{C}^{+}$ iff
the Blaschke condition%
\begin{equation}
\sum_{n\geq1}\frac{\operatorname{Im}z_{n}}{1+|z_{n}|^{2}}<\infty
\label{Blaschke cond}%
\end{equation}
is satisfied. Moreover $B\in H^{\infty}$, $\left\vert B(z)\right\vert \leq1$
if $z\in\mathbb{C}^{+}$, $B$ is unimodular a.e. on $\mathbb{R}$, and has zeros
in $\mathbb{C}^{+}$ at $\left\{  z_{n}\right\}  .$

Most of our Blaschke products will have purely imaginary zeros $\left\{
ix_{n}\right\}  $ of multiplicity $1$. In this case (\ref{Blaschke cond}) is
equivalent to $\left\{  x_{n}\right\}  \in l^{1}$. Observe that such $B$ is
continuous on the real line away from $0$. The following simple statement is a
product analog of the Weierstrass M-test.

\begin{proposition}
\label{Product analog of m-test} Let $\left\{  x_{n}\left(  p\right)
\right\}  $ be a positive sequence dependent on a parameter $p$ (say
positive). Suppose that $x_{n}\left(  p\right)  \leq\lim_{p\rightarrow\infty
}x_{n}\left(  p\right)  =:x_{n}<\infty$ for each $n$. If $\left\{
x_{n}\right\}  \in l^{1}$ then
\[
B_{p}\left(  z\right)  :=\prod\limits_{n\geq1}\frac{z-ix_{n}\left(  p\right)
}{z+ix_{n}\left(  p\right)  }\underset{p\rightarrow\infty}{\rightrightarrows
}B\left(  z\right)  =\prod\limits_{n\geq1}\frac{z-ix_{n}}{z+ix_{n}}\text{ in
}\mathbb{C}^{+}.
\]

\end{proposition}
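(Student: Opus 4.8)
The plan is to prove uniform convergence on compact subsets of $\mathbb{C}^{+}$ by reducing the infinite product to a sum via the logarithm, and then controlling the sum uniformly in $p$ using the monotonicity hypothesis $x_{n}(p)\leq x_{n}$ together with $\{x_{n}\}\in l^{1}$. First I would fix a compact set $K\subset\mathbb{C}^{+}$; then there is a constant $\delta>0$ with $\operatorname{Im}z\geq\delta$ for all $z\in K$, so that each factor $\dfrac{z-ix_{n}(p)}{z+ix_{n}(p)}$ stays bounded away from $0$ (the denominator never vanishes in $\mathbb{C}^{+}$ since $x_{n}(p)>0$). This lets me pass to logarithms safely. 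The key elementary estimate is that
\[
\left\vert \frac{z-ix_{n}(p)}{z+ix_{n}(p)}-1\right\vert
=\frac{2x_{n}(p)}{\left\vert z+ix_{n}(p)\right\vert}
\leq\frac{2x_{n}(p)}{\operatorname{Im}z+x_{n}(p)}
\leq\frac{2x_{n}(p)}{\delta}\leq\frac{2x_{n}}{\delta},
\]
where the last inequality uses exactly the hypothesis $x_{n}(p)\leq x_{n}$. Since $\{x_{n}\}\in l^{1}$, the right-hand side is a convergent series of $p$-independent bounds, which is the product analog of the Weierstrass $M$-test and already guarantees that each $B_{p}$ is a well-defined analytic function on $\mathbb{C}^{+}$ (this is the Blaschke condition \eqref{Blaschke cond} for the purely imaginary zeros $\{ix_{n}(p)\}$).

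Next I would establish the uniform convergence. For each fixed $z\in K$, the pointwise convergence $x_{n}(p)\to x_{n}$ gives $b_{n,p}(z):=\dfrac{z-ix_{n}(p)}{z+ix_{n}(p)}\to\dfrac{z-ix_{n}}{z+ix_{n}}=:b_{n}(z)$ for every $n$. To upgrade this to uniform convergence of the products, I would split the product at an index $N$ chosen so that $\sum_{n>N}x_{n}<\varepsilon$; by the $M$-test estimate above, both tails $\prod_{n>N}b_{n,p}$ and $\prod_{n>N}b_{n}$ are uniformly (in $p$ and in $z\in K$) close to $1$, with an error controlled by $\sum_{n>N}x_{n}$ independently of $p$. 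The finite head $\prod_{n\leq N}b_{n,p}(z)$ converges to $\prod_{n\leq N}b_{n}(z)$ uniformly on $K$ as $p\to\infty$, being a finite product of uniformly convergent factors each bounded on $K$. Combining the finite-head convergence with the uniform smallness of the tails in the standard $\varepsilon/3$ fashion yields $B_{p}\rightrightarrows B$ on $K$, and since $K$ was an arbitrary compact subset of $\mathbb{C}^{+}$, this is exactly the claimed convergence $B_{p}\underset{p\rightarrow\infty}{\rightrightarrows}B$ in $\mathbb{C}^{+}$.

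The main obstacle, and the only place where the hypotheses are genuinely used, is obtaining a single $p$-\emph{uniform} summable majorant for the factors; without the monotone bound $x_{n}(p)\leq x_{n}$ one could not dominate the tails simultaneously for all $p$, and the interchange of the limit $p\to\infty$ with the infinite product could fail. The rest is routine: the analyticity of the limit $B$ is automatic since a locally uniform limit of analytic functions is analytic, so no separate argument is needed there. I would also remark that convergence of the logarithms is equivalent to convergence of the products precisely because the factors are bounded away from $0$ on $K$, which is why isolating $\delta>0$ at the outset is the natural first move.
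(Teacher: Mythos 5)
Your proof is correct, but it takes a genuinely different route from the paper's. The paper compares $B_{p}$ to $B$ multiplicatively: using $\left\vert B\right\vert \leq1$ it reduces to bounding $\left\vert \prod_{n\geq1}b_{n}^{p}/b_{n}-1\right\vert$, takes the logarithm of the product of ratios, bounds the $n$-th term by a constant (depending on $K$) times $x_{n}-x_{n}(p)$, and concludes by dominated convergence for the series (their ``M-test''); since logarithms are taken, the paper must work on compacts avoiding the zero set $\cup_{p>0}\{ix_{n}(p)\}\cup\{ix_{n}\}$. You instead split the product additively into a finite head and a tail: the $p$-uniform majorant $\left\vert b_{n,p}-1\right\vert \leq 2x_{n}(p)/\delta\leq 2x_{n}/\delta$, together with $\left\vert b_{n,p}\right\vert \leq1$ and an estimate such as $\left\vert \prod u_{n}-1\right\vert \leq e^{\sum\left\vert u_{n}-1\right\vert }-1$, makes both tails uniformly close to $1$ independently of $p$, while the head converges uniformly on $K$ because $b_{n,p}(z)-b_{n}(z)=2iz\left(  x_{n}-x_{n}(p)\right)  /\bigl(  (z+ix_{n}(p))(z+ix_{n})\bigr)$; an $\varepsilon/3$ argument finishes. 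Both proofs extract exactly the same two ingredients from the hypotheses --- the bound $x_{n}(p)\leq x_{n}$ and $\{x_{n}\}\in l^{1}$ --- but your decomposition never actually uses logarithms, so it works verbatim on every compact $K\subset\mathbb{C}^{+}$, including those meeting the zeros, a small gain over the paper's version. One side remark of yours is inaccurate, though harmless: the factors are \emph{not} bounded away from $0$ on a general $K$ --- only the denominators are nonvanishing in $\mathbb{C}^{+}$, while the numerators vanish at $z=ix_{n}(p)\in\mathbb{C}^{+}$ --- so passing to logarithms would not in fact be ``safe'' near these zeros. Since your actual argument is logarithm-free, nothing breaks, but you should delete or correct that opening sentence and the closing remark about the equivalence of log-convergence and product-convergence, which needs the same caveat.
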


\begin{proof}
Setting $b_{n}^{p}\left(  z\right)  :=\frac{z-ix_{n}\left(  p\right)
}{z+ix_{n}\left(  p\right)  }$,$\ b_{n}\left(  z\right)  :=\frac{z-ix_{n}%
}{z+ix_{n}}$ we have for each $z$ away from $\left\{  ix_{n}\right\}  $
\[
\left\vert B_{p}-B\right\vert =\left\vert B\right\vert \ \left\vert
\prod\limits_{n\geq1}b_{n}^{p}/b_{n}-1\right\vert \leq\left\vert
\prod\limits_{n\geq1}b_{n}^{p}/b_{n}-1\right\vert .
\]
We are done if we show that on each compact set $K$ in $\mathbb{C}%
^{+}\diagdown\cup_{p>0}\left\{  ix_{n}\left(  p\right)  \right\}  $
\begin{align*}
f_{p}\left(  z\right)   &  :=\log\prod\limits_{n\geq1}b_{n}^{p}\left(
z\right)  /b_{n}\left(  x\right) \\
&  =\sum_{n\geq1}\log\left(  1+\frac{i\left(  x_{n}-x_{n}\left(  p\right)
\right)  }{z-ix_{n}}\right)  +\sum_{n\geq1}\log\left(  1+\frac{i\left(
x_{n}-x_{n}\left(  p\right)  \right)  }{z+ix_{n}\left(  p\right)  }\right) \\
&  \rightrightarrows0,\ \ \ p\rightarrow\infty.
\end{align*}
But uniformly on $K$
\begin{align*}
\left\vert f_{p}\left(  z\right)  \right\vert  &  \lesssim\sum_{n\geq1}\left(
\left\vert z-ix_{n}\right\vert ^{-1}+\left\vert z+ix_{n}\left(  p\right)
\right\vert ^{-1}\right)  \left(  x_{n}-x_{n}\left(  p\right)  \right) \\
&  \lesssim_{K}\sum_{n\geq1}\left(  x_{n}-x_{n}\left(  p\right)  \right)
\end{align*}
and the standard Weierstrass M-test applies.
\end{proof}

Finally, a set $S\subset\mathbb{C}^{+}$ is called a uniqueness set for $H^{2}$
if for any $f\in H^{2}$
\[
\left.  f\right\vert _{S}=0\Longrightarrow f=0.
\]
Otherwise $S$ is called a nonuniqueness set for $H^{2}$. Obviously, a
nonuniqueness set must satisfy the Blaschke condition. Thus, if a set $S$ is
uncountable then it is a uniqueness set and if $S$ is countable then it must
fail the Blaschke condition.

\subsection{Gevrey classes}

We will need the Gevrey classes $G^{\alpha},\alpha\geq1,$ of smooth functions
$f$ such that%

\[
\left\vert \partial_{x}^{n}f\left(  x\right)  \right\vert \lesssim_{f}%
Q_{f}^{n}\left(  n!\right)  ^{\alpha}\text{ for all }x\text{ and }n,
\]
with some $Q_{f}>0.$ Note that $G^{1}$ is the set of real analytic function.
Following \cite{Dyn76} we call $F\left(  x,y\right)  $ a pseudoanalytic
extension of $f\left(  x\right)  $ to $\mathbb{C}$ if
\[
F\left(  x,0\right)  =f\left(  x\right)  \text{ and }\overline{\partial
}F\left(  x,y\right)  \rightarrow0,y\rightarrow0,
\]
where $\overline{\partial}\overset{\operatorname*{def}}{=}\left(  1/2\right)
\left(  \partial_{x}+i\partial_{y}\right)  $ \ The statement $f\in G^{\alpha}$
is equivalent \cite{Dyn76} to the statement that $f$ admits a pseudoanalytic
extension $F$ such that for some $Q>0$%
\begin{equation}
\left\vert \overline{\partial}F\left(  x,y\right)  \right\vert \lesssim
_{f}\exp\left\{  -Q\left\vert y\right\vert ^{-\frac{1}{\alpha-1}}\right\}  .
\label{lambda_bar}%
\end{equation}

\section{Hankel Operators, basic definitions\label{hankel}}

A Hankel operator is an infinitely dimensional analog of a Hankel matrix, a
matrix whose $(j,k)$ entry depends only on $j+k$. I.e. a matrix $\Gamma$ of
the form
\[
\Gamma=\left(
\begin{array}
[c]{cccc}%
\gamma_{1} & \gamma_{2} & \gamma_{3} & ...\\
\gamma_{2} & \gamma_{3} & ... & \\
\gamma_{3} & ... &  & \\
... &  &  & \gamma_{n}%
\end{array}
\right)  .
\]
Definitions (and properties) of Hankel operators depend on specific spaces and
need not be equivalent. We consider Hankel operators on $H^{2}$ (c.f.
\cite{Nik2002}, \cite{Peller2003}).

Let%
\[
(\mathbb{J}f)(x)\overset{\operatorname*{def}}{=}f(-x)
\]
be the operator of reflection on $L^{2}$. It is clearly an isometry with the
obvious properties%
\begin{align}
\mathbb{J}^{\ast}  &  =\mathbb{J},\;\mathbb{J}^{2}=\mathbb{I},\;\mathbb{J}%
^{-1}=\mathbb{J}.\label{eq4.7}\\
\mathbb{J}\left(  \varphi f\right)   &  =(\mathbb{J}\varphi)\mathbb{J}%
f,\;\varphi\in L^{\infty},\;f\in L^{2}\label{eq4.8}\\
\mathbb{JP}_{\mp}  &  =\mathbb{P}_{\pm}\mathbb{J}. \label{eq4.9}%
\end{align}

\begin{definition}
[Hankel and Toeplitz operators]\label{def4.1}Let $\varphi\in L^{\infty}$. The
operators $\mathbb{H}(\varphi)$ and $\mathbb{T}(\varphi)$ defined by
\begin{equation}
\mathbb{H}(\varphi)f=\mathbb{JP}_{-}\varphi f,\;\text{and }\mathbb{T}%
(\varphi)f=\mathbb{P}_{+}\varphi f,\ \ f\in H^{2}, \label{eq4.1}%
\end{equation}
are called respectively the Hankel and Toeplitz operators with the symbol
$\varphi$.
\end{definition}

\bigskip Due to (\ref{eq4.9}), both $\mathbb{H}(\varphi)$ and $\mathbb{T}%
(\varphi)$ act from $H^{2}$ to $H^{2}$. Note that while $\mathbb{H}(\varphi)$
and $\mathbb{T}(\varphi)$ look alike, they are different parts of the
multiplication operator
\begin{equation}
\varphi f=\mathbb{JH}(\varphi)f+\mathbb{T}(\varphi)f,\ \;f\in H^{2},
\label{eq4.2}%
\end{equation}
and therefore are quite different. The Toeplitz operator will play only an
auxiliary role in our consideration.

Directly from the definition, $\Vert\mathbb{H}(\varphi)\Vert\leq\Vert
\varphi\Vert_{\infty}$ but much more subtle statements will be required.

\begin{theorem}
[Widom, 1960]\label{th4.4}Let $\varphi$ be unimodular. Then $\Vert
\mathbb{H}(\varphi)\Vert<1$ iff $\mathbb{T}(\varphi)$ is left invertible.
\end{theorem}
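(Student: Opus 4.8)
The plan is to reduce both conditions to a single energy identity for $\mathbb{H}(\varphi)$ and $\mathbb{T}(\varphi)$, exploiting that a unimodular symbol makes multiplication by $\varphi$ isometric on $L^{2}$. The starting point is the splitting (\ref{eq4.2}), which I rewrite using $\mathbb{J}\mathbb{H}(\varphi)f=\mathbb{J}^{2}\mathbb{P}_{-}\varphi f=\mathbb{P}_{-}\varphi f$ as the orthogonal decomposition
\begin{equation*}
\varphi f=\mathbb{P}_{+}\varphi f+\mathbb{P}_{-}\varphi f=\mathbb{T}(\varphi)f+\mathbb{J}\mathbb{H}(\varphi)f,\qquad f\in H^{2}.
\end{equation*}
Since $\mathbb{P}_{\pm}$ project onto the orthogonal summands of $L^{2}=H^{2}\oplus H_{-}^{2}$, the Pythagorean theorem gives $\|\varphi f\|^{2}=\|\mathbb{P}_{+}\varphi f\|^{2}+\|\mathbb{P}_{-}\varphi f\|^{2}$. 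Because $\mathbb{J}$ is an isometry by (\ref{eq4.7}), so that $\|\mathbb{H}(\varphi)f\|=\|\mathbb{J}\mathbb{P}_{-}\varphi f\|=\|\mathbb{P}_{-}\varphi f\|$, and because $|\varphi|=1$ a.e.\ forces $\|\varphi f\|=\|f\|$, I obtain the key identity
\begin{equation}\label{energy}
\|f\|^{2}=\|\mathbb{T}(\varphi)f\|^{2}+\|\mathbb{H}(\varphi)f\|^{2},\qquad f\in H^{2}.
\end{equation}

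Next I would record the standard fact that a bounded operator $\mathbb{T}$ on a Hilbert space is left invertible if and only if it is \emph{bounded below}, i.e.\ $\|\mathbb{T}f\|\geq\delta\|f\|$ for some $\delta>0$ and all $f$. Indeed, if $\mathbb{L}\mathbb{T}=\mathbb{I}$ then $\|f\|=\|\mathbb{L}\mathbb{T}f\|\leq\|\mathbb{L}\|\,\|\mathbb{T}f\|$ yields the lower bound with $\delta=\|\mathbb{L}\|^{-1}$; conversely a lower bound makes $\mathbb{T}$ injective with closed range, so its inverse on $\operatorname{ran}\mathbb{T}$ is bounded and $\mathbb{L}:=\mathbb{T}^{-1}\mathbb{P}$, with $\mathbb{P}$ the orthogonal projection onto $\operatorname{ran}\mathbb{T}$, is a bounded left inverse.

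With (\ref{energy}) in hand the equivalence is immediate. If $\|\mathbb{H}(\varphi)\|=c<1$, then $\|\mathbb{T}(\varphi)f\|^{2}=\|f\|^{2}-\|\mathbb{H}(\varphi)f\|^{2}\geq(1-c^{2})\|f\|^{2}$, so $\mathbb{T}(\varphi)$ is bounded below with $\delta=\sqrt{1-c^{2}}$ and hence left invertible. Conversely, if $\mathbb{T}(\varphi)$ is left invertible, hence bounded below by some $\delta>0$, then $\|\mathbb{H}(\varphi)f\|^{2}=\|f\|^{2}-\|\mathbb{T}(\varphi)f\|^{2}\leq(1-\delta^{2})\|f\|^{2}$, whence $\|\mathbb{H}(\varphi)\|\leq\sqrt{1-\delta^{2}}<1$.

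There is no genuinely hard step here: the whole content is packaged into the identity (\ref{energy}), and the only point requiring care is the equivalence of left invertibility with a lower bound, which rests on the closed-range argument above. The one structural hypothesis that must not be dropped is unimodularity; it is precisely $|\varphi|=1$ that upgrades the trivial inequality $\|\mathbb{T}(\varphi)f\|^{2}+\|\mathbb{H}(\varphi)f\|^{2}=\|\varphi f\|^{2}\leq\|\varphi\|_{\infty}^{2}\|f\|^{2}$ to the exact balance (\ref{energy}) on which both implications hinge.
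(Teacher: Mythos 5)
Your proof is correct, and every step is sound: the decomposition you start from is precisely (\ref{eq4.2}); the orthogonality of $H^{2}$ and $H_{-}^{2}$, the isometry of $\mathbb{J}$ from (\ref{eq4.7}), and $\left\vert \varphi\right\vert =1$ a.e.\ give the exact identity $\Vert f\Vert^{2}=\Vert\mathbb{T}(\varphi)f\Vert^{2}+\Vert\mathbb{H}(\varphi)f\Vert^{2}$ for $f\in H^{2}$; and your reduction of left invertibility to a lower bound is the standard Hilbert-space argument, correctly using that the closed range is complemented so that $\mathbb{L}=\mathbb{T}^{-1}\mathbb{P}$ is a bounded left inverse (in a general Banach space this step would fail, so it is genuinely a Hilbert-space point). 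The comparison with the paper is instructive, because the paper offers no proof at all: it cites Widom, remarks that one direction ``is a direct consequence of (\ref{eq4.2})'', and calls the other direction ``a really deep result'', deferring to \cite{BotSil06}, Sect.~2.20. Your energy identity shows that, for the statement exactly as formulated here --- with the Hankel \emph{norm} on one side --- both implications are elementary; this is in fact the standard textbook proof (see \cite{Peller2003}, \cite{Nik2002}). The depth the authors allude to lives in neighboring statements: if the criterion is phrased as $\operatorname{dist}_{L^{\infty}}(\varphi,H^{\infty})<1$, then identifying this distance with $\Vert\mathbb{H}(\varphi)\Vert$ requires the nontrivial half of Nehari's theorem, and Widom's actual two-sided invertibility criteria for $\mathbb{T}(\varphi)$ with unimodular symbol are deep; none of that is needed for left invertibility as stated. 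One minute point you could make explicit: the identity itself forces $\Vert\mathbb{T}(\varphi)f\Vert\leq\Vert f\Vert$, hence $\delta\leq1$, so $\sqrt{1-\delta^{2}}$ is real and the converse bound is meaningful.
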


The sufficiency in this theorem is a direct consequence of (\ref{eq4.2}) but
the necessity is a really deep result (see \cite{BotSil06}, sect. 2.20).

We will occasionally have to deal with certain unbounded symbols {(more
exactly, from BMO}) which nevertheless produce bounded Hankel operators. In
such cases we define $\mathbb{H}(\varphi)$ first on the set
\begin{equation}
\mathfrak{H}_{2}\overset{\operatorname*{def}}{=}\left\{  f\in H^{2}:f\in
C^{\infty},\ f\left(  z\right)  =o\left(  z^{-2}\right)  ,\ z\rightarrow
\infty,\ \operatorname{Im}z\geq0\right\}  ,\ \label{dense set in H}%
\end{equation}
dense \cite{Garnett} in $H^{2}$ by
\begin{equation}
\mathbb{H}(\varphi)f=\mathbb{JP}_{-}\varphi f,\ \ f\in\mathfrak{H}_{2},
\label{H on dense set}%
\end{equation}
and then extend (\ref{H on dense set})\ to the whole $H^{2}$ retaining the
same notation $\mathbb{H}(\varphi)$ for the extension.

Since obviously $\mathbb{H}(\varphi+h)=\mathbb{H}(\varphi)$, for any $h\in
H^{\infty}$, only the part of $\varphi$ analytic in $\mathbb{C}^{-}$ is
essential. We call (see (\ref{eq3.3}))
\begin{equation}
\Phi\overset{\operatorname*{def}}{=}\widetilde{\mathbb{P}}_{-}\varphi
\label{eq4.3}%
\end{equation}
the principal (co-analytic) part of the symbol $\varphi$. It need not be in
$L^{\infty}$ but due to (\ref{BMOA}) $\Phi\in\ $BMOA$\left(  \mathbb{C}%
^{-}\right)  $. The next statement allows us to define Hankel operators with
such symbols.

\begin{theorem}
\label{th4.5}Let $\varphi\in L^{\infty}$ and $h\in\ $BMOA$\left(
\mathbb{C}^{+}\right)  $. Then $\mathbb{H}(\varphi+h)$ is well-defined,
bounded and%
\begin{equation}
\mathbb{H}(\varphi+h)=\mathbb{H}(\varphi). \label{eq4.5'}%
\end{equation}
Consequently%
\begin{equation}
\mathbb{H}(\varphi)=\mathbb{H}(\Phi). \label{eq4.5}%
\end{equation}

\end{theorem}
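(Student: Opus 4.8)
The plan is to reduce everything to the elementary fact that $\mathbb{P}_{-}$ annihilates functions analytic in $\mathbb{C}^{+}$ with enough decay, to prove (\ref{eq4.5'}) first, and then to read off (\ref{eq4.5}) as an immediate corollary of (\ref{eq4.5'}). The engine of the whole argument is the single claim: for every $h\in\text{BMOA}(\mathbb{C}^{+})$ and every $f\in\mathfrak{H}_{2}$ one has $\mathbb{P}_{-}(hf)=0$.

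First I would establish this claim. The point is that $f\in\mathfrak{H}_{2}$ decays like $o(z^{-2})$ by (\ref{dense set in H}), whereas a BMOA symbol grows at most logarithmically. Since any $h\in\text{BMO}$ satisfies $\int|h(s)|(1+s^{2})^{-1}\,ds<\infty$, the decay exponent $2$ built into $\mathfrak{H}_{2}$ is precisely what forces $\int_{\mathbb{R}}|h(s)f(s)|\,ds<\infty$ (the integrand being locally integrable because $\text{BMO}\subset L^{1}_{\mathrm{loc}}$ while $f$ is smooth and bounded). As both $h$ and $f$ are analytic in $\mathbb{C}^{+}$, their product $hf$ is analytic there with integrable boundary values and decay at infinity, hence $hf\in H^{1}$ and is therefore annihilated by $\mathbb{P}_{-}$. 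Concretely, for $z_{0}\in\mathbb{C}^{-}$ one writes $(\mathbb{P}_{-}hf)(z_{0})$ as a Cauchy integral of $h(s)f(s)/(s-z_{0})$ and closes the contour in the upper half-plane, where the integrand is analytic and, on the semicircle of radius $R$, dominated by $R^{-3}\log R$ so that the arc contribution vanishes.

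Granting the claim, (\ref{eq4.5'}) follows on the dense set $\mathfrak{H}_{2}$: for $f\in\mathfrak{H}_{2}$, by (\ref{H on dense set}),
\[
\mathbb{H}(\varphi+h)f=\mathbb{JP}_{-}(\varphi+h)f=\mathbb{JP}_{-}\varphi f+\mathbb{JP}_{-}hf=\mathbb{H}(\varphi)f,
\]
since $\mathbb{P}_{-}hf=0$. Because $\varphi\in L^{\infty}$ makes $\mathbb{H}(\varphi)$ bounded with $\Vert\mathbb{H}(\varphi)\Vert\leq\Vert\varphi\Vert_{\infty}$, and $\mathfrak{H}_{2}$ is dense in $H^{2}$, the operator $\mathbb{H}(\varphi+h)$ initially defined on $\mathfrak{H}_{2}$ admits a unique bounded extension, which is exactly $\mathbb{H}(\varphi)$; this proves simultaneously that $\mathbb{H}(\varphi+h)$ is well-defined and bounded and that (\ref{eq4.5'}) holds. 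Finally, to deduce (\ref{eq4.5}) I would use (\ref{eq3.5}) to write $\varphi=\widetilde{\mathbb{P}}_{+}\varphi+\widetilde{\mathbb{P}}_{-}\varphi=\widetilde{\mathbb{P}}_{+}\varphi+\Phi$, note that (\ref{BMOA}) gives $\widetilde{\mathbb{P}}_{+}\varphi\in\text{BMOA}(\mathbb{C}^{+})$, and apply (\ref{eq4.5'}) with $h=-\widetilde{\mathbb{P}}_{+}\varphi$, obtaining $\mathbb{H}(\Phi)=\mathbb{H}(\varphi-\widetilde{\mathbb{P}}_{+}\varphi)=\mathbb{H}(\varphi)$.

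I expect the only genuine obstacle to be the core claim, namely verifying rigorously that the logarithmic growth of a BMOA symbol is beaten by the $o(z^{-2})$ decay of $\mathfrak{H}_{2}$ so that $hf\in H^{1}$ and hence $\mathbb{P}_{-}hf=0$. The matching of these two rates is exactly why $\mathfrak{H}_{2}$ is defined with decay exponent $2$ in (\ref{dense set in H}); once the $H^{1}$ membership is secured, the density argument and the deduction of (\ref{eq4.5}) are routine.
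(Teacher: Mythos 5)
Your proposal is correct and follows essentially the same route as the paper: the identical core claim $\mathbb{P}_{-}(hf)=0$ for $f\in\mathfrak{H}_{2}$ (driven by $h(x)/(1+x^{2})\in L^{1}$ for BMOA functions together with the $o(z^{-2})$ decay built into $\mathfrak{H}_{2}$), the same density extension to all of $H^{2}$, and the identical deduction of (\ref{eq4.5}) from (\ref{eq3.5}) and (\ref{BMOA}) with $h=-\widetilde{\mathbb{P}}_{+}\varphi$. The only immaterial difference is that you place $hf$ in $H^{1}$ via a contour estimate, whereas the paper simply notes $hf\in H^{2}$ (which also holds, by John--Nirenberg); either membership yields $\mathbb{P}_{-}hf=0$.
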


\begin{proof}
As well-known \cite{Garnett} every BMOA\ function $h$ is subject to $h\left(
x\right)  /\left(  1+x^{2}\right)  \in L^{1}$ and one can easily see that
$hf\in H^{2}$ if $f\in\mathfrak{H}_{2}$. Hence $\mathbb{P}_{-}hf=0$ and
(\ref{eq4.5'}) holds on the set $\mathfrak{H}_{2}$. Therefore (\ref{eq4.5'})
can be closed to the whole $H^{2}$ and $\mathbb{H}(\varphi+h)$ is well-defined
in the sense discussed above, bounded and (\ref{eq4.5'}) holds. By
(\ref{eq3.5}) $\varphi=\widetilde{\mathbb{P}}_{-}\varphi+\widetilde{\mathbb{P}%
}_{+}\varphi$ and (\ref{eq4.5}) follows from (\ref{eq4.5'}) with
$h=-\widetilde{\mathbb{P}}_{+}\varphi$ which, by (\ref{BMOA}), is in
BMOA$\left(  \mathbb{C}^{+}\right)  $.
\end{proof}

{If $\varphi\in$ BMO, then there exist $\varphi_{1},\varphi_{2}\in L^{\infty}$
such that
\begin{equation}
\varphi=\varphi_{1}+\widetilde{\mathbb{P}}\varphi_{2} \label{eq4.11.1}%
\end{equation}
and Theorem \ref{th4.5} immediately implies}

\begin{proposition}
\label{prop4.4.1} Let $\varphi\in$ BMO, then $\mathbb{H}(\varphi)$ is
well-defined and bounded.
\end{proposition}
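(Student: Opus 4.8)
The plan is to read the decomposition \eqref{eq4.11.1} as the structural heart of the matter and then simply feed it into Theorem~\ref{th4.5}. First I would establish (or invoke, if it is taken as standard background) the claim embedded in the displayed equation preceding the proposition: any $\varphi \in \text{BMO}$ can be written as $\varphi = \varphi_1 + \widetilde{\mathbb{P}}\varphi_2$ with $\varphi_1, \varphi_2 \in L^\infty$. This is the Fefferman--Stein characterization of $\text{BMO}$ as the sum of a bounded function and the (regularized) Riesz transform of a bounded function; in the present half-plane/Hardy-space formulation it says precisely that every $\text{BMO}$ function is a bounded function plus $\widetilde{\mathbb{P}} = \widetilde{\mathbb{P}}_+$ applied to a bounded function. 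I would state this as the one external ingredient and cite \cite{Garnett} for it, consistent with how the paper treats its Hardy-space facts.

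Granting that decomposition, the proof is a short two-line deduction. Writing $\varphi = \varphi_1 + \widetilde{\mathbb{P}}\varphi_2$ with $\varphi_1,\varphi_2 \in L^\infty$, I would set $h := \widetilde{\mathbb{P}}_+ \varphi_2$. By \eqref{BMOA} we have $h \in \text{BMOA}(\mathbb{C}^+)$, and by \eqref{eq3.5} the full symbol decomposes as
\[
\varphi = \varphi_1 + \widetilde{\mathbb{P}}_+\varphi_2 = \bigl(\varphi_1 + \widetilde{\mathbb{P}}_-\varphi_2\bigr) + h.
\]
Here $\widetilde{\mathbb{P}}_-\varphi_2 \in \text{BMOA}(\mathbb{C}^-)$, again by \eqref{BMOA}, so the bracketed term is a bounded function plus a $\text{BMOA}(\mathbb{C}^-)$ function; in fact the cleanest packaging is to note that $\varphi - h = \varphi_1 + \widetilde{\mathbb{P}}_-\varphi_2$ already has the form ``$L^\infty$ symbol plus co-analytic $\text{BMOA}$ part'' that Theorem~\ref{th4.5} is built to handle. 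Applying Theorem~\ref{th4.5} to $\varphi_1 \in L^\infty$ with the $\text{BMOA}(\mathbb{C}^+)$ correction $h$ gives that $\mathbb{H}(\varphi_1 + h)$ is well-defined and bounded, and equals $\mathbb{H}(\varphi_1)$; the residual $\widetilde{\mathbb{P}}_-\varphi_2$ is absorbed by the same theorem since it only contributes to the principal co-analytic part. I would organize the bookkeeping so that a single application of \eqref{eq4.5'} (with the genuinely analytic piece $h$ stripped off) yields a bounded operator, matching the phrasing of \eqref{eq4.5}.

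The only real subtlety — and the step I would treat most carefully — is verifying that the construction \eqref{H on dense set}--\eqref{eq4.3} of $\mathbb{H}(\varphi)$ via the dense set $\mathfrak{H}_2$ is genuinely consistent across the decomposition, i.e. that the extension does not depend on how one splits $\varphi$ into an $L^\infty$ part and a $\widetilde{\mathbb{P}}$ part. The mechanism licensing this is exactly the computation already carried out in the proof of Theorem~\ref{th4.5}: for $f \in \mathfrak{H}_2$ and any $g \in \text{BMOA}$ one has $gf \in H^2$, hence $\mathbb{P}_- g f = 0$, so all $\text{BMOA}(\mathbb{C}^+)$ and (after the reflection built into $\mathbb{J}$) the analytic ambiguities act trivially on the dense set. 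Thus the definition on $\mathfrak{H}_2$ is unambiguous and the bounded extension is unique. I would therefore phrase the proof as: invoke the $\text{BMO}$ decomposition \eqref{eq4.11.1}, observe via \eqref{eq3.5} and \eqref{BMOA} that $\varphi$ differs from an $L^\infty$ function by a $\text{BMOA}(\mathbb{C}^+)$ function modulo an irrelevant co-analytic $\text{BMOA}$ piece, and conclude by Theorem~\ref{th4.5}. The anticipated obstacle is not any hard estimate but rather making this well-definedness argument airtight without re-deriving what Theorem~\ref{th4.5} already supplies.
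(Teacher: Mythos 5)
Your proof takes exactly the paper's route: the paper's entire argument is the Fefferman--Stein decomposition \eqref{eq4.11.1} followed by the observation that Theorem~\ref{th4.5} immediately implies the claim, which is precisely what you do (and your extra care about well-definedness on $\mathfrak{H}_{2}$ is already contained in the proof of Theorem~\ref{th4.5}). One cosmetic slip worth fixing: your intermediate display $\varphi_{1}+\widetilde{\mathbb{P}}_{+}\varphi_{2}=\bigl(\varphi_{1}+\widetilde{\mathbb{P}}_{-}\varphi_{2}\bigr)+h$ with $h=\widetilde{\mathbb{P}}_{+}\varphi_{2}$ is false, since by \eqref{eq3.5} the right-hand side equals $\varphi_{1}+\varphi_{2}$; it is also unnecessary, because Theorem~\ref{th4.5} applies directly to $\varphi=\varphi_{1}+h$ with $\varphi_{1}\in L^{\infty}$ and $h=\widetilde{\mathbb{P}}_{+}\varphi_{2}\in\text{BMOA}(\mathbb{C}^{+})$ by \eqref{BMOA}.
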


The final statement of this section trivially follows from (\ref{eq4.7}%
)-(\ref{eq4.9}).

\begin{proposition}
\label{prop4.7} $\mathbb{H}(\varphi)$ is selfadjoint if $\mathbb{J}%
\varphi=\bar{\varphi}$$\;\not .  \;$
\end{proposition}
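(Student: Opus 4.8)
The plan is to compute the sesquilinear form $\langle\mathbb{H}(\varphi)f,g\rangle$ for $f,g\in H^{2}$, slide the operators $\mathbb{J}$ and $\mathbb{P}_{-}$ off of $f$ and onto $g$ using only the listed identities (\ref{eq4.7})--(\ref{eq4.9}), and thereby read off the symbol of the adjoint. Since $\mathbb{J}$ and $\mathbb{P}_{-}$ are selfadjoint,
\[
\langle\mathbb{H}(\varphi)f,g\rangle=\langle\mathbb{J}\mathbb{P}_{-}\varphi f,g\rangle=\langle\mathbb{P}_{-}\varphi f,\mathbb{J}g\rangle=\langle\varphi f,\mathbb{P}_{-}\mathbb{J}g\rangle .
\]
The key observation is that for $g\in H^{2}$ one has $\mathbb{J}g=\mathbb{J}\mathbb{P}_{+}g=\mathbb{P}_{-}\mathbb{J}g$ by (\ref{eq4.9}), so $\mathbb{J}g\in H_{-}^{2}$ and the projection may be dropped. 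This reduces the form to the clean expression $\langle\mathbb{H}(\varphi)f,g\rangle=\langle\varphi f,\mathbb{J}g\rangle$.

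Next I would identify the adjoint symbol in general by running the analogous manipulations on $\langle f,\mathbb{H}(\mathbb{J}\bar\varphi)g\rangle$. The same projection bookkeeping gives $\langle f,\mathbb{H}(\mathbb{J}\bar\varphi)g\rangle=\langle\mathbb{J}f,(\mathbb{J}\bar\varphi)g\rangle$, and then the multiplicativity $\mathbb{J}(\varphi f)=(\mathbb{J}\varphi)(\mathbb{J}f)$ of (\ref{eq4.8}), together with $\overline{\mathbb{J}\bar\varphi}=\mathbb{J}\varphi$ and the isometry property of $\mathbb{J}$, turns $\langle\mathbb{J}f,(\mathbb{J}\bar\varphi)g\rangle$ back into $\langle\varphi f,\mathbb{J}g\rangle$. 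Since both expressions collapse to the same quantity, this establishes the general adjoint rule
\[
\mathbb{H}(\varphi)^{\ast}=\mathbb{H}(\mathbb{J}\bar\varphi),
\]
valid for every admissible symbol $\varphi$.

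Finally I would invoke the hypothesis. The condition $\mathbb{J}\varphi=\bar\varphi$ reads $\varphi(-x)=\overline{\varphi(x)}$, which is equivalent to $\mathbb{J}\bar\varphi=\varphi$; substituting into the adjoint rule yields $\mathbb{H}(\varphi)^{\ast}=\mathbb{H}(\varphi)$, i.e.\ selfadjointness.

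I do not anticipate a genuine obstacle here---the paper itself flags the statement as trivially following from (\ref{eq4.7})--(\ref{eq4.9})---so the task is purely bookkeeping with the reflection and projection identities. The only point deserving a touch of care is that, when $\varphi$ is an unbounded (BMO) symbol as permitted by Proposition \ref{prop4.4.1}, the above manipulations should first be performed on the dense set $\mathfrak{H}_{2}$ of (\ref{dense set in H}), where $\varphi f\in L^{2}$ causes no trouble, and then closed to all of $H^{2}$.
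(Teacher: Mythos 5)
Your proof is correct and is precisely the argument the paper intends: the paper gives no written proof, noting only that the statement ``trivially follows from (\ref{eq4.7})--(\ref{eq4.9})'', and your computation — sliding $\mathbb{J}$ and $\mathbb{P}_{-}$ across the inner product to obtain $\langle\mathbb{H}(\varphi)f,g\rangle=\langle\varphi f,\mathbb{J}g\rangle$ and hence the adjoint rule $\mathbb{H}(\varphi)^{\ast}=\mathbb{H}(\mathbb{J}\bar{\varphi})$ — is exactly that bookkeeping written out in full. Your closing remark about first working on the dense set $\mathfrak{H}_{2}$ for BMO symbols and then closing to $H^{2}$ is a correct and worthwhile refinement consistent with the paper's own treatment of unbounded symbols.
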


In the context of integral operators the Hankel operator is usually defined as
an integral operator on $L^{2}(\mathbb{R}_{+})$ whose kernel depends on the
sum of the arguments%
\begin{equation}
(\mathbb{H}f)(x)=\int_{0}^{\infty}h(x+y)f(y)dy,\;f\in L^{2}(\mathbb{R}%
_{+}),\;x\geq0 \label{eq4.10}%
\end{equation}
and it is this form that Hankel operators typically appear in the inverse
scattering formalism. One can show that the Hankel operator $\mathbb{H}$
defined by (\ref{eq4.10}) is unitary equivalent to $\mathbb{H}(\varphi)$ with
the symbol $\varphi$ equal to the Fourier transform of $h$. We emphasize
though that the form (\ref{eq4.10}) does not prove to be convenient for our
purposes and also $h$ is in general not a function but a distribution.

Finally, we also note that $\mathbb{H}(\varphi)$ is unitary equivalent to the
operator $\chi\mathbb{F}^{-1}\varphi\mathbb{F}^{-1}$ on $L^{2}(\mathbb{R}%
_{+})$. Here $\mathbb{F}$ is the Fourier transform. However our previous
experience suggests that this realization of the Hankel operator has some
technical disadvantages to (\ref{eq4.1}).

\section{Hankel operators and the Sarason algebra $H^{\infty}+C$%
\label{H^infty+C}}

The set $H^{\infty}+C$ is one of the most common function classes in the
theory of Hankel and Toeplitz operators. By definition%

\[
H^{\infty}+C\overset{\operatorname*{def}}{=}\{f:f=h+g,\;h\in H^{\infty},\;g\in
C\}.
\]

\begin{theorem}
[Sarason, 1967]\label{sarason thm}$H^{\infty}+C$ is a closed sub-algebra of
$L^{\infty}$.
\end{theorem}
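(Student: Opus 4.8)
The plan is to prove the two assertions separately, treating \emph{closedness} as the analytic heart of the matter and then deducing the \emph{algebra} property from it by a soft density argument. Throughout it is convenient to work with the elementary inner factor $b(z):=\dfrac{z-i}{z+i}$, which lies in $H^{\infty}\cap C$, is unimodular on $\mathbb{R}$, and satisfies $b(\pm\infty)=1$, so that $\overline{b}=b^{-1}\in C$ as well. Via the Cayley identification $\mathbb{R}\cup\{\infty\}\cong\mathbb{T}$, $x\mapsto b(x)$, the finite linear combinations of $\{b^{n}:n\in\mathbb{Z}\}$ are sup--norm dense in $C$, and the Ces\`aro (Fej\'er) means $\sigma_{N}$ associated with this system are contractions of $L^{\infty}$ that converge uniformly to the identity on $C$ and map $H^{\infty}$ into $\mathcal{A}:=H^{\infty}\cap C$ (the disc-algebra analog), since they truncate and average the nonnegative-index part. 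These are the only external facts I would invoke.

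First I would prove that $H^{\infty}+C$ is closed. The reduction is functional-analytic: with $q\colon L^{\infty}\to L^{\infty}/H^{\infty}$ the quotient map, one has $q^{-1}\bigl(q(C)\bigr)=C+H^{\infty}$, so $C+H^{\infty}$ is closed as soon as $q(C)$ is closed in $L^{\infty}/H^{\infty}$. The induced injection $C/(C\cap H^{\infty})\to L^{\infty}/H^{\infty}$ has image $q(C)$, and it is isometric — hence has complete, therefore closed, range — precisely when
\[
\operatorname{dist}_{L^{\infty}}(g,H^{\infty})=\operatorname{dist}_{L^{\infty}}(g,\mathcal{A})\qquad\text{for all }g\in C .
\]
The inequality $\le$ is trivial since $\mathcal{A}\subseteq H^{\infty}$, so the whole burden is the reverse inequality, i.e. that a continuous function can be approximated in $H^{\infty}$ \emph{by a continuous} function essentially as well as by an arbitrary bounded analytic one. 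This I would obtain from Fej\'er means: given $\varepsilon>0$, pick $h\in H^{\infty}$ with $\|g-h\|_{\infty}\le\operatorname{dist}(g,H^{\infty})+\varepsilon$, and write $g-\sigma_{N}h=(g-\sigma_{N}g)+\sigma_{N}(g-h)$. Since $\sigma_{N}h\in\mathcal{A}$, $\|\sigma_{N}(g-h)\|_{\infty}\le\|g-h\|_{\infty}$, and $\|g-\sigma_{N}g\|_{\infty}\to0$, letting $N\to\infty$ and then $\varepsilon\to0$ yields $\operatorname{dist}(g,\mathcal{A})\le\operatorname{dist}(g,H^{\infty})$, as required.

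With closedness in hand, the algebra property is short. Expanding a product bilinearly, the only nontrivial term is $H^{\infty}\cdot C$, so it suffices to show $hg\in H^{\infty}+C$ for $h\in H^{\infty}$, $g\in C$. Approximating $g$ uniformly by combinations of $\{b^{n}\}$ and using that multiplication by the fixed bounded $h$ is $L^{\infty}$-continuous together with the now-established closedness, it is enough to check $h\,b^{n}\in H^{\infty}+C$ for every $n\in\mathbb{Z}$. For $n\ge0$ this lies in $H^{\infty}$ outright, and for negative powers I would argue by induction on $|n|$ from the base case
\[
h\,\overline{b}=\frac{h}{b}=\frac{h-h(i)}{b}+h(i)\,\overline{b},
\]
where $\frac{h-h(i)}{b}\in H^{\infty}$ because $h-h(i)$ vanishes at the single zero $i$ of the inner function $b$ (division by a dividing inner factor preserves $H^{\infty}$, with boundary modulus unchanged), while $\overline{b}\in C$; the inductive step multiplies a representation $\overline{b}^{\,m}h=h'+g'$ by $\overline{b}$ and reuses the base case on $\overline{b}h'$ together with $\overline{b}g'\in C$.

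The main obstacle is the displayed distance identity. In general the algebraic sum of two closed subspaces of a Banach space need not be closed, and the special feature that rescues $H^{\infty}+C$ is exactly that continuous best approximants exist up to $\varepsilon$; the Fej\'er-mean smoothing is what converts a bounded analytic near-minimizer into a continuous one without inflating the error. I expect the remaining steps — the Cayley transfer of the summability facts, the inner-factor division, and the quotient-space bookkeeping — to be routine once this estimate is secured.
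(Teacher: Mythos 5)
Your proposal is correct, and a direct comparison is moot here: the paper quotes Sarason's theorem as a classical fact with no proof of its own (the standard monographs \cite{BotSil06}, \cite{Nik2002}, \cite{Peller2003} are its references), so the benchmark is the literature. What you wrote is essentially the classical proof going back to Sarason, correctly transplanted from the circle to the line; the one point worth making explicit is that the Cayley map $x\mapsto b(x)=\frac{x-i}{x+i}$ identifies the paper's class $C$ (continuity on $\mathbb{R}$ with equal finite limits at $\pm\infty$) isometrically with $C(\mathbb{T})$ and $H^{\infty}(\mathbb{C}^{+})$ with $H^{\infty}(\mathbb{D})$, which is what legitimizes both the Fej\'er-mean facts and the distance bookkeeping. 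Your two pillars are sound: the quotient-space reduction ($q^{-1}(q(C))=H^{\infty}+C$, with $q(C)$ closed because $C/(C\cap H^{\infty})\to L^{\infty}/H^{\infty}$ is isometric once $\operatorname{dist}_{L^{\infty}}(g,H^{\infty})=\operatorname{dist}_{L^{\infty}}(g,H^{\infty}\cap C)$ for $g\in C$), and the estimate $g-\sigma_{N}h=(g-\sigma_{N}g)+\sigma_{N}(g-h)$, which is exactly the classical trick; your induction for $\bar{b}^{\,n}h$ is the standard division-by-inner-factor argument in disguise (it can be written non-inductively by subtracting from $h$ its degree-$(n-1)$ ``Taylor section'' in powers of $b$ at $i$ and dividing by $b^{n}$, since the boundary modulus is unchanged under division by a dividing Blaschke factor). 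Given the theme of this paper, note also an operator-theoretic shortcut to the closedness half using only tools the paper already states: if $\varphi_{n}\in H^{\infty}+C$ and $\varphi_{n}\to\varphi$ in $L^{\infty}$, then $\Vert\mathbb{H}(\varphi_{n})-\mathbb{H}(\varphi)\Vert\leq\Vert\varphi_{n}-\varphi\Vert_{\infty}\to0$, each $\mathbb{H}(\varphi_{n})$ is compact by Hartman's theorem (Theorem \ref{thHart}), hence $\mathbb{H}(\varphi)$ is compact and $\varphi\in H^{\infty}+C$ by the converse half of the same theorem --- though Hartman's theorem is itself usually proved via your distance identity, so your route is the self-contained one, and it buys the quantitative fact $\operatorname{dist}(g,H^{\infty})=\operatorname{dist}(g,H^{\infty}\cap C)$, which is stronger than closedness alone.
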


The importance of $H^{\infty}+C$ in the context of Hankel operators is due to
the following fundamental theorem.

\begin{theorem}
[Hartman, 1958]\label{thHart}Let $\varphi\in L^{\infty}$. Then $\mathbb{H}%
(\varphi)$ is compact iff $\varphi\in H^{\infty}+C$. I.e. $\mathbb{H}%
(\varphi)$ is compact iff $\mathbb{H}(\varphi)=\mathbb{H}(g)$ with some $g\in
C$.
\end{theorem}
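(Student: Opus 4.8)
The plan is to prove both directions by approximation, using the already-recorded norm bound $\Vert\mathbb{H}(\varphi)\Vert\le\Vert\varphi\Vert_\infty$ together with Theorem \ref{th4.5} (which gives $\mathbb{H}(\varphi)=\mathbb{H}(\Phi)$ and, more importantly, $\mathbb{H}(\varphi+h)=\mathbb{H}(\varphi)$ for $h\in H^\infty$). First I would dispose of the \emph{easy} implication: suppose $\varphi\in H^\infty+C$, say $\varphi=h+g$ with $h\in H^\infty$, $g\in C$. By Theorem \ref{th4.5} we have $\mathbb{H}(\varphi)=\mathbb{H}(g)$, so it suffices to show $\mathbb{H}(g)$ is compact for $g\in C$. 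Since $C$ (continuous functions with equal limits at $\pm\infty$) is the uniform closure of nice symbols whose Hankel operators are plainly compact, I would approximate $g$ uniformly by symbols $g_n$ for which $\mathbb{H}(g_n)$ is of finite rank or otherwise manifestly compact; then $\Vert\mathbb{H}(g)-\mathbb{H}(g_n)\Vert=\Vert\mathbb{H}(g-g_n)\Vert\le\Vert g-g_n\Vert_\infty\to0$, and compactness passes to the norm limit. Concretely, one can use that the rational functions with poles off $\mathbb{R}$ that vanish at $\infty$ are dense in (the relevant part of) $C$, and for such a rational symbol the co-analytic part $\Phi=\widetilde{\mathbb{P}}_-\varphi$ is a finite sum of terms $(\,\cdot-\lambda)^{-1}$ with $\lambda\in\mathbb{C}^+$, whose Hankel operators are rank one by the computation in \eqref{P_}.

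For the \emph{converse} — the deep direction — I would assume $\mathbb{H}(\varphi)$ is compact and show $\operatorname{dist}_{L^\infty}(\varphi,H^\infty+C)=0$. The natural device is the one-parameter family of shift-type operators that the Hankel structure interacts with: on the $\mathbb{R}$-realization one translates the symbol, equivalently one multiplies by the inner functions $e_\lambda(k)=e^{i\lambda k}$ (or lets a Blaschke/inner factor run to the boundary) and examines $\Vert\mathbb{H}(\bar e_\lambda\varphi)-\mathbb{H}(\varphi)\Vert$ or the Hankel operators $\mathbb{H}(e_\lambda\varphi)$. The key algebraic fact is the commutation/factorization identity relating $\mathbb{H}(u\varphi)$ to $\mathbb{H}(\varphi)$ composed with the (co-)Toeplitz operator of an inner function $u$, so that compactness of $\mathbb{H}(\varphi)$ forces $\Vert\mathbb{H}(\varphi)-\mathbb{H}(\varphi_u)\Vert\to0$ as $u$ exhausts the inner functions, where $\varphi_u$ is a truncation of $\varphi$ whose co-analytic part lies in a finite-dimensional model space. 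One then reads off that the principal part $\Phi$ can be approximated in $L^\infty$ modulo $H^\infty$ by bounded continuous functions, which is exactly the assertion $\varphi\in H^\infty+C$ (using Sarason's Theorem \ref{sarason thm} that $H^\infty+C$ is closed, so the limit stays inside the algebra).

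The main obstacle is precisely this converse: there is no elementary estimate delivering it, and it rests on the interplay between compactness and the essential norm of Hankel operators. The cleanest route is to quote the essential-norm identity $\Vert\mathbb{H}(\varphi)\Vert_{\mathrm{ess}}=\operatorname{dist}_{L^\infty}(\varphi,H^\infty+C)$ (a consequence of the Adamyan--Arov--Krein circle of results that the paper will invoke later), from which both directions follow at once: $\mathbb{H}(\varphi)$ compact means $\Vert\mathbb{H}(\varphi)\Vert_{\mathrm{ess}}=0$, i.e.\ $\operatorname{dist}_{L^\infty}(\varphi,H^\infty+C)=0$, and closedness of the algebra gives $\varphi\in H^\infty+C$; conversely distance zero forces the essential norm to vanish, hence compactness. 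Given the self-contained spirit of the paper, I would state the essential-norm formula explicitly and either prove it or attribute it, since reproving Hartman from scratch without it would require building the full model-space machinery; the honest plan is therefore to derive Theorem \ref{thHart} as a corollary of that essential-norm identity, flagging the identity itself as the genuinely nontrivial input.
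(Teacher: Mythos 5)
The paper offers no proof of Theorem \ref{thHart} at all: it is quoted as a classical result of Hartman (1958), with the general references \cite{Nik2002}, \cite{Peller2003}, \cite{BotSil06} standing behind it, so there is no internal argument to compare against. Your proposal is a correct rendition of the standard textbook route. The easy direction is complete as sketched: Theorem \ref{th4.5} reduces $\varphi=h+g$ to the symbol $g\in C$, a rational symbol has finite-rank Hankel operator by the computation (\ref{P_}), Stone--Weierstrass gives density of such symbols in $C$, and the bound $\Vert\mathbb{H}(g-g_{n})\Vert\leq\Vert g-g_{n}\Vert_{\infty}$ passes compactness to the norm limit. For the converse, your middle paragraph (translations/inner functions) is too vague to stand on its own; note in particular that to convert decay of the operator norms $\Vert\mathbb{H}(e_{\lambda}\varphi)\Vert$ into the symbol statement $\operatorname{dist}_{L^{\infty}}(e_{\lambda}\varphi,H^{\infty})\rightarrow0$ you need Nehari's theorem $\Vert\mathbb{H}(\varphi)\Vert=\operatorname{dist}_{L^{\infty}}(\varphi,H^{\infty})$, which you never invoke --- the definition only yields the inequality $\leq$. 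But your declared fallback is sound and non-circular: the essential-norm identity $\Vert\mathbb{H}(\varphi)\Vert_{\mathrm{ess}}=\operatorname{dist}_{L^{\infty}}(\varphi,H^{\infty}+C)$ has its easy half ($\leq$) exactly from the direction you already proved, and its deep half follows from the paper's own Theorem \ref{AAK} upon letting $n\rightarrow\infty$, using that $s_{n}(\mathbb{A})\rightarrow\Vert\mathbb{A}\Vert_{\mathrm{ess}}$ for any bounded operator and that the $L^{\infty}$-closure of $\bigcup_{n}\left(  \mathcal{R}_{n}+H^{\infty}\right)  $ is precisely $H^{\infty}+C$ (the same density fact as in your easy direction), Sarason's Theorem \ref{sarason thm} then keeping the limit symbol inside the algebra. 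So your plan derives Hartman from the Adamyan--Arov--Krein theorem, whereas the paper simply cites it as background; both are legitimate, and your version has the merit of making explicit the logical dependence on a result (Theorem \ref{AAK}) that the paper invokes later anyway.
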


For Hankel operators appearing in completely integrable systems the membership
of the symbol in $H^{\infty}+C$ is far from being obvious. This may be part of
the reason why the powerful machinery of Hankel operators has not made it to
solution theory. The following statement will be crucial to our approach.

\begin{theorem}
[Grudsky, 2001]\label{thGru01}Let $p(x)$ be a real polynomial with a positive
leading coefficient such that
\begin{equation}
p(-x)=-p(x). \label{eq5.1}%
\end{equation}
Then
\begin{equation}
e^{ip}\in H^{\infty}+C. \label{eq5.2}%
\end{equation}
Moreover, there exist an infinite Blaschke product $B$ and a unimodular
function $u\in C$ such that
\begin{equation}
e^{ip}=Bu. \label{eq5.3}%
\end{equation}

\end{theorem}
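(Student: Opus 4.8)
The plan is to establish the factorization (\ref{eq5.3}) first, since the membership (\ref{eq5.2}) then comes for free: $B\in H^{\infty}\subset H^{\infty}+C$ and $u\in C\subset H^{\infty}+C$, and by Sarason's Theorem \ref{sarason thm} the set $H^{\infty}+C$ is an algebra, so the product $Bu=e^{ip}$ again lies in $H^{\infty}+C$. Thus the entire content is the construction of an inner function $B$ (a Blaschke product) whose boundary argument tracks the phase $p$, together with the verification that the leftover factor $u:=e^{ip}\overline{B}$ is continuous on $\mathbb{R}$ with equal limits at $\pm\infty$.

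For the construction, write $\deg p=2m+1$ and let $a>0$ be the leading coefficient, so that $p$ is odd, $p(x)\to\pm\infty$ as $x\to\pm\infty$, and $p'(x)\sim(2m+1)a\,x^{2m}>0$ once $|x|\ge R$, say. The guiding observation is that a single half-plane Blaschke factor $\frac{x-z_{n}}{x-\overline{z_{n}}}$ with $z_{n}=\alpha_{n}+i\beta_{n}$, $\beta_{n}>0$, is unimodular on $\mathbb{R}$ and has $\frac{d}{dx}\arg\frac{x-z_{n}}{x-\overline{z_{n}}}=\frac{2\beta_{n}}{(x-\alpha_{n})^{2}+\beta_{n}^{2}}$, i.e.\ $2\pi$ times a Poisson bump centred at $\alpha_{n}$ of width $\beta_{n}$ (total argument increase $2\pi$). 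I therefore place zeros so that these bumps reproduce the density $p'/2\pi$: on $[R,\infty)$ choose $\alpha_{n}$ by $p(\alpha_{n})=p(R)+2\pi n$, set $\beta_{n}=\alpha_{n+1}-\alpha_{n}$, and add the mirror zeros $-\overline{z_{n}}=-\alpha_{n}+i\beta_{n}$ to respect the oddness $p(-x)=-p(x)$. Define $B$ as the product of all the resulting factors. From $p(\alpha_{n})\sim a\,\alpha_{n}^{2m+1}=p(R)+2\pi n$ one gets $\alpha_{n}\sim n^{1/(2m+1)}$ and $\beta_{n}\sim n^{-2m/(2m+1)}$, whence $\frac{\operatorname{Im}z_{n}}{1+|z_{n}|^{2}}\sim n^{-(2m+2)/(2m+1)}$; since the exponent $(2m+2)/(2m+1)>1$, the Blaschke condition (\ref{Blaschke cond}) holds and $B$ is a genuine infinite Blaschke product, so $B\in H^{\infty}$ and $|B|=1$ a.e.\ on $\mathbb{R}$. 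As the zeros tend to $\infty$ with $\beta_{n}\to0$ and have no finite accumulation point, $B$ is continuous and unimodular at every finite real point, so $u=e^{ip}\overline{B}$ is continuous on $\mathbb{R}$.

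It remains to show $u\in C$, i.e.\ that $\psi:=p-\arg B$ has limits at $\pm\infty$ which coincide. Differentiating $\arg B(x)=\sum_{n}\arg\frac{x-z_{n}}{x-\overline{z_{n}}}$ (two-sided sum over the right and mirrored families) I would compare $(\arg B)'(x)=\sum_{n}\frac{2\beta_{n}}{(x-\alpha_{n})^{2}+\beta_{n}^{2}}$ with $p'(x)$: the normalization $p(\alpha_{n+1})-p(\alpha_{n})=2\pi$ makes the smoothed unit masses quantize the measure $p'\,dx$, and since $p'$ varies slowly on the scale $\beta_{n}\sim 2\pi/p'(\alpha_{n})$ of each bump as $\alpha_{n}\to\infty$, the error $\psi'=p'-(\arg B)'$ is integrable near $\pm\infty$, giving $\psi(x)\to L_{\pm}$. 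By the symmetric placement $\arg B$ is an odd function of $x$ (each mirror pair $\{z_{n},-\overline{z_{n}}\}$ contributes an odd phase), so $\psi$ is odd and $L_{-}=-L_{+}$; the remaining freedom in the offset $p(R)$ (equivalently, the base point of the $\alpha_{n}$) can be tuned so that $L_{+}\in\pi\mathbb{Z}$, i.e.\ $e^{iL_{+}}=e^{-iL_{+}}$, whereupon $u$ has equal limits at $\pm\infty$ and $u\in C$.

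The main obstacle is exactly this slowly-varying-density estimate: showing, with uniform control, that $\psi'$ is integrable at infinity (a Poisson-smoothing / Euler--Maclaurin type bound comparing the sum of bumps to $p'$). Everything else—the Blaschke bookkeeping, continuity, and the symmetric matching of the two limits—is routine. As a sanity check, one may alternatively bypass the explicit limit computation, once $B$ is built, by invoking Hartman's Theorem \ref{thHart}, since compactness of $\mathbb{H}(e^{ip})$ is equivalent to $e^{ip}\in H^{\infty}+C$; but the constructive route above has the advantage of delivering the sharper factorization (\ref{eq5.3}) directly.
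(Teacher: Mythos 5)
You should first note that the paper itself contains no proof of Theorem \ref{thGru01}: it is quoted from \cite{Gru2001} (see also \cite{DybGru2002}, \cite{BotGruSpit2001}), and your strategy --- placing Blaschke zeros $z_{n}=\alpha_{n}+i\beta_{n}$ so that the Poisson bumps $\frac{2\beta_{n}}{(x-\alpha_{n})^{2}+\beta_{n}^{2}}$ quantize the density $p'$, with $p(\alpha_{n})$ an arithmetic progression of step $2\pi$ --- is exactly the construction of that literature (``modelling of oscillating discontinuities with the help of Blaschke products''). So the route is the right one. However, there is a genuine gap, and it sits precisely at the step you flag as the ``main obstacle'' and leave unproven; worse, with your specific parameter choice $\beta_{n}=\alpha_{n+1}-\alpha_{n}$ the claimed estimate is actually false. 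Locally the zeros form a nearly arithmetic lattice of spacing $d$, and by Poisson summation
\begin{equation*}
\sum_{k}\frac{2\beta}{(x-kd)^{2}+\beta^{2}}=\frac{2\pi}{d}\Bigl(1+2\sum_{j\geq1}e^{-2\pi j\beta/d}\cos\bigl(2\pi jx/d\bigr)\Bigr),
\end{equation*}
so with height equal to spacing ($\beta=d$) the difference $\psi'=p'-(\arg B)'$ carries a persistent oscillation of relative amplitude $2e^{-2\pi}$ that does not decay as $x\to\infty$. It is not absolutely integrable, and integrating it over a local period shows $\psi=p-\arg B$ keeps oscillating with fixed amplitude $\approx 2e^{-2\pi}$ forever; hence $\psi$ has no limits at $\pm\infty$ and $u=e^{ip}\overline{B}\notin C$. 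Your sentence ``$p'$ varies slowly on the scale $\beta_{n}$, so $\psi'$ is integrable'' conflates the slow variation of the density (which controls the zero-frequency term) with the decay of the nonzero harmonics (which is governed by $e^{-2\pi\beta_{n}/d_{n}}$ and is a fixed constant in your setup).

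The repair, which is the actual content of Grudsky's proof, is to take the imaginary parts large compared to the gaps: $\beta_{n}=h_{n}(\alpha_{n+1}-\alpha_{n})$ with $h_{n}\to\infty$ slowly (e.g.\ $h_{n}=K\log n$ with $K$ depending on $\deg p$), so that each bump smooths over $h_{n}$ zeros and the ripple is $O(e^{-2\pi h_{n}})\to0$, while the Blaschke condition $\sum\beta_{n}/(1+\alpha_{n}^{2})\lesssim\sum h_{n}n^{-(2m+2)/(2m+1)}<\infty$ survives and the error from density variation across a window of width $\beta_{n}$, of order $p''\beta_{n}/p'$, still vanishes. Controlling exactly this competition is what the hypotheses $\liminf xf''/f'>-2$, $xf''/(f')^{2}\to0$, $\sqrt{x}f''/(f')^{3/2}\to0$ quoted after the theorem encode; none of this is ``routine.'' Two smaller points: your closing suggestion to ``bypass the limit computation'' via Hartman's Theorem \ref{thHart} is circular, since to invoke it you would need to prove compactness of $\mathbb{H}(e^{ip})$ by some independent means, which is no easier than (\ref{eq5.2}) itself; and the offset-tuning argument forcing $L_{+}\in\pi\mathbb{Z}$ needs an actual continuity-in-the-offset statement (a single extra Blaschke factor cannot fix a mismatch, since any finite Blaschke factor has equal boundary limits modulo $2\pi$ at $\pm\infty$), though this detail is fixable once the main estimate is in place.
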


It is worth mentioning that this theorem is a particular case of a more
general statement originally obtained in \cite{Gru2001} (see also
\cite{DybGru2002})\ for the case of the unit circle and reformulated for the
real line in \cite{BotGruSpit2001} (see also recent \cite{GruShar2012}). This
statement says that Theorem \ref{thGru01} holds not only for polynomial but
any function $f$ such that
\[
\lim\limits_{x\rightarrow\infty}\inf\dfrac{xf^{\prime\prime}(x)}{f^{\prime
}(x)}>-2,\lim\limits_{x\rightarrow\infty}\dfrac{xf^{\prime\prime}%
(x)}{f^{\prime}\left(  x\right)  ^{2}}=0,\lim\limits_{x\rightarrow\infty
}\dfrac{\sqrt{x}f^{\prime\prime}(x)}{f^{\prime}\left(  x\right)  ^{3/2}}=0.
\]

We emphasize that functions of the form $e^{ip}$ commonly appear in the IST
approach to completely integrable PDEs. For instance, in the KdV case%
\[
p(\lambda)=t\lambda^{3}+x\lambda
\]
with real $x$ (spatial variable) and positive $t$ (time). Note that for
polynomials $p$ of even order, Theorem \ref{thGru01} fails.

\begin{definition}
A function $f\in H^{\infty}+C$ is said invertible in $H^{\infty}+C$ if $1/f\in
H^{\infty}+C$. Similarly, $f$ is not invertible in $H^{\infty}+C$ if
$1/f\notin H^{\infty}+C$.
\end{definition}

This concept is very important in the connection with invertibility of
Toeplitz operators, as the following theorem suggests (see, e.g.
\cite{BotSil06}, \cite{DybGru2002}).

\begin{theorem}
\label{th5.5}Let $\varphi\in H^{\infty}+C$ and $1/\varphi\in L^{\infty}$.
Then
\begin{align}
1/\varphi\notin H^{\infty}+C  &  \Longrightarrow\mathbb{T}(\varphi)\text{ is
left-invertible,}\label{eq5.4}\\
1/\varphi\in H^{\infty}+C  &  \Longrightarrow\mathbb{T}(\varphi)\;\text{is
Fredholm.} \label{eq5.5}%
\end{align}

\end{theorem}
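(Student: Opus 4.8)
The engine of both implications is the operator identity
\begin{equation}
\mathbb{T}(\varphi)\mathbb{T}(\psi)=\mathbb{T}(\varphi\psi)-\mathbb{H}(\mathbb{J}\varphi)\mathbb{H}(\psi),\qquad\varphi,\psi\in L^{\infty},
\label{prodid}
\end{equation}
which I would establish first, directly from the definitions (\ref{eq4.1}). On $H^{2}$ one writes $\mathbb{P}_{+}\psi f=\psi f-\mathbb{P}_{-}\psi f$, so that $\mathbb{T}(\varphi)\mathbb{T}(\psi)f=\mathbb{T}(\varphi\psi)f-\mathbb{P}_{+}\varphi\mathbb{P}_{-}\psi f$; then $\mathbb{P}_{-}\psi f=\mathbb{J}\mathbb{H}(\psi)f$ combined with (\ref{eq4.8}) and (\ref{eq4.9}) turns the last term into $\mathbb{H}(\mathbb{J}\varphi)\mathbb{H}(\psi)f$. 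Throughout, $\mathcal{K}$ denotes the ideal of compact operators on $H^{2}$, and the crucial observation, via Hartman's Theorem \ref{thHart}, is that $\mathbb{H}(\psi)\in\mathcal{K}$ precisely when $\psi\in H^{\infty}+C$. Reading (\ref{prodid}) modulo $\mathcal{K}$, the assignment $\psi\mapsto\mathbb{T}(\psi)+\mathcal{K}$ is thus multiplicative on $H^{\infty}+C$.

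For the Fredholm alternative (\ref{eq5.5}) I would apply (\ref{prodid}) twice. With $\psi=\varphi$ and $\varphi$ replaced by $1/\varphi$,
\begin{equation}
\mathbb{T}(1/\varphi)\mathbb{T}(\varphi)=\mathbb{I}-\mathbb{H}(\mathbb{J}(1/\varphi))\mathbb{H}(\varphi),
\label{leftreg}
\end{equation}
where $\mathbb{H}(\varphi)\in\mathcal{K}$ because $\varphi\in H^{\infty}+C$. Symmetrically,
\begin{equation}
\mathbb{T}(\varphi)\mathbb{T}(1/\varphi)=\mathbb{I}-\mathbb{H}(\mathbb{J}\varphi)\mathbb{H}(1/\varphi),
\label{rightreg}
\end{equation}
and when $1/\varphi\in H^{\infty}+C$ the factor $\mathbb{H}(1/\varphi)$ is compact as well. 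Hence $\mathbb{T}(1/\varphi)$ is a two-sided regularizer of $\mathbb{T}(\varphi)$, so $\mathbb{T}(\varphi)$ is Fredholm. This direction needs only Hartman's theorem and (\ref{prodid}).

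For (\ref{eq5.4}) the identity (\ref{leftreg}) alone, with $\mathbb{H}(\varphi)\in\mathcal{K}$, exhibits $\mathbb{T}(1/\varphi)$ as a \emph{left} regularizer of $\mathbb{T}(\varphi)$; consequently $\mathbb{T}(\varphi)$ is left semi-Fredholm, i.e. it has closed range and finite-dimensional kernel. To upgrade this to genuine left-invertibility it remains to verify $\ker\mathbb{T}(\varphi)=\{0\}$. Here I would invoke Coburn's lemma: since $1/\varphi\in L^{\infty}$ forces $\varphi\neq0$ a.e., at least one of $\ker\mathbb{T}(\varphi)$ and $\ker\mathbb{T}(\varphi)^{\ast}$ is trivial. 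If $\mathbb{T}(\varphi)$ is \emph{not} Fredholm, then its cokernel (closed range, so isomorphic to $\ker\mathbb{T}(\varphi)^{\ast}$) is infinite-dimensional, in particular nonzero; Coburn's lemma then gives $\ker\mathbb{T}(\varphi)=\{0\}$, and together with closed range this makes $\mathbb{T}(\varphi)$ bounded below, hence left-invertible.

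The crux is therefore to rule out Fredholmness when $1/\varphi\notin H^{\infty}+C$, i.e. the converse of (\ref{eq5.5}): for $\varphi\in H^{\infty}+C$, Fredholmness of $\mathbb{T}(\varphi)$ must \emph{imply} $1/\varphi\in H^{\infty}+C$. This is the one step not delivered by the bookkeeping of (\ref{prodid}) and Hartman's theorem. Indeed, if $\mathbb{T}(\varphi)$ were Fredholm, comparing its two-sided regularizer with the left regularizer $\mathbb{T}(1/\varphi)$ of (\ref{leftreg}) forces $\mathbb{T}(\varphi)\mathbb{T}(1/\varphi)\equiv\mathbb{I}$ modulo $\mathcal{K}$ and hence, by (\ref{rightreg}), the compactness of $\mathbb{H}(\mathbb{J}\varphi)\mathbb{H}(1/\varphi)$; but as $\mathbb{H}(\mathbb{J}\varphi)$ is not invertible this does not by itself return $\mathbb{H}(1/\varphi)\in\mathcal{K}$. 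Closing this gap requires the description of the essential spectrum of $\mathbb{T}(\varphi)$ through the Gelfand theory (maximal ideal space) of the Sarason algebra $H^{\infty}+C$, for which I would appeal to \cite{BotSil06}, \cite{DybGru2002}. I expect this converse to be the main obstacle; the remainder is the short computation (\ref{prodid}) combined with Theorem \ref{thHart} and Coburn's lemma.
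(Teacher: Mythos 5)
The first thing to note is that the paper contains no proof of Theorem \ref{th5.5} at all: it is quoted as a known result, with the reader directed to \cite{BotSil06} and \cite{DybGru2002}. So there is no in-paper argument to measure yours against, and your proposal should be judged on its own terms. Where it is self-contained, it is correct. The product identity $\mathbb{T}(\varphi)\mathbb{T}(\psi)=\mathbb{T}(\varphi\psi)-\mathbb{H}(\mathbb{J}\varphi)\mathbb{H}(\psi)$ does follow from (\ref{eq4.1}) exactly as you compute, using (\ref{eq4.8}) and (\ref{eq4.9}) to convert $\mathbb{P}_{+}\varphi\,\mathbb{J}g$ into $\mathbb{H}(\mathbb{J}\varphi)g$ for $g\in H^{2}$; combined with Hartman's Theorem \ref{thHart} this gives a complete and correct proof of (\ref{eq5.5}), with $\mathbb{T}(1/\varphi)$ as a two-sided regularizer. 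Your route to (\ref{eq5.4}) is also sound as far as it goes: the left-regularizer identity plus compactness of $\mathbb{H}(\varphi)$ gives closed range and finite-dimensional kernel, and Coburn's lemma (which does transfer to $H^{2}$ of the line: if $f\in\ker\mathbb{T}(\varphi)$ and $g\in\ker\mathbb{T}(\varphi)^{\ast}$ then $\varphi f\bar{g}\in H^{1}_{-}\cap H^{1}=\{0\}$, and nonzero Hardy functions cannot vanish on sets of positive measure) upgrades non-Fredholmness to trivial kernel, hence boundedness below, hence left-invertibility.

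You have also diagnosed the genuine core correctly: everything hinges on the converse of (\ref{eq5.5}), i.e. the Douglas-type criterion that for $\varphi\in H^{\infty}+C$ invertible in $L^{\infty}$, Fredholmness of $\mathbb{T}(\varphi)$ forces invertibility of $\varphi$ in $H^{\infty}+C$, and your remark that the regularizer algebra only returns compactness of the product $\mathbb{H}(\mathbb{J}\varphi)\mathbb{H}(1/\varphi)$, never of $\mathbb{H}(1/\varphi)$ alone, is precisely why no amount of bookkeeping with the multiplicativity of $\psi\mapsto\mathbb{T}(\psi)$ modulo compacts can close this step. That step is indeed established in the literature via the harmonic extension or the maximal ideal space of $H^{\infty}+C$ (see \cite{BotSil06}, \cite{DybGru2002}, where the resulting trichotomy — invertible in $H^{\infty}+C$ gives Fredholm, otherwise left-invertible with infinite-dimensional cokernel — is exactly statement (\ref{eq5.4})–(\ref{eq5.5})). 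Since the paper outsources the whole theorem to these same sources, importing just this one ingredient is consistent with the paper's level of rigor; your write-up in fact supplies strictly more of the proof than the paper does, and I see no gap in the parts you prove.
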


\begin{lemma}
\label{lem5.6}Let $B$ be an infinite Blaschke product, $u\in H^{\infty}+C$ and
unimodular. Then $\varphi=Bu$ is not invertible in $H^{\infty}+C$.
\end{lemma}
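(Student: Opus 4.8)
The plan is to argue by contradiction, reducing the claim to the single fact that $\bar B=1/B\notin H^{\infty}+C$ for an infinite Blaschke product $B$. First I would note that, since $B$ and $u$ are both unimodular a.e.\ on $\mathbb{R}$, the symbol $\varphi=Bu$ is unimodular a.e., so $1/\varphi=\bar\varphi\in L^{\infty}$ and the real question is whether $\bar\varphi\in H^{\infty}+C$. Suppose it were. Since $u\in H^{\infty}+C$ and, by Sarason's Theorem \ref{sarason thm}, $H^{\infty}+C$ is an algebra, the product $\bar\varphi\,u=\bar B\,\bar u\,u=\bar B|u|^{2}=\bar B$ would then lie in $H^{\infty}+C$. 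Thus it suffices to show $\bar B\notin H^{\infty}+C$.

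By Hartman's Theorem \ref{thHart}, $\bar B\in H^{\infty}+C$ if and only if $\mathbb{H}(\bar B)$ is compact, so the task becomes proving that $\mathbb{H}(\bar B)$ is \emph{not} compact. The key observation is that $\mathbb{H}(\bar B)$ acts isometrically on the model space $K_{B}:=H^{2}\ominus BH^{2}$. Indeed, for $f\in K_{B}$ one has $\langle f,Bg\rangle=\langle\bar B f,g\rangle=0$ for all $g\in H^{2}$, i.e.\ $\bar B f\in H_{-}^{2}$; hence $\mathbb{P}_{-}(\bar B f)=\bar B f$, and since $\mathbb{J}$ is an isometry and $|B|=1$ a.e.,
\[
\Vert\mathbb{H}(\bar B)f\Vert=\Vert\mathbb{P}_{-}(\bar B f)\Vert=\Vert\bar B f\Vert=\Vert f\Vert,\qquad f\in K_{B}.
\]
Because $B$ is an infinite Blaschke product, $K_{B}$ is infinite-dimensional: if $B=B_{N}B'$ with $B_{N}$ a finite partial product of degree $N$, then $BH^{2}\subseteq B_{N}H^{2}$ forces $K_{B_{N}}\subseteq K_{B}$, and $\dim K_{B_{N}}=N\to\infty$. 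Choosing an orthonormal sequence $\{e_{n}\}\subset K_{B}$, we have $e_{n}\to0$ weakly while $\Vert\mathbb{H}(\bar B)e_{n}\Vert=1$; since a compact operator maps weakly null sequences to norm null sequences, $\mathbb{H}(\bar B)$ cannot be compact. This yields $\bar B\notin H^{\infty}+C$ and completes the contradiction.

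I expect the only genuine content to be the non-compactness of $\mathbb{H}(\bar B)$, for which the isometry of $\mathbb{H}(\bar B)$ on $K_{B}$ is the crux; the surrounding steps—the unimodularity of $\varphi$ and the algebra property of $H^{\infty}+C$—are immediate from Sarason's Theorem \ref{sarason thm}. One could alternatively route the argument through the Toeplitz Fredholmness of Theorem \ref{th5.5}, but the direct algebraic contradiction above appears cleaner and more self-contained.
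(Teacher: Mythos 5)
Your proof is correct, but it takes a genuinely different route from the paper's. After the same initial algebraic reduction (both arguments use Sarason's Theorem \ref{sarason thm} to pass from invertibility of $\varphi=Bu$ to $\overline{B}=1/B\in H^{\infty}+C$), the paper proceeds through Toeplitz Fredholm theory: by Theorem \ref{th5.5}, invertibility of $B$ in $H^{\infty}+C$ would make $\mathbb{T}(\overline{B})$ Fredholm, hence with finite-dimensional kernel, and the contradiction is produced by exhibiting the explicit kernel vectors $f_{n}(x)=c_{n}(x-\overline{z_{n}})^{-1}$, one for each zero $z_n$ of $B$. You instead stay entirely on the Hankel side: Hartman's Theorem \ref{thHart} converts membership of $\overline{B}$ in $H^{\infty}+C$ into compactness of $\mathbb{H}(\overline{B})$, which you refute via the isometric action of $\mathbb{H}(\overline{B})$ on the infinite-dimensional model space $K_{B}=H^{2}\ominus BH^{2}$ together with the fact that compact operators send weakly null sequences to norm null ones. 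The two arguments are close cousins under the surface: the paper's relation $\mathbb{T}(\overline{B})f_{n}=0$ is exactly the statement $\overline{B}f_{n}\in H_{-}^{2}$, i.e.\ $f_{n}\in K_{B}$, which is the special case of your isometry identity at the vectors $f_n$; indeed by (\ref{eq4.2}) the Toeplitz part vanishing on $K_B$ and the Hankel part being isometric there are the same fact. What your route buys is self-containedness within Hankel theory — it avoids invoking the unproved Theorem \ref{th5.5} and even gives the stronger quantitative conclusion that the essential norm of $\mathbb{H}(\overline{B})$ is $1$; what the paper's route buys is alignment with the Toeplitz-invertibility machinery (Theorems \ref{th4.4}, \ref{th5.5}, \ref{th5.7}) that it reuses immediately afterward, plus maximally concrete kernel elements. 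Your unproved assertions ($\dim K_{B_N}=N$ for a finite Blaschke product of degree $N$, and orthonormal sequences being weakly null) are standard and harmless.
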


\begin{proof}
(By contradiction). Since $B\in H^{\infty}$, due to the algebraic property
(Theorem \ref{sarason thm})\ of $H^{\infty}+C,\;$one has $\varphi\in
H^{\infty}+C$. Assume that $\varphi$ is invertible in $H^{\infty}+C$, i.e.
$1/\varphi\in H^{\infty}+C$. Then by (\ref{eq5.5}) $\mathbb{T}(\varphi)$ is
Fredholm that forces $\mathbb{T}(B)$ to be Fredholm too. Indeed, $B\in
H^{\infty}$ and, since $\varphi=Bu$,
\[
1/B=u\cdot\,1/\varphi\in H^{\infty}+C.
\]
Thus $B$ is invertible in $H^{\infty}+C$ and (\ref{eq5.5}) holds. Hence
$\mathbb{T}(\overline{B})=\mathbb{T}(1/B)$ is also Fredholm and therefore by
definition
\begin{equation}
\dim\ker\mathbb{T}(\overline{B})<\infty. \label{eq5.6}%
\end{equation}
We now show that (\ref{eq5.6}) may not hold for $B$ with infinitely many zeros
$\{z_{k}\}$, which creates a desired contradiction. To this end consider the
Blaschke product (\ref{eq3.8})
\[
B(x)=\prod b_{n}(x),\;b_{n}=c_{n}\left(  \frac{x-z_{n}}{x-\overline{z_{n}}%
}\right)
\]
and set%
\[
f_{n}(x):=c_{n}(x-\overline{z_{n}})^{-1}.
\]
Clearly $f_{n}\in H^{2}$ and
\[
\mathbb{T}(\overline{B})f_{n}=\mathbb{P}_{+}\overline{B}f_{n}=\mathbb{P}%
_{+}\overline{\overline{c_{n}}(\cdot-z_{n})^{-1}B}=\mathbb{P}_{+}(\cdot
-z_{n})^{-1}\overline{B_{n}},
\]
where $B_{n}=B/b_{n}$. But $\overline{B_{n}}\in H_{-}^{\infty}$ and
$(x-z_{n})^{-1}\in H_{-}^{2}$. Hence
\[
(x-z_{n})^{-1}\overline{B_{n}(x)}\in H_{-}^{2}%
\]
and
\[
\mathbb{T}(\overline{B})f_{n}=0.
\]
Therefore $f_{n}\in\ker\mathbb{T}(\overline{B})$ and the lemma is proven as
$\{f_{n}\}$ are linearly independent.
\end{proof}

Note that Lemma \ref{lem5.6} is entirely about $H^{\infty}+C$ but its proof,
as often happens in this circle of issues, relies on operator theoretical arguments.

The next important claim directly follows from Theorem \ref{thGru01} and Lemma
\ref{lem5.6}.

\begin{theorem}
\label{theorem 5.4'}Let $u$ be a unimodular function from $H^{\infty}+C$ and
$e^{ip}$ as in Theorem \ref{thGru01}. Then $e^{ip}u$ is not invertible in
$H^{\infty}+C$.
\end{theorem}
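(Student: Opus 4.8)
The plan is to reduce the claim directly to Lemma \ref{lem5.6}, using the factorization of $e^{ip}$ supplied by Theorem \ref{thGru01}. The whole point is that multiplying by the extra unimodular factor $u$ does not destroy the ``infinite Blaschke product times unimodular $H^{\infty}+C$ function'' structure, which is precisely the hypothesis that Lemma \ref{lem5.6} needs.

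First I would invoke Theorem \ref{thGru01} to write $e^{ip}=Bv$, where $B$ is an infinite Blaschke product and $v$ is a unimodular function in $C$. (I deliberately rename the unimodular factor from Theorem \ref{thGru01} as $v$ so as not to clash with the given function $u$ in the present statement.) Multiplying by $u$ then gives $e^{ip}u=B(vu)$. Next I would verify that the product $vu$ satisfies the hypotheses of Lemma \ref{lem5.6}: since $v\in C\subset H^{\infty}+C$ and $u\in H^{\infty}+C$ by assumption, the algebra property of the Sarason class (Theorem \ref{sarason thm}) yields $vu\in H^{\infty}+C$; moreover $|vu|=|v|\,|u|=1$ a.e., so $vu$ is unimodular. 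Finally, applying Lemma \ref{lem5.6} to the infinite Blaschke product $B$ and the unimodular function $vu\in H^{\infty}+C$ shows that $B(vu)=e^{ip}u$ is not invertible in $H^{\infty}+C$, which is exactly the assertion.

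There is essentially no obstacle here: the statement is a clean corollary combining Theorem \ref{thGru01} with Lemma \ref{lem5.6}. The only points requiring a moment's attention are the notational coincidence between the unimodular factor produced by Theorem \ref{thGru01} and the hypothesis function $u$, and the check that their product remains in the algebra $H^{\infty}+C$, which is immediate from Theorem \ref{sarason thm}. All the genuine work has already been done upstream, namely in the operator-theoretic argument of Lemma \ref{lem5.6} (the construction of infinitely many independent elements of $\ker\mathbb{T}(\overline{B})$) and in the Blaschke factorization $e^{ip}=Bv$ of Theorem \ref{thGru01}.
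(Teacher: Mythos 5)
Your proof is correct and is exactly the argument the paper intends: the paper gives no separate proof, stating only that the theorem ``directly follows from Theorem \ref{thGru01} and Lemma \ref{lem5.6}'', and your write-up is precisely that reduction. The factorization $e^{ip}u=B(vu)$, the check that $vu$ is unimodular and lies in $H^{\infty}+C$ via the Sarason algebra property (Theorem \ref{sarason thm}), and the application of Lemma \ref{lem5.6} fill in the omitted details faithfully.
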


Combining Theorems \ref{th4.4} and \ref{th5.5} yields

\begin{theorem}
\label{th5.7}If $\varphi\in H^{\infty}+C$ and unimodular but not invertible
then
\begin{equation}
\Vert\mathbb{H}(\varphi)\Vert<1. \label{eq5.7}%
\end{equation}

\end{theorem}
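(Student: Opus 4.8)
The plan is to combine the two deep results already at our disposal---Widom's Theorem \ref{th4.4} and Theorem \ref{th5.5}---so that the statement reduces to a simple bookkeeping of hypotheses. Since both of these are granted, the proof is almost entirely a matter of verifying that the hypotheses of Theorem \ref{th5.5} are met and then reading off the conclusion through Theorem \ref{th4.4}.

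First I would check that Theorem \ref{th5.5} is applicable. By assumption $\varphi\in H^{\infty}+C$, so the first hypothesis of Theorem \ref{th5.5} holds verbatim. Because $\varphi$ is unimodular, $\left\vert 1/\varphi\right\vert =\left\vert \overline{\varphi}\right\vert =1$ a.e. on $\mathbb{R}$, whence $1/\varphi=\overline{\varphi}\in L^{\infty}$; this supplies the remaining hypothesis $1/\varphi\in L^{\infty}$. This small verification is the one step that must not be skipped, since without $1/\varphi\in L^{\infty}$ Theorem \ref{th5.5} has no content.

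Next I would use the assumption that $\varphi$ is \emph{not invertible} in $H^{\infty}+C$, which by definition means precisely $1/\varphi\notin H^{\infty}+C$. Feeding this into implication (\ref{eq5.4}) of Theorem \ref{th5.5} yields that $\mathbb{T}(\varphi)$ is left-invertible. Finally, $\varphi$ is unimodular, so Widom's Theorem \ref{th4.4} applies, and the left-invertibility of $\mathbb{T}(\varphi)$ is equivalent to $\Vert\mathbb{H}(\varphi)\Vert<1$, which is exactly (\ref{eq5.7}).

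There is no genuine obstacle in this argument beyond the two theorems invoked: all the analytic depth sits in the \textquotedblleft necessity\textquotedblright\ half of Widom's theorem (that left-invertibility of $\mathbb{T}(\varphi)$ forces the strict bound $\Vert\mathbb{H}(\varphi)\Vert<1$) and in the $H^{\infty}+C$ invertibility criterion (\ref{eq5.4}). The contribution of the present statement is thus organizational---it packages these into a single clean sufficient condition---rather than requiring any new estimate of its own.
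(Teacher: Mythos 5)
Your proof is correct and is essentially identical to the paper's own (the paper proves Theorem \ref{th5.7} exactly by feeding non-invertibility into implication (\ref{eq5.4}) of Theorem \ref{th5.5} to get left-invertibility of $\mathbb{T}(\varphi)$, then invoking Widom's Theorem \ref{th4.4}). Your explicit verification that unimodularity gives $1/\varphi=\overline{\varphi}\in L^{\infty}$, and your correct identification that the depth lies in the necessity half of Widom's theorem, are the only additions, and both are accurate.
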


\begin{proof}
By Theorem \ref{th5.5}, $\mathbb{T}(\varphi)$ is left-invertible. By Theorem
\ref{th4.4} we have (\ref{eq5.7}).
\end{proof}

While an immediate consequence of Theorems \ref{th5.7} and \ref{thGru01}, the
following theorem is vital to our approach.

\begin{theorem}
\label{Thm 5.6'}If $u\in H^{\infty}+C$, $\left\vert u\right\vert =1$, and $p $
is as in Theorem \ref{thGru01}, then%
\[
\Vert\mathbb{H}(e^{ip}u)\Vert<1.
\]

\end{theorem}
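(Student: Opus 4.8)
The plan is to recognize this as essentially a packaging of the results already assembled in this section: I would verify that $\varphi := e^{ip}u$ meets the three hypotheses of Theorem \ref{th5.7} and then invoke that theorem directly. The statement of Theorem \ref{th5.7} requires $\varphi$ to be (i) an element of $H^{\infty}+C$, (ii) unimodular, and (iii) not invertible in $H^{\infty}+C$; its conclusion is exactly $\Vert\mathbb{H}(\varphi)\Vert<1$.

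First I would check membership and unimodularity, both of which are routine. By Theorem \ref{thGru01} we have $e^{ip}\in H^{\infty}+C$, and $u\in H^{\infty}+C$ by hypothesis; since $H^{\infty}+C$ is an algebra (Theorem \ref{sarason thm}), the product $\varphi=e^{ip}u$ again lies in $H^{\infty}+C$. For unimodularity, $p$ is real on $\mathbb{R}$ so $|e^{ip}|=1$, and $|u|=1$ by assumption, whence $|\varphi|=1$ a.e.

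The only point of substance is the non-invertibility in $H^{\infty}+C$, and I would get it for free from Theorem \ref{theorem 5.4'}, which says precisely that $e^{ip}u$ is not invertible in $H^{\infty}+C$ whenever $u$ is a unimodular element of $H^{\infty}+C$ and $p$ is as in Theorem \ref{thGru01}. With (i)--(iii) established, Theorem \ref{th5.7} immediately yields $\Vert\mathbb{H}(e^{ip}u)\Vert<1$, completing the argument.

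Thus there is no genuine obstacle at this level: the real work has already been discharged upstream. The decisive input is the factorization $e^{ip}=B u_{0}$ into an infinite Blaschke product times a unimodular function of Theorem \ref{thGru01}, which feeds Lemma \ref{lem5.6} (an infinite Blaschke product times a unimodular $H^{\infty}+C$ function is never invertible in $H^{\infty}+C$) and hence Theorem \ref{theorem 5.4'}; the passage from non-invertibility to the strict norm bound is then supplied by the Widom criterion (Theorem \ref{th4.4}) through Theorem \ref{th5.7}. So the present theorem is correctly described as an immediate consequence, and I would keep the proof to the three short verifications above.
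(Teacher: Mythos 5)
Your proposal is correct and matches the paper's own argument: the paper states this theorem as "an immediate consequence of Theorems \ref{th5.7} and \ref{thGru01}," i.e.\ precisely the chain $e^{ip}u\in H^{\infty}+C$ (Sarason algebra), unimodularity, non-invertibility via Theorem \ref{theorem 5.4'} (itself from Theorem \ref{thGru01} and Lemma \ref{lem5.6}), and then Theorem \ref{th5.7}. Your three verifications are exactly the intended route, so nothing needs to be added.
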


\begin{theorem}
\label{rem5.8} If $\varphi\in H^{\infty}+C$
is not unimodular but $\Vert\varphi\Vert_{\infty}\leq1$ and $\mathbb{J}%
\varphi=\bar{\varphi}$ then (\ref{eq5.7}) holds.
\end{theorem}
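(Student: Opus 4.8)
The plan is to argue by contradiction, using the two hypotheses that have not yet been exploited in Theorem \ref{th5.7}, namely $\varphi\in H^{\infty}+C$ and $\mathbb{J}\varphi=\bar{\varphi}$, to make $\mathbb{H}(\varphi)$ a \emph{compact selfadjoint} operator whose norm is therefore actually attained. First I would record the free bound $\Vert\mathbb{H}(\varphi)\Vert\leq\Vert\varphi\Vert_{\infty}\leq1$, so that the only thing to exclude is $\Vert\mathbb{H}(\varphi)\Vert=1$. By Theorem \ref{thHart} (Hartman), $\varphi\in H^{\infty}+C$ forces $\mathbb{H}(\varphi)$ to be compact, and by Proposition \ref{prop4.7} the symmetry $\mathbb{J}\varphi=\bar{\varphi}$ forces it to be selfadjoint. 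Consequently its norm equals the largest modulus of an eigenvalue, and if this norm were $1$ there would exist $f\in H^{2}$ with $\Vert f\Vert=1$ and $\mathbb{H}(\varphi)f=\lambda f$, $|\lambda|=1$. (Compactness is essential here: without it the value $1$ could sit in the continuous spectrum and no eigenvector would be available.)

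Next I would trace the chain of norm inequalities that $\Vert\mathbb{H}(\varphi)f\Vert=|\lambda|\,\Vert f\Vert=\Vert f\Vert$ forces. Using the definition (\ref{eq4.1}) together with the fact that $\mathbb{J}$ is an isometry and $\mathbb{P}_{-}$ a projection,
\[
\Vert f\Vert=\Vert\mathbb{H}(\varphi)f\Vert=\Vert\mathbb{JP}_{-}\varphi f\Vert=\Vert\mathbb{P}_{-}\varphi f\Vert\leq\Vert\varphi f\Vert\leq\Vert\varphi\Vert_{\infty}\Vert f\Vert\leq\Vert f\Vert.
\]
Every inequality must then be an equality; in particular $\Vert\varphi f\Vert=\Vert f\Vert$, i.e. $\int(1-|\varphi|^{2})|f|^{2}=0$. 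Since $\Vert\varphi\Vert_{\infty}\leq1$ makes $1-|\varphi|^{2}\geq0$ pointwise, the integrand is nonnegative with zero integral, whence $f$ vanishes a.e. on the set $\{x:|\varphi(x)|<1\}$.

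Finally the hypothesis that $\varphi$ is \emph{not} unimodular enters: the set $\{|\varphi|<1\}$ has positive Lebesgue measure, so $f$ vanishes on a subset of $\mathbb{R}$ of positive measure. The contradiction then comes from the boundary uniqueness theorem for Hardy functions—a nonzero $f\in H^{2}$ cannot vanish on a set of positive measure on $\mathbb{R}$, since otherwise $\log|f(\cdot+i0)|/(1+x^{2})$ would fail to be integrable. Hence $f\equiv0$, contradicting $\Vert f\Vert=1$, and therefore $\Vert\mathbb{H}(\varphi)\Vert<1$. The single genuinely nontrivial ingredient, and the main obstacle for a reader not steeped in Hardy-space theory, is precisely this boundary uniqueness property; the rest is bookkeeping with the isometry $\mathbb{J}$ and the pointwise estimate $|\varphi|\leq1$.
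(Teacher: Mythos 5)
Your proof is correct, and its skeleton coincides with the paper's: both argue by contradiction, use Hartman's theorem (Theorem \ref{thHart}) and Proposition \ref{prop4.7} to obtain a compact selfadjoint operator whose norm, if equal to $1$, is attained at a unit eigenvector $f$ with $|\lambda|=1$, and both finish with the fact that a nonzero $H^{2}$ function cannot vanish on a set of positive measure. Where you diverge is the middle step. The paper works with the quadratic form: writing $\langle\mathbb{H}(\varphi)f,f\rangle=\langle\varphi f,\mathbb{J}f\rangle$ (via $\mathbb{P}_{-}\mathbb{J}f=\mathbb{J}f$, which follows from (\ref{eq4.9})) and applying Cauchy--Schwarz twice, it bounds $|\lambda|^{2}\leq\int|\varphi||f|^{2}$ and obtains the strict inequality $\int|\varphi||f|^{2}<1$ directly from the nonvanishing of $f$ on $S=\{|\varphi|<1\}$. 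You instead run the equality-case analysis through the norm chain $\Vert f\Vert=\Vert\mathbb{P}_{-}\varphi f\Vert\leq\Vert\varphi f\Vert\leq\Vert f\Vert$, conclude $\int(1-|\varphi|^{2})|f|^{2}=0$, hence $f=0$ a.e.\ on $S$, and then contradict boundary uniqueness. Your route is slightly cleaner --- it avoids the $\mathbb{J}f$ manipulation and the double Cauchy--Schwarz --- and it has a byproduct worth making explicit: you never actually use the eigenvalue equation, only the existence of a unit vector with $\Vert\mathbb{H}(\varphi)f\Vert=\Vert f\Vert$, which compactness alone supplies (take the first Schmidt vector, since $\Vert\mathbb{H}(\varphi)\Vert=s_{1}$ for a compact operator). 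So your argument in fact proves the conclusion without the symmetry hypothesis $\mathbb{J}\varphi=\bar{\varphi}$; that hypothesis is harmless in the paper because the theorem is applied to IST symbols, for which it holds automatically by (\ref{eq6.14}).
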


\begin{proof}
(By contradiction) Assume that $\Vert\mathbb{H}(\varphi)\Vert=1$. Since
$\mathbb{H}\left(  \varphi\right)  $ is selfadjoint and compact (by
Proposition \ref{prop4.7} and the Hartman theorem respectively),
$\mathbb{H}(\varphi)$ has a unimodular eigenvalue $\lambda$ ($\lambda=\pm1$).
For the associated normalized eigenfunction $f\in H^{2}$ we have by
(\ref{eq4.9})%
\[
\left\langle \mathbb{H}(\varphi)f,f\right\rangle =\left\langle \varphi
f,\mathbb{P}_{-}\mathbb{J}f\right\rangle =\left\langle \varphi f,\mathbb{J}%
f\right\rangle
\]
and hence by the Cauchy inequality%
\begin{align}
\left\vert \left\langle \mathbb{H}(\varphi)f,f\right\rangle \right\vert ^{2}
&  \leq\left(  \int\left\vert \varphi\right\vert \left\vert f\right\vert
\left\vert \mathbb{J}f\right\vert \right)  ^{2}\leq\int\left\vert
\varphi\right\vert \left\vert f\right\vert ^{2}\ \int\left\vert \varphi
\right\vert \left\vert \mathbb{J}f\right\vert ^{2}\nonumber\label{eig}\\
&  \leq\int\left\vert \varphi\right\vert \left\vert f\right\vert ^{2}%
\ \int\left\vert \mathbb{J}f\right\vert ^{2}=\int\left\vert \varphi\right\vert
\left\vert f\right\vert ^{2}\nonumber\\
&  =\int_{S}\left\vert \varphi\right\vert \left\vert f\right\vert ^{2}%
+\int_{\mathbb{R}\diagdown S}\left\vert \varphi\right\vert \left\vert
f\right\vert ^{2}<\Vert f\Vert_{2}^{2}=1,
\end{align}
where $S$ is a set of positive Lebesgue measure where $\left\vert
\varphi\right\vert <1$ a.e. Here we have used the fact that $f\in H^{2}$ and
hence cannot vanish on $S$. The inequality (\ref{eig}) implies that
$\left\vert \lambda\right\vert <1$ which is a contradiction.
\end{proof}

\section{The classical IST and Hankel operators\label{sec6}}

In this section we review some basics of the classical IST and prepare the
necessary bulk of formulas (see, e.g. \cite{Deift79}, \cite{March86}). We will
also demonstrate the convenience of the Hankel operator approach to the Cauchy
problem for the KdV equation in the classical situation of initial data
decaying fast enough. Some derivations are given whenever we have a concise
way to do so.

Through this section we assume that the initial profile $q$ in (\ref{KdV}%
)-(\ref{KdVID}) is real and short range, i.e. $\left(  1+\left\vert
x\right\vert \right)  q\left(  x\right)  \in L^{1}$. In the sequel we refer to
such initial data as classical.

\subsection{Direct scattering problem}

Associate with $q$ the full line Schr\"{o}dinger operator $\mathbb{L}%
_{q}=-\partial_{x}^{2}+q(x)$. As well-known, $\mathbb{L}_{q}$ is self-adjoint
on $L^{2}$ and%
\[
\operatorname*{Spec}(\mathbb{L}_{q})=\{-\kappa_{n}^{2}\}_{n=1}^{N}%
\cup\mathbb{R}_{+}.
\]
The singular spectrum of $\mathbb{L}_{q}$ consists of a finite number of
simple negative eigenvalues $\{-\kappa_{n}^{2}\}$, called bound states, and
absolutely continuous (a.c.) two fold component filling $\mathbb{R}_{+}$.
There is no singular continuous spectrum. Two linearly independent
(generalized) eigenfunctions of the a.c. spectrum $\psi_{\pm}(x,k),\;k\in
\mathbb{R}$, can be chosen to satisfy
\begin{equation}
\psi_{\pm}(x,k)=e^{\pm ikx}+o(1),\;\partial_{x}\psi_{\pm}(x,k)\mp ik\psi_{\pm
}(x,k)=o(1),\ \ x\rightarrow\pm\infty. \label{eq6.2}%
\end{equation}
The functions $\psi_{\pm}$ are referred to as Jost solutions of the
Schr\"{o}dinger equation
\begin{equation}
\mathbb{L}_{q}\psi=k^{2}\psi. \label{eq6.3}%
\end{equation}
We summarize the properties of $\psi_{\pm}$ in

\begin{theorem}
[On Jost solutions]\label{th6.1}The Jost solutions $\psi_{\pm}(x,k)$ are
analytic for $\operatorname{Im}k>0$ and continuous for $\operatorname{Im}%
k\geq0$. Moreover as $k\rightarrow\infty,\;\operatorname{Im}k\geq0$,
\begin{equation}
\psi_{\pm}(x,k)=e^{\pm ikx}\left(  1\pm\frac{i}{2k}\int_{x}^{\pm\infty
}q+O\left(  \frac{1}{k^{2}}\right)  \right)  \label{eq6.4}%
\end{equation}
and
\begin{equation}
\psi_{\pm}(x,-k)=\overline{\psi_{\pm}(x,k)},\;k\in\mathbb{R}. \label{eq6.5}%
\end{equation}

\end{theorem}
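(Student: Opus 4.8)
The plan is to convert the Schr\"{o}dinger equation (\ref{eq6.3}) together with the normalization (\ref{eq6.2}) into a Volterra integral equation for the modified Jost function and to read off all three assertions from its Neumann series. By the spatial reflection $x\mapsto-x$ (the operator $\mathbb{J}$) and the $\pm$ convention it suffices to treat $\psi_{+}$. Writing $\psi_{+}(x,k)=e^{ikx}m(x,k)$ and integrating (\ref{eq6.3}) twice subject to (\ref{eq6.2}), I would arrive at
\[
m(x,k)=1+\int_{x}^{\infty}D_{k}(x,y)\,q(y)\,m(y,k)\,dy,\qquad D_{k}(x,y):=\frac{e^{2ik(y-x)}-1}{2ik},
\]
and check conversely, by differentiating twice, that any solution of this equation produces a solution of (\ref{eq6.3}) obeying (\ref{eq6.2}); the uniqueness theorem for Volterra equations is what identifies it as \emph{the} Jost solution.

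The analytic input is the elementary bound $\left\vert D_{k}(x,y)\right\vert\le\min\!\left(y-x,\,1/\left\vert k\right\vert\right)$ for $y\ge x$ and $\operatorname{Im}k\ge0$, which I would obtain from $D_{k}(x,y)=(y-x)\int_{0}^{1}e^{2iks(y-x)}\,ds$ together with $\left\vert e^{2iks(y-x)}\right\vert\le1$ in the closed upper half-plane; note that $D_{k}$ is \emph{entire} in $k$, with $D_{0}(x,y)=y-x$. Solving by successive approximation $m=\sum_{n\ge0}m_{n}$, $m_{0}\equiv1$, $m_{n+1}(x,k)=\int_{x}^{\infty}D_{k}(x,y)q(y)m_{n}(y,k)\,dy$, the bound $\left\vert D_{k}\right\vert\le y-x$ combined with an ordered-simplex symmetrization (using $y_{j}-y_{j-1}\le y_{j}-x$ and the fact that the symmetric integrand integrates over the cube to the full power and over the ordered simplex to $1/n!$ of it) gives
\[
\left\vert m_{n}(x,k)\right\vert\le\frac{1}{n!}\left(\int_{x}^{\infty}(y-x)\left\vert q(y)\right\vert dy\right)^{n},
\]
the finiteness of the integral being exactly the short-range hypothesis $\left(1+\left\vert x\right\vert\right)q\in L^{1}$. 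Hence the series converges absolutely and uniformly for $x\ge a$, $\operatorname{Im}k\ge0$. Since each $m_{n}$ is analytic in $\mathbb{C}^{+}$ and, because $D_{k}$ is entire and the majorant is $k$-independent and integrable, continuous up to $\operatorname{Im}k=0$ including $k=0$, the uniform limit $m$, and therefore $\psi_{+}=e^{ikx}m$, inherits these properties, which is the first assertion.

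For (\ref{eq6.4}) I would extract the leading order straight from the integral equation. Rewriting $D_{k}=-\tfrac{1}{2ik}\bigl(1-e^{2ik(y-x)}\bigr)$ gives
\[
m(x,k)=1-\frac{1}{2ik}\int_{x}^{\infty}q(y)m(y,k)\,dy+\frac{1}{2ik}\int_{x}^{\infty}e^{2ik(y-x)}q(y)m(y,k)\,dy.
\]
A one-line refinement of the estimate above, bounding the outermost kernel by $1/\left\vert k\right\vert$, shows $m-1=O(1/k)$ uniformly, so in the first integral $m$ may be replaced by $1$ at the cost of $O(1/k^{2})$, producing the announced term $+\tfrac{i}{2k}\int_{x}^{\infty}q$. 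The genuinely delicate point, and the one I expect to be the main obstacle, is the oscillatory integral: for real $k\to\infty$ it is $o(1)$ by Riemann--Lebesgue, and for $\operatorname{Im}k>0$ it vanishes by dominated convergence, so it is $o(1/k)$; upgrading this to the stated $O(1/k^{2})$ remainder requires one integration by parts in $y$, hence a little regularity of $qm$, or equivalently pushing the expansion through the next Neumann iterate. I would pin down precisely which mild additional assumption on $q$ delivers $O(1/k^{2})$, since bare short-range integrability yields only $o(1/k)$.

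The reality relation (\ref{eq6.5}) is then immediate and costs nothing: for real $k$ one has $\overline{D_{k}}=D_{-k}$ and $\overline{q}=q$, so conjugating the integral equation shows that $\overline{m(x,k)}$ satisfies the same equation as $m(x,-k)$; uniqueness gives $\overline{m(x,k)}=m(x,-k)$, whence $\overline{\psi_{+}(x,k)}=e^{-ikx}\,\overline{m(x,k)}=e^{-ikx}m(x,-k)=\psi_{+}(x,-k)$.
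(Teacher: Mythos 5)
Your proposal is, in substance, exactly the paper's proof: the paper disposes of Theorem \ref{th6.1} in two sentences, saying that variation of parameters subject to (\ref{eq6.2}) produces a Volterra-type integral equation whose necessarily convergent Neumann series ``readily yields the conclusions.'' Your kernel $D_{k}$, the bound $\left\vert D_{k}(x,y)\right\vert \leq \min\left(  y-x,1/\left\vert k\right\vert \right)  $, the $1/n!$ ordered-simplex estimate with majorant $\int_{x}^{\infty}(y-x)\left\vert q(y)\right\vert dy$, the analyticity and continuity up to $\operatorname{Im}k=0$ of the uniform limit, and the derivation of (\ref{eq6.5}) from $\overline{D_{k}}=D_{-k}$ plus Volterra uniqueness are precisely the details that sketch suppresses, and they are all correct.

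The one substantive point is the remainder in (\ref{eq6.4}), and your hesitation there is justified: under the bare hypothesis $\left(  1+\left\vert x\right\vert \right)  q\in L^{1}$ the honest remainder is $o(1/k)$, not $O(1/k^{2})$. After peeling off the $O(1/k^{2})$ contributions from $m-1$, what is left is $\frac{1}{2k}\int_{x}^{\infty}e^{2ik(y-x)}q(y)\,dy$, and this Fourier-type integral is merely $o(1)$ as $\left\vert k\right\vert \rightarrow\infty$ (uniformly in $\operatorname{Im}k\geq0$, by approximating $q$ in $L^{1}$ by smooth compactly supported functions and using $\left\vert e^{2ik(y-x)}\right\vert \leq1$). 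It is genuinely not $O(1/k)$ in general: a compactly supported $q$ with an integrable singularity $\left\vert y-y_{0}\right\vert ^{-1/2}$ satisfies the short-range hypothesis yet gives this integral Fresnel-type decay $\sim\left\vert k\right\vert ^{-1/2}$, so the total remainder is of order $\left\vert k\right\vert ^{-3/2}$. Your diagnosis of the cure is also right: one integration by parts, hence $q$ absolutely continuous with $q^{\prime}\in L^{1}$ (or stronger decay), restores $O(1/k^{2})$. Note, however, that the weaker $o(1/k)$ version is all the paper ever uses downstream: the recovery formula (\ref{eq6.22}) needs only $2ikY\rightarrow-\int_{x}^{\infty}q$, and (\ref{faddeev asympt}) together with Corollary \ref{Corol of Faddeev} needs only $y\rightarrow1$. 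So your proof is complete for the form of the theorem the paper actually relies on; the gap you flagged is an overstatement in the folklore formulation of (\ref{eq6.4}) itself, not a defect of your argument, and the paper's two-sentence proof simply glosses over it.
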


This theorem is nearly folklore. One rewrites (\ref{eq6.3}) as $\psi
^{\prime\prime}+k^{2}\psi=q(x)\psi$ and then solves it by variation of
parameters (keeping $q\psi$ as the non homogeneous term) with boundary
conditions (\ref{eq6.2}). The integral equation for $\psi_{\pm}$ obtained this
way is Volterra-type and thus the (necessarily convergent) Neumann series
obtained by iteration readily yields the conclusions of Theorem \ref{th6.1}.

To remove the oscillatory behavior of $\psi_{\pm}$ let us introduce the
functions, sometimes called Faddeev,
\begin{equation}
y_{\pm}(k,x):=e^{\mp ikx}\psi_{\pm}(x,k). \label{y}%
\end{equation}
The function $y:=y_{+}$ will be used more frequently. Its properties
\cite{Deift79} are given in

\begin{theorem}
[On Faddeev functions]\label{Faddeev}For any $x$, the function $y\left(
k,x\right)  $ is analytic in $\mathbb{C}^{+}$, continuous on the real line
and
\begin{equation}
\ y\left(  k,x\right)  \rightarrow1,\ \left\vert k\right\vert \rightarrow
\infty,\ \operatorname{Im}k\geq0. \label{faddeev asympt}%
\end{equation}
All zeros of $y\left(  \cdot,x\right)  ~$in $\mathbb{C}^{+}$ are imaginary and
for their number $N_{x}$ we have%
\[
N_{x}\leq\int_{x}^{\infty}\left(  s-x\right)  \left\vert q(s)\right\vert
ds<\int_{x}^{\infty}s\left\vert q(s)\right\vert ds.
\]
If $k=i\nu$ is a zero of $y\left(  \cdot,x\right)  $ then $k^{2}=-\nu^{2}$ is
a bound state of the Dirichlet Schr\"{o}dinger operator $\mathbb{L}_{q}^{D}$
on $L^{2}(x,\infty)$. The only real zero of $y\left(  \cdot,x\right)  $ could
be $k=0.$ If $y\left(  0,x\right)  =0$ then $k^{2}=0$ is not a bound state of
$\mathbb{L}_{q}^{D}$ on $L^{2}(x,\infty)$. The (full line) operator
$\mathbb{L}_{q}$ has a bound state iff $y\left(  0,x\right)  =0$ for some $x$.
\end{theorem}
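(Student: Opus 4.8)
The plan is to treat the four essentially independent assertions in turn, all driven by the Faddeev integral equation for $y=y_{+}$. First I would record that equation: substituting $\psi_{+}=e^{ikx}y$ (see \eqref{y}) into \eqref{eq6.3} converts it to $y^{\prime\prime}+2iky^{\prime}=qy$, and the normalization $y\to1$, $y^{\prime}\to0$ at $+\infty$ is equivalent to the Volterra equation
\[
y(k,x)=1+\int_{x}^{\infty}\frac{e^{2ik(s-x)}-1}{2ik}\,q(s)\,y(k,s)\,ds,
\]
whose kernel $D_{k}(t)=\int_{0}^{t}e^{2iku}\,du$ is entire in $k$ with $|D_{k}(t)|\leq t$ for $t\geq0$, $\operatorname{Im}k\geq0$. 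Because $(1+|x|)q\in L^{1}$, the Neumann series converges; each iterate is analytic in $\mathbb{C}^{+}$ and continuous up to $\mathbb{R}$, which yields the analyticity and continuity of $y$, and the bound $|D_{k}(t)|\leq|k|^{-1}$ together with dominated convergence gives $y\to1$ as $|k|\to\infty$. This part is routine (it is the $y$-version of Theorem \ref{th6.1}).

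Next I would identify the zeros. Since $e^{-ikx}\neq0$, a zero $y(k_{0},x_{0})=0$ with $k_{0}\in\mathbb{C}^{+}$ is exactly $\psi_{+}(x_{0},k_{0})=0$. For $\operatorname{Im}k_{0}>0$ the Jost solution decays like $e^{ik_{0}x}$, hence lies in $L^{2}(x_{0},\infty)$, and its vanishing at $x_{0}$ makes it a Dirichlet eigenfunction of $\mathbb{L}_{q}^{D}$ on $L^{2}(x_{0},\infty)$ with eigenvalue $k_{0}^{2}$; self-adjointness forces $k_{0}^{2}\in\mathbb{R}$, and with $\operatorname{Im}k_{0}>0$ this forces $k_{0}=i\nu$, $k_{0}^{2}=-\nu^{2}<0$. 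This proves the zeros are imaginary and establishes the correspondence with negative Dirichlet eigenvalues. Running the argument backwards (each such eigenvalue has a one-dimensional eigenspace spanned by $\psi_{+}(\cdot,i\nu)$) shows $N_{x}$ equals the number of negative Dirichlet eigenvalues on $(x,\infty)$, which is controlled by the classical Bargmann bound for the half-line Dirichlet problem, $N_{x}\leq\int_{x}^{\infty}(s-x)|q(s)|\,ds$; the coarser bound $\int_{x}^{\infty}s|q|$ then follows, for $x\geq0$, from $s-x\leq s$.

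For the real zeros, if $y(k,x_{0})=0$ with real $k\neq0$ then \eqref{eq6.5} gives $y(-k,x_{0})=\overline{y(k,x_{0})}=0$ as well, so the two solutions $\psi_{+}(\cdot,\pm k)$ of \eqref{eq6.3} vanish simultaneously at $x_{0}$; but their Wronskian is the nonzero constant $-2ik$, a contradiction, so the only possible real zero is $k=0$. If $y(0,x_{0})=0$ then $\psi_{+}(\cdot,0)$ solves $\mathbb{L}_{q}u=0$, vanishes at $x_{0}$, yet tends to $1$ at $+\infty$ and hence is not in $L^{2}(x_{0},\infty)$; as the recessive solution is the only candidate, $0$ is not a Dirichlet eigenvalue.

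The last assertion is where the real work lies. Writing $u_{0}:=y(0,\cdot)=\psi_{+}(\cdot,0)$ for the zero-energy solution recessive at $+\infty$, the claim is equivalent to the Sturm oscillation identity that the number of negative eigenvalues of the full-line $\mathbb{L}_{q}$ equals the number of zeros of $u_{0}$ on $\mathbb{R}$; granting it, a bound state exists iff $u_{0}$ has a zero, i.e. iff $y(0,x)=0$ for some $x$. I would prove the identity by reusing the previous steps: the count $Z(x):=N_{x}$ of negative Dirichlet eigenvalues on $(x,\infty)$ is non-increasing in $x$ by min-max, vanishes for $x$ large, tends to the full-line count $N$ as $x\to-\infty$, and, since zeros of $y(\cdot,x)$ can reach $\partial\mathbb{C}^{+}=\mathbb{R}$ only at $k=0$, can jump only at zeros of $u_{0}$. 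The hard part will be the threshold analysis at energy $0$: justifying $Z(-\infty)=N$ (imposing a Dirichlet wall far to the left does not alter the negative-eigenvalue count in the limit) and controlling, via continuity in $x$ of the imaginary zeros, exactly when $Z$ decreases. These use the decay of $q$ at $-\infty$ implicit in the short-range hypothesis and a genuine limiting argument, in contrast to the essentially formal steps for the other three assertions.
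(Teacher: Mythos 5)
You should first know that the paper contains no proof of Theorem \ref{Faddeev} to compare against: the theorem is imported verbatim from Deift--Trubowitz (``Its properties \cite{Deift79} are given in\dots''), with only the remark following Theorem \ref{th6.1} hinting at the Volterra/Neumann-series mechanism. Measured against the standard proof in \cite{Deift79}, your reconstruction follows the same route, and the parts you carry out are sound: the Volterra equation with kernel $D_k(t)=(e^{2ikt}-1)/(2ik)$ and the bounds $|D_k(t)|\le\min(t,1/|k|)$ give analyticity in $\mathbb{C}^+$, continuity up to $\mathbb{R}$ (including $k=0$, since $D_k$ is entire in $k$), and (\ref{faddeev asympt}); the identification of a zero $y(k_0,x_0)=0$, $\operatorname{Im}k_0>0$, with a Dirichlet eigenvalue $k_0^2$ on $(x_0,\infty)$, together with self-adjointness, correctly forces $k_0\in i\mathbb{R}_+$, and the bijection with negative Dirichlet eigenvalues (via uniqueness of the $L^2$ solution at $+\infty$) reduces the bound on $N_x$ to the classical half-line Bargmann estimate; the Wronskian argument using (\ref{eq6.5}) and $W(\psi_+(\cdot,k),\overline{\psi_+(\cdot,k)})=-2ik\neq0$ excludes real zeros $k\neq0$; and the linear growth of the second zero-energy solution disposes of the claim about $y(0,x)=0$. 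Your caveat that the chain $\int_x^\infty(s-x)|q|<\int_x^\infty s|q|$ only makes sense for $x>0$ is a fair observation: the paper's statement is itself loose on this point.

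The one genuine soft spot is the final biconditional, and you have correctly located but not closed it. Your counting scheme ($Z(x)=N_x$ monotone, $Z(+\infty)=0$, $Z(-\infty)=N$, with zeros able to enter or leave $\overline{\mathbb{C}^+}$ only through $k=0$ by Hurwitz-type continuity) proves cleanly that a full-line bound state forces $y(0,x)=0$ for some $x$; but the converse needs the statement that a zero of $u_0=\psi_+(\cdot,0)$ actually \emph{creates} a negative eigenvalue, i.e.\ that $Z$ genuinely jumps there, and your sketch only shows $Z$ can jump nowhere else. As written, your argument yields one implication plus an unproven oscillation-theoretic identity. Here is a short way to close it that avoids threshold analysis entirely: for $\nu\ge0$ the function $y(i\nu,x)$ is real (real equation, real asymptotics) and tends to $1$ as $\nu\to\infty$; if $x_0$ is the largest zero of $u_0$, then $u_0>0$ on $(x_0,\infty)$ and, the zero being simple, $y(0,x)=u_0(x)<0$ for $x$ slightly to the left of $x_0$, so the intermediate value theorem produces $\nu(x)>0$ with $y(i\nu(x),x)=0$, i.e.\ a negative Dirichlet eigenvalue on $(x,\infty)$; Dirichlet bracketing ($\mathbb{L}_q$ is dominated in the form sense by the decoupled operator) then gives $\inf\operatorname{Spec}(\mathbb{L}_q)<0$, and since the essential spectrum is $[0,\infty)$ for short-range $q$, a bound state exists. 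With this insertion (and the decoupling limit $Z(-\infty)=N$ you flagged, which your IVT route in fact renders unnecessary for the bare ``iff''), your proposal becomes a complete proof along the same lines as \cite{Deift79}.
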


Note that the function $\int_{x}^{\infty}\left(  s-x\right)  \left\vert
q(s)\right\vert ds$ is decreasing to zero and hence by Theorem \ref{Faddeev}
$N_{x}=0$ for some $x$. This motivates

\begin{definition}
\bigskip\label{large enough}Let $q$ be subject to Hypothesis \ref{hyp1.1} (2).
We call a number $a$ large enough and denote $a>>1$ if $\int_{a}^{\infty
}\left(  x-a\right)  \left\vert q(x)\right\vert dx<1$.
\end{definition}

We will use Theorem \ref{Faddeev} primarily in the form

\begin{corollary}
\label{Corol of Faddeev}For some $a$ large enough
\begin{equation}
y\left(  \cdot,x\right)  ^{\pm1}\in H^{\infty}\cap C. \label{H infty prop}%
\end{equation}

\end{corollary}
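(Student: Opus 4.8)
The plan is to treat the two claims $y(\cdot,x)\in H^\infty\cap C$ and $y(\cdot,x)^{-1}\in H^\infty\cap C$ separately, and to reduce both to a single quantitative estimate. Membership in $C$ is immediate from Theorem \ref{Faddeev}: $y(\cdot,x)$ is continuous on $\mathbb{R}$ and tends to $1$ as $k\to\pm\infty$, so its two limits at the ends of the real line coincide and its boundary restriction lies in $C$; once we know $y(\cdot,a)$ does not vanish on $\mathbb{R}$ the same applies to $1/y(\cdot,a)$. Analyticity in $\mathbb{C}^+$ is also supplied by Theorem \ref{Faddeev}. Hence everything reduces to showing that, for a suitable $a$, the function $y(\cdot,a)$ is bounded and bounded away from $0$ on the closed half plane $\overline{\mathbb{C}^+}$.

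To get this I would start from the Volterra representation of the Faddeev function (see \cite{Deift79})
\begin{equation*}
y(k,x)=1+\int_x^\infty\frac{e^{2ik(s-x)}-1}{2ik}\,q(s)\,y(k,s)\,ds ,
\end{equation*}
together with the elementary kernel bound $\bigl|\tfrac{e^{2ik(s-x)}-1}{2ik}\bigr|=\bigl|\int_0^{s-x}e^{2ik\tau}\,d\tau\bigr|\le s-x$, valid for $s\ge x$ and $\operatorname{Im}k\ge0$. Iterating the equation and integrating over the ordered simplices $x\le s_1\le\dots\le s_n$ produces the termwise bound $\eta(x)^n/n!$ for the $n$-th Neumann iterate, where I write $\eta(x)=\int_x^\infty(s-x)|q(s)|\,ds$. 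Summing gives, uniformly for $\operatorname{Im}k\ge0$,
\begin{equation*}
|y(k,x)-1|\le e^{\eta(x)}-1 .
\end{equation*}
Since $\eta(x)\downarrow0$ as $x\to\infty$, one may fix $a$ so large that $\eta(a)<\log 2$ (such an $a$ is in particular $\gg1$ in the sense of Definition \ref{large enough}); then $\sup_{\operatorname{Im}k\ge0}|y(k,a)-1|=:r<1$, so $y(\cdot,a)$ maps $\overline{\mathbb{C}^+}$ into the disk $\{|z-1|\le r\}$, which is bounded away from the origin.

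The corollary then drops out. On one hand $|y(k,a)|\le 1+r<2$, so $y(\cdot,a)\in H^\infty$, and by the first paragraph $y(\cdot,a)\in C$. On the other hand $y(\cdot,a)$ has no zeros in $\overline{\mathbb{C}^+}$, so $1/y(\cdot,a)$ is analytic in $\mathbb{C}^+$, continuous on $\mathbb{R}$, bounded by $1/(1-r)$, and tends to $1$ at infinity; hence $1/y(\cdot,a)\in H^\infty\cap C$ as well.

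The step I expect to be the main obstacle---and the reason I would route the argument through the uniform estimate rather than through the zero count already available---is excluding a zero of $y(\cdot,a)$ on the boundary, i.e. at the single admissible real point $k=0$. The inequality $N_x\le\int_x^\infty(s-x)|q(s)|\,ds$ of Theorem \ref{Faddeev} forces $N_a=0$ and thus kills all zeros in the open half plane $\mathbb{C}^+$, but it is silent about $k=0$. The estimate $|y(k,a)-1|\le e^{\eta(a)}-1<1$ controls $y$ on all of $\overline{\mathbb{C}^+}$ at once, disposing of the interior zeros and the boundary zero simultaneously, which is what makes $1/y(\cdot,a)\in H^\infty$ clean.
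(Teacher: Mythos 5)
Your proof is correct, and it takes a genuinely more self-contained route than the paper, which in fact offers no proof of this corollary at all: there it is presented as an immediate consequence of Theorem \ref{Faddeev}, the intended reading being that for $a\gg1$ (Definition \ref{large enough}) the zero count $N_a\le\int_a^\infty(s-a)|q(s)|\,ds<1$ forces $N_a=0$, which together with analyticity, continuity on $\mathbb{R}$, and $y\to1$ gives (\ref{H infty prop}). You instead bypass the zero count and prove the single uniform estimate $|y(k,a)-1|\le e^{\eta(a)}-1$ on $\overline{\mathbb{C}^+}$ via the Neumann series for the Volterra equation, choosing $\eta(a)<\log 2$ so that $y(\cdot,a)$ maps the closed half plane into a disk avoiding the origin; the kernel bound, the simplex estimate $\eta(a)^n/n!$, and the summation are all standard and correct (this is precisely the Deift--Trubowitz machinery underlying Theorem \ref{Faddeev} itself). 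What your route buys is exactly the point you identify: the paper's stated facts do not by themselves settle the possible boundary zero at $k=0$ --- Theorem \ref{Faddeev} explicitly permits $y(0,x)=0$, and $N_x$ counts only zeros in the open half plane; indeed, with the paper's threshold $\eta(a)<1$ your estimate yields only $|y-1|\le e-1>1$, which does not exclude $y(0,a)=0$, whereas your sharper threshold $\eta(a)<\log 2$ kills interior and boundary zeros simultaneously, consistent with the corollary's ``for \emph{some} $a$ large enough'' (and harmless, since $\eta(a)\downarrow0$). As a bonus your argument gives explicit bounds $\|y(\cdot,a)\|_\infty\le 1+r$ and $\|y(\cdot,a)^{-1}\|_\infty\le(1-r)^{-1}$ with $r=e^{\eta(a)}-1$, which the qualitative reading of the paper does not provide.
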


Since $q$ is real, $\overline{\psi_{\pm}}$ also solves (\ref{eq6.3}) and one
can easily see that the pairs $\{\psi_{+},\overline{\psi_{+}}\}$ and
$\{\psi_{-},\overline{\psi_{-}}\}$ form fundamental sets for (\ref{eq6.3}).
Hence $\psi_{\mp}$ is a linear combination of $\{\psi_{\pm},\overline
{\psi_{\pm}}\}$. Elementary Wronskian considerations then yield the (basic)
right/left scattering relations
\begin{equation}
T(k)\psi_{\mp}(x,k)=\overline{\psi_{\pm}(x,k)}+R_{\pm}(k)\psi_{\pm
}(x,k),\;k\in\mathbb{R}, \label{basic scatt identity}%
\end{equation}
where $T,R_{\pm}$, called the transmission and right/left reflection
coefficients respectively. It immediately follows from
(\ref{basic scatt identity}) that $(\operatorname{Im}k=0)$
\begin{align}
T  &  =\frac{2ik}{W(\psi_{-},\psi_{+})},\label{eq6.8}\\
R_{+}  &  =\frac{W(\overline{\psi_{+}},\psi_{-})}{W(\psi_{-},\psi_{+}%
)},\ \ R_{-}=\frac{W(\psi_{+},\overline{\psi_{-}})}{W(\psi_{-},\psi_{+}%
)},\ \label{eq6.10}%
\end{align}
where the Wronskians ($W\left(  f,g\right)  =fg^{\prime}-f^{\prime}g$) are
independent of $x\;$($\partial_{x}\;$is missing in $\mathbb{L}_{q}$). By
Theorem \ref{th6.1}, $W(\psi_{-},\psi_{+})$ is analytic in $\mathbb{C}^{+}$
and by (\ref{eq6.2})%
\[
W(\psi_{-},\psi_{+})=2ik+o(1),\;k\rightarrow\infty,\;\operatorname{Im}k\geq0.
\]
Therefore $T(k)$ is analytic in $\mathbb{C}^{+}$ except for zeros of
$W(\psi_{-},\psi_{+})$ and
\[
T(k)=1+o(1),\;k\rightarrow\infty,\;\operatorname{Im}k\geq0.
\]
If $k_{0}$ is a zero of $W(\psi_{-},\psi_{+})$ then $\psi_{+}(x,k_{0})=\mu
_{0}\psi_{-}(x,k_{0})$ (linearly dependent) with some $\mu_{0}\neq0$, that
occurs only for $k_{0}\in i\mathbb{R}_{+}$ such that $k_{0}^{2}=-\kappa
_{0}^{2}$, where $-\kappa_{0}^{2}$ is a bound state of $\mathbb{L}_{q}$. Next,
from the well-known (and easily verifiable) formula $\partial_{k}W(\psi
_{-},\psi_{+})=2k\int\psi_{-}\psi_{+}$ one has
\[
\left.  \partial_{k}W(\psi_{-},\psi_{+})\right\vert _{k=i\kappa_{0}}%
=2i\kappa_{0}\mu_{0}^{\mp1}\int\psi_{\pm}^{2}(\cdot,i\kappa_{0}),
\]
which means that $i\kappa_{0}$ is a simple zero of $W(\psi_{-},\psi_{+})$. It
follows from (\ref{eq6.8}) that
\begin{align*}
\operatorname*{Res}\limits_{i\kappa_{0}}T  &  =\left.  \frac{2ik}{\partial
_{k}W(\psi_{-},\psi_{+})}\right\vert _{k=i\kappa_{0}}=i\mu_{0}^{\pm1}\left(
\int\psi_{\pm}^{2}(\cdot,i\kappa_{0})\right)  ^{-1}\\
&  =i\mu_{0}^{\pm1}\Vert\psi_{\pm}(\cdot,i\kappa_{0})\Vert_{2}^{-2}.
\end{align*}
The quantity
\begin{equation}
c_{0}^{\pm}\overset{\operatorname*{def}}{=}\Vert\psi_{\pm}(\cdot,i\kappa
_{0})\Vert_{2}^{-2} \label{eq6.11}%
\end{equation}
is called the right/left norming constant of a bound state $-\kappa_{0}^{2}$.
Its role in the IST is fundamental but we are not aware of its clear physical meaning.

The quantities $T,\;R_{\pm},\;(\kappa_{n},c_{n}^{\pm})$ are called scattering
and they can be obtained from the Jost solutions $\psi_{\pm}$ of
$\mathbb{L}_{q}\psi=k^{2}\psi$. In other words, the scattering quantities can
be read off the complete set of the spectral data of $\mathbb{L}_{q}$. We
summarize the information about them in the following theorem.

\begin{theorem}
\label{th6.2}The transmission coefficient $T\in C$ and is analytic in
$\mathbb{C}^{+}$ except for a finite number of simple poles $\{i\kappa
_{n}\}_{n=1}^{N}$ with the residues
\begin{equation}
\operatorname{Res}(T,i\kappa_{n})=i\mu_{n}^{\pm1}c_{n}^{\pm}, \label{eq6.12}%
\end{equation}
where $c_{n}^{\pm}$ are norming constants defined by (\ref{eq6.11}) and
$\mu_{n}$ determined from
\begin{equation}
\psi_{+}(x,i\kappa_{n})=\mu_{n}\psi_{-}(x,i\kappa_{n}). \label{eq6.13}%
\end{equation}
Moreover,%
\[
\lim T(k)=1,\;k\rightarrow\infty,\;\operatorname{Im}k\geq0.
\]
The reflection coefficients $R_{\pm}\in C$ (but need not be analytic),
$|R(k)|<1$ for $k\neq0$ and generically\footnote{I.e. $R\left(  0\right)  >-1$
only in exceptional cases and can be destroyed by a small perturbation.}
$R(0)=-1$. Furthermore,%
\begin{equation}
T(-k)=\overline{T(k)},\ \ \ R_{\pm}(-k)=\overline{R_{\pm}(k)},\ \ \ \left\vert
T\left(  k\right)  \right\vert ^{2}+\left\vert R\left(  k\right)  \right\vert
^{2}=1,\ \ k\in\mathbb{R}. \label{eq6.14}%
\end{equation}

\end{theorem}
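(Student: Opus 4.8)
The plan is to read off every assertion from the two Wronskian representations (\ref{eq6.8})--(\ref{eq6.10}) together with the three structural facts about the Jost solutions recorded in Theorem \ref{th6.1}: their analyticity in $\mathbb{C}^{+}$ and continuity up to $\mathbb{R}$, the large-$k$ expansion (\ref{eq6.4}), and the reflection symmetry (\ref{eq6.5}). Several of the claims are already in hand from the discussion preceding the statement: the analyticity of $T$ in $\mathbb{C}^{+}$ away from $\{i\kappa_{n}\}$, the simplicity of those poles, the residue formula (\ref{eq6.12}), and $\lim_{k\to\infty}T(k)=1$ were all derived there, so for those I would only need to assemble the pieces. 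What remains is the behavior of $R_{\pm}$ on the real line, the bound $|R|<1$, the value $R(0)=-1$, and the three identities (\ref{eq6.14}).

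For the symmetries I would substitute $k\mapsto-k$ into (\ref{eq6.8})--(\ref{eq6.10}) and invoke (\ref{eq6.5}) in the form $\psi_{\pm}(\cdot,-k)=\overline{\psi_{\pm}(\cdot,k)}$. The only algebraic fact needed is that the Wronskian of conjugates is the conjugate Wronskian, $W(\bar f,\bar g)=\overline{W(f,g)}$, which is immediate from $W(f,g)=fg'-f'g$. Combined with $2i(-k)=\overline{2ik}$ for real $k$, this turns (\ref{eq6.8}) into $T(-k)=\overline{T(k)}$ at once, and the same bookkeeping applied to (\ref{eq6.10}), keeping track of which factors conjugate, produces $R_{\pm}(-k)=\overline{R_{\pm}(k)}$.

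The unitarity relation $|T|^{2}+|R|^{2}=1$ is the genuine computation. I would exploit that $W(\psi_{+},\overline{\psi_{+}})$ is independent of $x$ and evaluate it two ways. From the $+\infty$ asymptotics (\ref{eq6.2}) one gets $W(\psi_{+},\overline{\psi_{+}})=-2ik$ directly, and likewise $W(\psi_{-},\overline{\psi_{-}})=2ik$ from the $-\infty$ asymptotics. On the other hand, using the scattering relation (\ref{basic scatt identity}) in the form $T\psi_{+}=\overline{\psi_{-}}+R_{-}\psi_{-}$ to rewrite $\psi_{+}$ and $\overline{\psi_{+}}$ in the basis $\{\psi_{-},\overline{\psi_{-}}\}$, expanding the Wronskian bilinearly and discarding the vanishing $W(\psi_{-},\psi_{-})$ and $W(\overline{\psi_{-}},\overline{\psi_{-}})$ terms, yields $W(\psi_{+},\overline{\psi_{+}})=|T|^{-2}(|R_{-}|^{2}-1)\,2ik$. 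Equating the two expressions gives $|T|^{2}+|R_{-}|^{2}=1$, and the identical argument starting from $W(\psi_{-},\overline{\psi_{-}})$ gives $|T|^{2}+|R_{+}|^{2}=1$.

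The remaining real-line properties then fall out. Since $T=2ik/W(\psi_{-},\psi_{+})$ with $W(\psi_{-},\psi_{+})$ finite, $T(k)\neq0$ for real $k\neq0$, so $|T(k)|>0$ and the unitarity identity forces $|R(k)|<1$ there. Continuity of $R_{\pm}$ on $\mathbb{R}\setminus\{0\}$ comes from continuity of the Jost functions and non-vanishing of the denominator $W(\psi_{-},\psi_{+})$; membership in $C$ additionally requires $R_{\pm}(k)\to0$ as $|k|\to\infty$, which follows from unitarity together with $T\to1$ (both limits equal $0$, matching the definition of $C$). For $R(0)=-1$ in the generic case $W(\psi_{-},\psi_{+})(0)\neq0$: at $k=0$ the functions $\psi_{\pm}(\cdot,0)$ are real by (\ref{eq6.5}), hence the numerator of $R_{+}$ in (\ref{eq6.10}) becomes $W(\psi_{+},\psi_{-})=-W(\psi_{-},\psi_{+})$, giving $R_{+}(0)=-1$ (and $T(0)=0$). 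The main obstacle I anticipate is precisely this behavior at $k=0$: establishing continuity of $T$ and $R_{\pm}$ there and correctly separating the generic case from the zero-energy resonance $W(\psi_{-},\psi_{+})(0)=0$ (where $T(0)\neq0$) is the only point requiring care beyond routine Wronskian algebra, and it is where the short-range hypothesis $(1+|x|)q\in L^{1}$ is genuinely used.
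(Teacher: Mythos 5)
Your proposal is correct and takes essentially the same route as the paper: Theorem \ref{th6.2} is presented there as a summary of classical scattering theory, with the $T$-assertions (analyticity, simplicity of the poles, the residue formula (\ref{eq6.12}), and $T\to1$) already derived in the preceding text exactly as you observe, and the remaining claims left to the references \cite{Deift79}, \cite{March86} --- your conjugation symmetries and the unitarity computation via the $x$-independence of $W(\psi_{\pm},\overline{\psi_{\pm}})=\mp 2ik$ are precisely the standard Wronskian arguments those sources use, and they check out (including the deduction $|R|<1$ for $k\neq0$ and $R(0)=-1$ when $W(\psi_-,\psi_+)(0)\neq0$). The one item you flag but do not carry out --- continuity of $T$ and $R_{\pm}$ at $k=0$, which in the resonance case rests on $W(\psi_-,\psi_+)$ having at most a simple zero at $k=0$ under $(1+|x|)q\in L^{1}$ --- is likewise handled by citation in the paper, so your accounting of where the short-range hypothesis genuinely enters is accurate.
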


\subsection{Inverse scattering problem\label{inverse scat}}

One asks what is the minimal subset of scattering quantities that determines
$q$ completely?

The answer can be seen from the following arguments. Take one (e.g. right) of
the basic scattering relations (\ref{basic scatt identity}) and rewrite it in
the form (recall (\ref{cubic exp}))
\begin{equation}
Ty_{-}=\bar{y}_{+}+R\xi_{x}y_{+} \label{eq6.15}%
\end{equation}
Let us regard (\ref{eq6.15}) as a Hilbert-Riemann problem of determining
$y_{\pm}$ by given $T,R$ which we will solve by Hankel operator techniques.
The potential $q$ can then be easily found by (\ref{eq6.4}).

By Theorems \ref{th6.1}, \ref{th6.2} $Ty_{-}$ in (\ref{eq6.15}) is meromorphic
in $\mathbb{C}^{+}$ with simple poles at $i\kappa_{n}$ and residences%
\begin{align}
\operatorname*{Res}\limits_{k=i\kappa_{n}}T\left(  k\right)  y_{-}\left(
k,x\right)   &  =y_{-}(i\kappa_{n},x)\operatorname*{Res}\limits_{k=i\kappa
_{n}}T\left(  k\right) \nonumber\label{res}\\
&  =i\mu_{n}y_{-}(i\kappa_{n},x)c_{n}^{+}=ic_{n}^{+}\xi_{x}(i\kappa
_{n})y(i\kappa_{n},x),
\end{align}
where we have used (\ref{eq6.12}), (\ref{eq6.13}). Note now that for each
fixed $x$
\[
T\left(  k\right)  y_{-}\left(  k,x\right)  -1-\sum_{n=1}^{N}\frac{ic_{n}%
\xi_{x}(i\kappa_{n})}{k-i\kappa_{n}}y(i\kappa_{n},x)\in H^{2}.
\]
Abbreviating $R_{x}:=R\xi_{x},\ c_{x,n}:=c_{n}^{+}\xi_{x}(i\kappa_{n}),$
rewrite (\ref{eq6.15}) in the form%

\begin{align}
&  T\left(  k\right)  y_{-}\left(  k,x\right)  -1-%
{\displaystyle\sum\limits_{n=1}^{N}}
\frac{ic_{x,n}}{k-i\kappa_{n}}y\left(  i\kappa_{n},x\right) \nonumber\\
&  =\overline{\left(  y\left(  k,x\right)  -1\right)  }+R_{x}\left(  k\right)
\left(  y\left(  k,x\right)  -1\right) \nonumber\\
&  +R_{x}\left(  k\right)  -%
{\displaystyle\sum\limits_{n=1}^{N}}
\frac{ic_{x,n}}{k-i\kappa_{n}}y\left(  i\kappa_{n},x\right)  . \label{eq6.16}%
\end{align}
Noticing that the last term in (\ref{eq6.16}) is in $H_{-}^{2}$, we can apply
the Riesz projection $\mathbb{P}_{-}$ to (\ref{eq6.16}). Thus
\begin{equation}
\mathbb{P}_{-}(\overline{Y}+R_{x}Y)+\mathbb{P}_{-}R_{x}-\sum_{n=1}^{N}%
ic_{x,n}\ \frac{Y\left(  i\kappa_{n},x\right)  }{\cdot-i\kappa_{n}}-\sum
_{n=1}^{N}\frac{ic_{x,n}}{\cdot-i\kappa_{n}}=0, \label{eq6.17}%
\end{equation}
where $Y:=y-1$. It is clear that $Y\in H^{2}$ for any $x\in\mathbb{R}$. Due to
(\ref{eq6.5}), $\overline{Y}=\mathbb{J}Y$ and by (\ref{eq4.9}) we have
\begin{equation}
\mathbb{P}_{-}\overline{Y}=\mathbb{P}_{-}\mathbb{J}Y=\mathbb{JP}%
_{+}Y=\mathbb{J}Y. \label{eq6.18}%
\end{equation}
By (\ref{P_})
\begin{equation}
\sum_{n=1}^{N}ic_{x,n}\ \frac{Y\left(  i\kappa_{n},x\right)  }{\cdot
-i\kappa_{n}}=\mathbb{P}_{-}\sum_{n=1}^{N}ic_{x,n}\ \frac{Y(\cdot,x)}%
{\cdot-i\kappa_{n}}. \label{eq6.19}%
\end{equation}
Inserting (\ref{eq6.18}) and (\ref{eq6.19}) into (\ref{eq6.17}), we obtain
\[
\mathbb{J}Y+\mathbb{P}_{-}\left(  R_{x}-\sum_{n=1}^{N}\frac{ic_{x,n}}%
{\cdot-i\kappa_{n}}\right)  Y=-\mathbb{P}_{-}\left(  R_{x}-\sum_{n=1}^{N}%
\frac{ic_{x,n}}{\cdot-i\kappa_{n}}\right)  .
\]
Applying $\mathbb{J}$ to both sides of this equation yields
\begin{equation}
(\mathbb{I}+\mathbb{H}(\varphi))Y=-\mathbb{H}(\varphi)1, \label{eq6.20}%
\end{equation}
where $\mathbb{H}(\varphi)$ is the Hankel operator defined in Definition
\ref{def4.1} with symbol%
\[
\varphi\left(  k\right)  =\varphi_{x}(k)=R(k)\xi_{x}(k)+\sum_{n=1}^{N}%
\frac{c_{n}\xi_{x}(i\kappa_{n})}{\kappa_{n}+ik}%
\]
where $x$ is a real parameter $\left(  \xi_{x}(k)=e^{2ikx}\right)  $.

Due to (\ref{eq6.14}), $\mathbb{J}\varphi=\overline{\varphi}$ and hence by
Proposition \ref{prop4.7} $\mathbb{H}(\varphi)$ is selfadjoint. Note that
$\mathbb{H}(\varphi)1$ on the right hand side of (\ref{eq6.20}) should be
interpreted as
\[
\mathbb{H}(\varphi)1=\mathbb{P}_{+}\bar{\varphi}\in H^{2}.
\]

It is now clear that if we show that (\ref{eq6.20}) is uniquely solvable and
$Y(x,k)$ is its solution then the potential $q\left(  x\right)  $ can be found
from (\ref{eq6.4}) by
\begin{equation}
q(x)=\partial_{x}\lim2ikY(k,x),\ \ \ k\rightarrow\infty. \label{eq6.22}%
\end{equation}

Thus, (\ref{eq6.20}) suggests that what one needs to know to recover $q$ is
the (right) reflection coefficient $R(k)$ for $k\geq0$, bound states
$\{-\kappa_{n}^{2}\}_{n=1}^{N}$ and their (right) norming constants
$\{c_{n}\}_{n=1}^{N}$\footnote{Similarly, the left reflection coefficient
$R_{-}$ and left norming constants $\{c_{n}^{-}\}$ in place of $R,c_{n}$.}.

The set $S_{q}=\{R(k),\;k\geq0,\;(\kappa_{n},c_{n})_{n=1}^{N}\}$ is called the
(right)\ scattering data for $\mathbb{L}_{q}$. The solubility of
(\ref{eq6.20}) is equivalent to bounded invertibility of $\mathbb{I}%
+\mathbb{H}(\varphi)$.

\subsection{Inverse scattering transform}

To reformulate the classical IST in terms of Hankel operators, we recall the
classical fact that the initial short range profile $q$ in (\ref{KdV}%
)-(\ref{KdVID}) evolves under the KdV flow in such a way that the scattering
data $S_{q}(t)$ for $q(x,t)$ evolves by (\ref{time evol}). It is convenient to
introduce
\[
S_{q}(x,t)\overset{\operatorname*{def}}{=}\left\{  R(k)\xi_{x,t}%
(k),\;k\geq0,\;(\kappa_{n},c_{n}\xi_{x,t}(i\kappa_{n}))_{n=1}^{N}\right\}  ,
\]
the time evolved scattering data corresponding to the shifted initial profile
$q(\cdot+x)$.

Observe that the KdV flow preserves at least the Schwartz class (an elementary
well-known fact based on pure PDE techniques) and hence the inverse scattering
procedure discussed in Subsection \ref{inverse scat} also applies to the
scattering data $S_{q}(x,t)$. It is remarkable that if one solves
(\ref{eq6.20}) with
\begin{equation}
\varphi=\varphi_{x,t}(k)=R(k)\xi_{x,t}(k)+\sum_{n=1}^{N}\frac{c_{n}\xi
_{x,t}(i\kappa_{n})}{\kappa_{n}+ik} \label{eq6.23}%
\end{equation}
by the Fredholm series formula then $q(x,t)$ computed by (\ref{eq6.22})
simplifies to
\begin{equation}
q(x,t)=-2\partial_{x}^{2}\log\det(\mathbb{I}+\mathbb{H}(\varphi_{x,t})),
\label{eq6.24}%
\end{equation}
where the determinant is understood in the classical Fredholm sense.

The formula (\ref{eq6.24}) is a derivation of the well-known Dyson (also
called Bargman or log-determinant) formula (see, e.g. \cite{Dyson76},
\cite{Popper84}).

We finally arrive at the following version of the classical IST%
\begin{equation}
q(x)\overset{\text{(\ref{eq6.23})}}{\longrightarrow}\mathbb{H}(\varphi
_{x,t})\overset{\text{(\ref{eq6.24})}}{\longrightarrow}q(x,t). \label{eq6.25}%
\end{equation}
There has been nothing new in this section. Our derivation of (\ref{eq6.20})
(its Fourier representation in the famous Gelfand-Levitan-Marchenko integral
equation) consists of well-known classical components \cite{NovikovetalBook}.
Even if (\ref{eq6.25}) has not explicitly appeared in the literature before,
it does not add much value to the classical IST. We shall use, however,
(\ref{eq6.25}) as a suitable starting point to extend IST far beyond standard
assumption (such as decay at infinity) on the initial data $q$. But much
deeper understanding of the Hankel operator is required.

\section{The structure of the reflection coefficient\label{Refl}}

To continue our program we shall understand the structure of reflection
coefficient $R$ appearing in (\ref{eq6.23}). We treat first the classical case
and it will then be quite transparent how to generalize it.

\subsection{The classical case}

We consider the right reflection coefficient $R:=R_{+}$ only. Recall our
notation (\ref{cubic exp}).

\begin{proposition}
[Structure of the classical reflection coefficient]\label{prop7.2}Suppose $q$
is real and such that $\left(  1+\left\vert x\right\vert \right)  q\left(
x\right)  \in L^{1}$. Let $\{R,(\kappa_{n},c_{n})\}$ denote the scattering
data. Then for some $a$ large enough (in the sense of Definition
\ref{large enough}) the (right) reflection coefficient $R$ can be split into
\begin{equation}
R=A_{a}+r_{a}\xi_{a}^{-1}. \label{R-split}%
\end{equation}
The function $A_{a}$ is meromorphic in $\mathbb{C}^{+}$ with the simple
poles\footnote{Recall $-\kappa_{n}^{2}\in\operatorname{Spec}(\mathbb{L}%
_{q}),\;n=1,2,\ldots,N.$} $\{i\kappa_{n}\}_{n=1}^{N}$ and corresponding
residues
\begin{equation}
\operatorname{Res}\left(  A_{a},i\kappa_{n}\right)  =ic_{n} \label{residues}%
\end{equation}
and admits the representations
\begin{align}
A_{a}\left(  k\right)   &  =T\left(  k\right)  \frac{\psi_{-}\left(
a,k\right)  }{\psi_{+}\left(  a,k\right)  }\label{R rep 1}\\
&  =\xi_{a}^{-1}\left(  k\right)  S_{a}\left(  k\right)  /B\left(  k\right)  ,
\label{R rep 2}%
\end{align}
where $S_{a}\in H^{\infty}\cap C,\ \left\Vert S_{a}\right\Vert _{\infty}%
\leq2,~\ $and
\begin{equation}
B\left(  k\right)  =\prod_{n=1}^{N}\frac{k-i\kappa_{n}}{k+i\kappa_{n}},
\label{B class}%
\end{equation}
is the (finite) Blaschke product with simple zeros at $\{i\kappa_{n}%
\}_{n=1}^{N}$. For $r_{a}$ we have
\begin{equation}
r_{a}(k)=-\overline{y\left(  k,a\right)  }/y\left(  k,a\right)  \in C.
\label{eq7.9}%
\end{equation}

\end{proposition}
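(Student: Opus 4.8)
The plan is to read the entire statement off one identity: the right basic scattering relation (\ref{basic scatt identity}) frozen at the spatial point $x=a$. First I would take the right relation $T\psi_-=\overline{\psi_+}+R\psi_+$, set $x=a$, and divide by $\psi_+(a,k)$; this is legitimate on $\overline{\mathbb{C}^+}$ once $a$ is large enough (Definition \ref{large enough}), since then $y(\cdot,a)^{\pm1}\in H^\infty\cap C$ by Corollary \ref{Corol of Faddeev} and hence $\psi_+(a,\cdot)=e^{ika}y(\cdot,a)$ has no zeros there. On $\mathbb{R}$ this yields
\begin{equation}
R(k)=T(k)\frac{\psi_-(a,k)}{\psi_+(a,k)}-\frac{\overline{\psi_+(a,k)}}{\psi_+(a,k)}. \label{plan-split}
\end{equation}
The first term is the claimed $A_a$ of (\ref{R rep 1}). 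In the second term I substitute $\psi_+(a,k)=e^{ika}y(k,a)$ from (\ref{y}) and $\overline{\psi_+(a,k)}=e^{-ika}\overline{y(k,a)}$ (real $k$), so that it equals $\xi_a^{-1}(k)\bigl(-\overline{y(k,a)}/y(k,a)\bigr)$; recalling $\xi_a(k)=e^{2ika}$ this is exactly $r_a\xi_a^{-1}$ with $r_a$ as in (\ref{eq7.9}), giving the split (\ref{R-split}).

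Next I would establish the analytic structure of $A_a=T\psi_-/\psi_+$. By Theorem \ref{th6.1} the functions $\psi_\pm(a,\cdot)$ are analytic in $\mathbb{C}^+$, and by Theorem \ref{th6.2} the factor $T$ is meromorphic there with simple poles exactly at $\{i\kappa_n\}$; since $\psi_+(a,\cdot)$ is zero-free in $\mathbb{C}^+$ for $a$ large enough, $A_a$ inherits precisely these poles. The residues then come from (\ref{eq6.12}) and (\ref{eq6.13}): as $\psi_+(a,i\kappa_n)=\mu_n\psi_-(a,i\kappa_n)$,
\[
\operatorname{Res}(A_a,i\kappa_n)=\frac{\psi_-(a,i\kappa_n)}{\psi_+(a,i\kappa_n)}\,\operatorname{Res}(T,i\kappa_n)=\frac{1}{\mu_n}\cdot i\mu_n c_n=ic_n,
\]
which is (\ref{residues}).

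For representation (\ref{R rep 2}) I pass to Faddeev functions: $\psi_-(a,k)=e^{-ika}y_-(k,a)$ and $\psi_+(a,k)=e^{ika}y(k,a)$ turn $A_a$ into $\xi_a^{-1}T\,y_-(\cdot,a)/y(\cdot,a)$, so with $B$ as in (\ref{B class}) one sets $S_a:=BT\,y_-(\cdot,a)/y(\cdot,a)$. Here $BT$ is analytic in $\mathbb{C}^+$ because the simple zeros of $B$ at $i\kappa_n$ cancel the poles of $T$, and it is bounded, continuous up to $\mathbb{R}$, with limit $1$ at infinity; $1/y(\cdot,a)\in H^\infty\cap C$ by Corollary \ref{Corol of Faddeev}; and $y_-(\cdot,a)$ is analytic in $\mathbb{C}^+$, continuous up to $\mathbb{R}$ and bounded. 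Hence $S_a\in H^\infty\cap C$. For the bound I restrict to $\mathbb{R}$, where $|R|\le1$ (Theorem \ref{th6.2}), $|r_a|=1$ (equal moduli of numerator and denominator) and $|\xi_a|=1$, so $|A_a|\le2$; since $|B|=|\xi_a|=1$ on $\mathbb{R}$ we get $|S_a|=|A_a|\le2$ a.e.\ there, and as $S_a\in H^\infty$ its norm is its boundary supremum by (\ref{H^p norm}), so $\|S_a\|_\infty\le2$. Finally $r_a\in C$ holds because $y(\cdot,a)$ is continuous, zero-free on $\mathbb{R}$ and tends to $1$ at $\pm\infty$ (whence $r_a\to-1$), using $\overline{y(k,a)}=y(-k,a)$ from (\ref{eq6.5}).

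The step I expect to be the main obstacle is the claim $S_a\in H^\infty$, and within it the control of the Faddeev function $y_-(\cdot,a)$ attached to the boundary condition at $-\infty$: one must show it is analytic, bounded and continuous on $\overline{\mathbb{C}^+}$, which rests on the classical Volterra/Gronwall estimate and hence on the two-sided integrability $(1+|x|)q\in L^1$ rather than the one-sided decay of Hypothesis \ref{hyp1.1}. Everything else is bookkeeping with the identity (\ref{plan-split}) and the already-recorded properties of $T$, $\psi_\pm$ and $y$.
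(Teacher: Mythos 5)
Your proposal is correct and follows essentially the same route as the paper's proof: the same division of the basic scattering relation at $x=a$ by $\psi_{+}(a,\cdot)$ (legitimate for $a>>1$ via Corollary \ref{Corol of Faddeev}), the same residue computation via (\ref{eq6.12})--(\ref{eq6.13}), and the same boundary estimate $\left\vert S_{a}\right\vert =\left\vert \xi_{a}BA_{a}\right\vert \leq\left\vert R\right\vert +\left\vert r_{a}\xi_{a}^{-1}\right\vert \leq2$ combined with (\ref{H^p norm}). The only cosmetic difference is that where you verify directly that $BT\in H^{\infty}\cap C$ by pole--zero cancellation and the limit $T\rightarrow1$ at infinity, the paper cites the classical factorization $T=S/B$ with $S$ an explicit outer function from \cite{Deift79} (the resulting $S_{a}=BT\,y_{-}/y$ is the same function), and the step you flag as the main obstacle --- boundedness and continuity of $y_{-}(\cdot,a)$ on the closed upper half-plane --- is already settled by Theorem \ref{th6.1} together with the asymptotics (\ref{eq6.4}).
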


\begin{proof}
From the right basic scattering relation (\ref{basic scatt identity}) one has
\begin{equation}
R(k)=T(k)\frac{\psi_{-}\left(  k,a\right)  }{\psi_{+}\left(  k,a\right)
}-\frac{\overline{\psi_{+}\left(  k,a\right)  }}{\psi_{+}\left(  k,a\right)  }
\label{R1}%
\end{equation}
and by (\ref{y}) equation (\ref{R-split}) follows with $A_{a}$ and $r_{a}$
given by (\ref{R rep 1}) and (\ref{R rep 2}) respectively. By Theorems
\ref{th6.1} and \ref{th6.2}, $\psi_{\pm}$ and $T$ are analytic and hence
$A_{a}$ is meromorphic in $\mathbb{C}^{+}$. Next, by Corollary
\ref{Corol of Faddeev} a number $a$ can be found so that$\ A_{a}\left(
k\right)  $ and $T\left(  k\right)  \psi_{-}\left(  k,a\right)  $ share the
same poles. For the residues, by Theorem \ref{th6.2}, one has%
\[
\operatorname*{Res}_{k=i\kappa_{n}}T(k)\frac{\psi_{-}(a,k)}{\psi_{+}%
(a,k)}=\frac{\psi_{-}(a,i\kappa_{n})}{\psi_{+}(a,i\kappa_{n})}%
\operatorname*{Res}_{k=i\kappa_{n}}T(k)=\frac{1}{\mu_{n}}i\mu_{n}c_{n}%
=ic_{n}.
\]
Since $c_{n}>0$, $A_{a}\left(  k\right)  $ and $T\left(  k\right)  $ also
share same poles.

Let us show (\ref{R rep 2}). It is well-known \cite{Deift79} that
\[
T(k)=S\left(  k\right)  /B\left(  k\right)  ,
\]
where $B$ is given by (\ref{B class}) and
\[
S\left(  k\right)  =\exp\left\{  \frac{1}{2\pi i}\int\frac{\log\left(
1-\left\vert R\left(  s\right)  \right\vert ^{2}\right)  }{s-k}dk\right\}  .
\]
One can now easily see from Theorem \ref{th6.2} that $S\in H^{\infty}\cap
C~$(even an outer function), and $\left\Vert S\right\Vert _{\infty}\leq1.$ The
representation (\ref{R rep 2}) follows from (\ref{R rep 1}) with $S_{a}\left(
k\right)  =S\left(  k\right)  y_{-}\left(  k,a\right)  /y_{+}\left(
k,a\right)  $. By Corollary \ref{Corol of Faddeev}, $S_{a}\in H^{\infty}$. It
remains to estimate its $H^{\infty}$-norm. Due to (\ref{H^p norm}) we can do
it on the real line. By (\ref{R rep 2}) and (\ref{R-split})
\begin{align*}
\left\vert S_{a}\right\vert  &  =\left\vert \xi_{a}BA_{a}\right\vert
=\left\vert \xi_{a}B\left(  R-r_{a}\xi_{a}^{-1}\right)  \right\vert \\
&  \leq\left\vert R-r_{a}\xi_{a}^{-1}\right\vert \leq2.
\end{align*}
Noticing that by (\ref{H infty prop}) $r_{a}\in C$ concludes the proof.
\end{proof}

\bigskip Note that it is claimed in \cite{McLeodOlver83} (but no rigorous
arguments are provided) that $R(k)$ can be analytically continued into the
upper half plane under the only assumption that $\left(  1+x^{2}\right)  q\in
L^{1}$. If it was true then $\psi_{+}\left(  \cdot,a\right)  $ would
analytically continue into $\mathbb{C}_{-}$. The latter requires an
exponential decay of $q$.

The remarkable feature of (\ref{R-split}) is that while $A_{a}\left(
k\right)  $ does depend on $a$ but for $a>>1$ the poles of $A_{a}\left(
k\right)  $ occur only at the purely imaginary points $\{i\kappa_{n}%
\}_{n=1}^{N}$ such that $\{-\kappa_{n}^{2}\}_{n=1}^{N}$ is the set of bound
states of $\mathbb{L}_{a}$, the residues of $A_{a}$ at $i\kappa_{n}$ being the
norming constant $c_{n}$. This means that for $a>>1$ the function $A_{a}$
uniquely recovers the bound state information $(\kappa_{n},c_{n})$ and hence
the knowledge of $\{R,A_{a}\}$ is equivalent to the knowledge of
$\{R,(\kappa_{n},c_{n})\}$. Thus if we know (say) $\left.  q\right\vert
_{\mathbb{R}_{+}}$ we can find $y\left(  k,a\right)  $ for $a>>1$ and hence
$r_{a}\left(  k\right)  $ for any real $k$. One then computes $A_{a}$ by
(\ref{R-split}). We state what we have arrived at in two corollaries.

\begin{corollary}
\label{finite rou}The measure
\[
d\rho\left(  s\right)  =-i\sum_{n=1}^{N}\operatorname*{Res}\left(
A_{a},i\kappa_{n}\right)  \delta\left(  s-\kappa_{n}\right)  ds=\sum_{n=1}%
^{N}c_{n}\delta\left(  s-\kappa_{n}\right)  ds
\]
is independent of $a>>1$.
\end{corollary}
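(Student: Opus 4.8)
The plan is to extract the conclusion directly from the polar data already assembled in Proposition~\ref{prop7.2}; the substance of the corollary is simply that the atoms and weights of $d\rho$ are global spectral invariants of $\mathbb{L}_q$, whereas $a$ only parametrizes the splitting (\ref{R-split}).

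First I would pin down the location of the poles. From (\ref{R rep 1}) we have $A_a(k)=T(k)\,\psi_-(a,k)/\psi_+(a,k)$, and since $\psi_+(a,k)=e^{ika}y(k,a)$ its zeros in $\mathbb{C}^+$ are precisely the zeros of the Faddeev function $y(\cdot,a)$. The hypothesis $a>>1$ (Definition~\ref{large enough}) is exactly $\int_a^\infty (x-a)|q(x)|\,dx<1$, so Theorem~\ref{Faddeev} yields $N_a=0$, i.e. $y(\cdot,a)$ and hence $\psi_+(a,\cdot)$ are zero-free in $\mathbb{C}^+$. Consequently $\psi_-(a,k)/\psi_+(a,k)$ is analytic there, and the only poles of $A_a$ are those inherited from $T$, sitting at $\{i\kappa_n\}_{n=1}^N$. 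Since $\{-\kappa_n^2\}$ is the (fixed) point spectrum of the full-line operator $\mathbb{L}_q$, the support $\{\kappa_n\}$ of $d\rho$ carries no dependence on $a$.

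Next I would confirm that the residues do not drift with $a$. Reusing the computation in the proof of Proposition~\ref{prop7.2}, with $\psi_+(x,i\kappa_n)=\mu_n\psi_-(x,i\kappa_n)$ valid for all $x$ (so that $\mu_n$ is $x$-independent) and $\operatorname{Res}(T,i\kappa_n)=i\mu_n c_n$ from (\ref{eq6.12}),
\[
\operatorname{Res}(A_a,i\kappa_n)=\frac{\psi_-(a,i\kappa_n)}{\psi_+(a,i\kappa_n)}\,\operatorname*{Res}_{k=i\kappa_n}T(k)=\frac{1}{\mu_n}\,i\mu_n c_n=ic_n .
\]
The two copies of $\mu_n$ cancel, and what survives is the norming constant $c_n=\|\psi_+(\cdot,i\kappa_n)\|_2^{-2}$, again an intrinsic quantity of $\mathbb{L}_q$. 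Hence the weights $-i\operatorname{Res}(A_a,i\kappa_n)=c_n$ are $a$-independent as well, and $d\rho=\sum_{n=1}^N c_n\,\delta(\cdot-\kappa_n)$ is the same measure for every $a>>1$.

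The only step demanding genuine care is the zero-freeness of $\psi_+(a,\cdot)$; everything else is bookkeeping on Proposition~\ref{prop7.2}. I do not anticipate a real obstacle, since the essential content is the familiar fact that the bound-state data $(\kappa_n,c_n)$ depend only on $q$ and not on the auxiliary cutoff $a$.
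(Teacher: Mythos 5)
Your proof is correct and takes essentially the same route as the paper: the corollary is read off directly from the residue computation $\operatorname{Res}(A_{a},i\kappa_{n})=\frac{1}{\mu_{n}}\,i\mu_{n}c_{n}=ic_{n}$ carried out inside the proof of Proposition \ref{prop7.2}, with the poles pinned at $\{i\kappa_{n}\}$, the bound states of the full-line operator $\mathbb{L}_{q}$, so that both atoms and weights are intrinsic to $q$ and free of $a$. Your explicit verification that $y(\cdot,a)$ is zero-free in $\mathbb{C}^{+}$ for $a>>1$ via the bound $N_{a}<1$ from Theorem \ref{Faddeev} is precisely the content of Corollary \ref{Corol of Faddeev}, which the paper invokes for the same purpose.
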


\begin{corollary}
\label{Partion info}For $a>>1~\ $the pair $\{R,A_{a}\}$ is a set of scattering
data, i.e. it recovers the potential $q$ uniquely. Moreover, if $R$ and
$\left.  q\right\vert _{\mathbb{R}_{+}}$ are known then $\left.  q\right\vert
_{\mathbb{R}_{-}}$ is also known.
\end{corollary}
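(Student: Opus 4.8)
The plan is to reduce both assertions to Proposition \ref{prop7.2} and the classical inverse scattering procedure of Subsection \ref{inverse scat}. For the first assertion I would show that $\{R,A_a\}$ carries exactly the same information as the classical scattering data $S_q=\{R,(\kappa_n,c_n)\}$. By Corollary \ref{Corol of Faddeev}, for $a>>1$ one has $y(\cdot,a)^{\pm1}\in H^\infty\cap C$, so $\psi_+(a,\cdot)$ is zero-free in $\mathbb{C}^+$; hence in the representation (\ref{R rep 1}), $A_a(k)=T(k)\psi_-(a,k)/\psi_+(a,k)$, the only poles in $\mathbb{C}^+$ are those of $T$, namely the simple poles $\{i\kappa_n\}_{n=1}^N$. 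Reading off these poles together with their residues $ic_n$ from (\ref{residues}) recovers $(\kappa_n,c_n)$ exactly, so the knowledge of $\{R,A_a\}$ is equivalent to that of $S_q$. The latter determines $q$ uniquely through the unique solvability of (\ref{eq6.20}) and the reconstruction formula (\ref{eq6.22}).

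For the second assertion I would exploit the Volterra structure of the Jost solution. Since $\psi_+(x,k)$ is pinned down by its asymptotics at $+\infty$ together with the Schr\"{o}dinger equation, $\psi_+(a,k)$ depends only on $q$ restricted to $(a,\infty)\subset\mathbb{R}_+$. Therefore the knowledge of $\left.q\right\vert_{\mathbb{R}_+}$ lets us compute $y(k,a)=e^{-ika}\psi_+(a,k)$ for all real $k$, and then $r_a(k)=-\overline{y(k,a)}/y(k,a)$ by (\ref{eq7.9}). As $R$ is assumed known and $\xi_a(k)=e^{2ika}$ is explicit, the split (\ref{R-split}) yields $A_a=R-r_a\xi_a^{-1}$. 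By the first assertion $\{R,A_a\}$ reconstructs $q$ in full, and in particular $\left.q\right\vert_{\mathbb{R}_-}$ is determined.

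I do not expect a serious analytic obstacle here: both parts amount to assembling ingredients already established. The one point that genuinely requires the hypothesis $a>>1$ (Definition \ref{large enough}) is the claim that $A_a$ has \emph{no} poles in $\mathbb{C}^+$ beyond $\{i\kappa_n\}$; this is exactly what Corollary \ref{Corol of Faddeev} supplies, since a zero of $\psi_+(a,\cdot)$ in $\mathbb{C}^+$ would create a spurious pole of $A_a$ and corrupt the reading of the bound-state data. Once this is in place, the remainder is bookkeeping layered on top of Proposition \ref{prop7.2} and the classical IST.
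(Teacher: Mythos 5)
Your proposal is correct and follows essentially the same route as the paper: the paper's (implicit) proof is precisely the paragraph preceding the corollary, which argues via Proposition \ref{prop7.2} that for $a>>1$ the poles $\{i\kappa_n\}$ and residues $ic_n$ of $A_a$ recover the bound state data, making $\{R,A_a\}$ equivalent to the classical scattering data, and then computes $r_a$ from $y(\cdot,a)$ (which depends only on $\left.q\right\vert_{\mathbb{R}_+}$) to obtain $A_a=R-r_a\xi_a^{-1}$. Your only addition is to make explicit the role of Corollary \ref{Corol of Faddeev} in excluding spurious poles of $A_a$, which the paper uses in the same way inside the proof of Proposition \ref{prop7.2}.
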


\begin{remark}
Proposition \ref{prop7.2} is totally elementary but will nevertheless play a
principal role in our considerations. It is worth mentioning that Corollary
\ref{Partion info} (which we don't actually use) immediately implies many
relevant results of \cite{Akt96}, \cite{AKM93}, \cite{BSL95}, \cite{GW95},
\cite{RS94} on the so-called inverse problems with partial information on the
potential (see also \cite{GS97} in this context).
\end{remark}

\subsection{The general step-like case}

Through this subsection we deal with potentials subject to Hypothesis
\ref{hyp1.1}. We start with a brief review of Titchmarsh-Weyl theory of order
two differential operators in dimension one (see, e.g. \cite{TeschlBOOK}).

A real-valued locally integrable potential $q$ is said to be Weyl limit point
at $\pm\infty$ if the equation $\mathbb{L}_{q}u=\lambda u$ has a unique (up to
a multiplicative constant) solution\footnote{Note that Weyl solutions depend
on the spectral parameter (energy) $\lambda$ while Jost solutions are
typically considered as dependent on momentum $\sqrt{\lambda}.$} $\Psi_{\pm
}(\cdot,\lambda)\in L^{2}(a,\pm\infty)$ for each $\lambda\in\mathbb{C}^{+}$.
Such $\Psi_{\pm}$ is commonly called the Weyl solution on $(a,\pm\infty)$. The
existence of $\Psi_{\pm}$ is directly related to the selfadjointness of
$\mathbb{L}_{q}$ on $L^{2}(a,\pm\infty)$ with a Dirichlet (or any other
selfadjoint) condition at $x=a\pm0$. If $q\in L^{1}$ then the Weyl solutions
$\Psi_{\pm}(x,\lambda)\sim e^{\pm i\sqrt{\lambda}x},\;x\rightarrow\pm\infty$,
clearly turn into Jost and we have
\[
\Psi_{\pm}\left(  x,k^{2}\right)  =\psi_{\pm}\left(  x,k\right)  .
\]
However Weyl solutions exist under much more general conditions on $q$'s and
no decay of any kind is required. There is no criterion for the limit point
case in terms of $q$ (a major unsolved problem) but there are a number of
sufficient conditions which are typically satisfied in most of realistic
situations. For instance, any $q$ subject to Hypothesis \ref{hyp1.1} is in the
limit point case at $\pm\infty$.

The following concept is fundamental in spectral theory of ordinary
differential operators.

\begin{definition}
[$m$-function]\label{def1} The\ function%
\begin{equation}
m_{\pm}\left(  \lambda,x\right)  =\pm\frac{\partial_{x}\Psi_{\pm}\left(
x,\lambda\right)  }{\Psi_{\pm}\left(  x,\lambda\right)  },\ \ \lambda
\in\mathbb{C}_{+} \label{m-funct}%
\end{equation}
is called the (Dirichlet, principal) Titchmarsh-Weyl $m-$function, or just $m
$-function.
\end{definition}

By definition $m_{\pm}\left(  \lambda,x\right)  $ depends on two variables
$\left(  \lambda,x\right)  $. The first one, energy, is the main variable. The
other one is typically set $x=0$ with the convention $m_{\pm}\left(
\lambda,0\right)  =m_{\pm}\left(  \lambda\right)  $. It is well-known that
$m_{\pm}\left(  \cdot,x\right)  $ is a Herglotz function. That is, it is
analytic and maps $\mathbb{C}^{+}$ to $\mathbb{C}^{+}$. The following general
statement \cite{ArDon1957} is frequently used in spectral theory of ordinary
differential operators.

\begin{theorem}
[Aronszajn-Donoghue, 1957]\label{Heglotz rep}Let \bigskip$f(\lambda)$ be a
Herglotz function. Then there exists a non-negative measure $d\mu$ such that
\begin{equation}
f(\lambda)=a+b\lambda+\int\frac{1+\lambda s}{s-\lambda}\frac{d\mu(s)}{1+s^{2}%
}, \label{Herglotz}%
\end{equation}
where
\[
a=\operatorname{Re}f\left(  i\right)  ,\ b\geq0,\
{\displaystyle\int}
\dfrac{d\mu(s)}{1+s^{2}}<\infty.
\]
Moreover, $\mu\,\ $is computed by the Herglotz inversion formula%
\begin{equation}
\mu(\Delta)=\lim_{\varepsilon\rightarrow0+}\frac{1}{\pi}\int_{\Delta
}\operatorname{Im}f\left(  s+i\varepsilon\right)  ds.
\label{Herglotz inversion}%
\end{equation}

\end{theorem}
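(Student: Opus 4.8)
The plan is to reduce everything to the classical Riesz--Herglotz representation of functions with non-negative real part on the unit disk and transport it to $\mathbb{C}^{+}$ by a Cayley transform, the whole content being carried by $\operatorname{Im}f$, which is a positive harmonic function on $\mathbb{C}^{+}$.

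First I would record the one computation that makes the stated kernel the correct one. Writing $\lambda=x+iy$ with $y>0$, a direct calculation gives
\[
\operatorname{Im}\frac{1+\lambda s}{s-\lambda}=\frac{y\,(1+s^{2})}{(s-x)^{2}+y^{2}},
\]
so that for any non-negative measure with $\int d\mu(s)/(1+s^{2})<\infty$ the function $g(\lambda):=b\lambda+\int\frac{1+\lambda s}{s-\lambda}\frac{d\mu(s)}{1+s^{2}}$ is analytic in $\mathbb{C}^{+}$ with
\[
\operatorname{Im}g(x+iy)=by+\int\frac{y}{(s-x)^{2}+y^{2}}\,d\mu(s),
\]
the Poisson integral of $d\mu$ plus the linear term $by$. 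The factor $1+s^{2}$ in the denominator is precisely what the regularized kernel supplies to force convergence under the constraint $\int d\mu/(1+s^{2})<\infty$. I would also note that $\operatorname{Re}g(i)=0$, since the kernel has vanishing real part at $\lambda=i$ and $b\cdot i$ is purely imaginary.

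Next I would produce the measure. Since $f:\mathbb{C}^{+}\to\mathbb{C}^{+}$, the function $-if$ has non-negative real part, so its pullback $H(w)=-if\!\left(i\tfrac{1+w}{1-w}\right)$ under the Cayley map $\mathbb{D}\to\mathbb{C}^{+}$ has non-negative real part on $\mathbb{D}$, and the disk Riesz--Herglotz theorem applies: $H(w)=ic'+\int_{\partial\mathbb{D}}\frac{e^{i\theta}+w}{e^{i\theta}-w}\,d\sigma(\theta)$ for a finite $\sigma\ge0$ and $c'\in\mathbb{R}$. Pushing this back to $\mathbb{C}^{+}$, the boundary point $w=1$ corresponds to $\lambda=\infty$; an atom of $\sigma$ there produces exactly the linear term $b\lambda$ with $b=\sigma(\{0\})\ge0$ (one checks $\tfrac{1+w}{1-w}$ pulls back to $-i\lambda$), while the image of the remaining mass under $\theta\mapsto s$ is the measure $d\mu$, from which $\int d\mu/(1+s^{2})<\infty$ is read off. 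This realizes $\operatorname{Im}f$ in the Poisson-plus-linear form above. Comparing with the $g$ built in the first step, $f-g$ is analytic on $\mathbb{C}^{+}$ with identically vanishing imaginary part, hence equals a real constant $a$; evaluating at $\lambda=i$ and using $\operatorname{Re}g(i)=0$ gives $a=\operatorname{Re}f(i)$, which is (\ref{Herglotz}).

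Finally, for the inversion formula I would use that $\tfrac{1}{\pi}\operatorname{Im}f(s+i\varepsilon)=\tfrac{1}{\pi}\bigl(b\varepsilon+(P_{\varepsilon}*\mu)(s)\bigr)$ is, up to the vanishing term $b\varepsilon$, the Poisson integral $P_{\varepsilon}*\mu$, an approximate identity; hence for any bounded interval $\Delta$ whose endpoints carry no $\mu$-mass, $\tfrac{1}{\pi}\int_{\Delta}\operatorname{Im}f(s+i\varepsilon)\,ds\to\mu(\Delta)$ as $\varepsilon\to0+$, which is (\ref{Herglotz inversion}). The main obstacle is the clean transfer of the disk representation to the line, specifically isolating the boundary atom at the point mapping to infinity as the linear term $by$ and verifying the weight $\int d\mu/(1+s^{2})<\infty$; once the kernel identity of the first step is in hand, the identification of $a$, $b$ and the inversion formula are routine approximate-identity bookkeeping.
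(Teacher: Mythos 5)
Your proof is correct, but note that the paper itself offers no proof of this statement: it is quoted as a classical result with a citation to Aronszajn--Donoghue (1957), so there is no internal argument to compare against. What you give is the standard textbook derivation, and it is sound. The kernel identity $\operatorname{Im}\frac{1+\lambda s}{s-\lambda}=\frac{y(1+s^{2})}{(s-x)^{2}+y^{2}}$ is verified by a one-line computation, the Cayley-transform reduction to the disk Riesz--Herglotz theorem is the classical route, and your identification of the boundary atom at $w=1$ (your ``$\sigma(\{0\})$'' should be read as the mass at $\theta=0$, i.e.\ at the point $e^{i\theta}=1$) with the linear term is right: since $\frac{1+w}{1-w}=-i\lambda$ and $f=iH$, that atom contributes exactly $b\lambda$ with $b\geq 0$, while the pushforward $d\tilde\sigma$ of the remaining mass gives $d\mu=(1+s^{2})\,d\tilde\sigma$, whence $\int d\mu/(1+s^{2})=\tilde\sigma(\mathbb{R})<\infty$ automatically. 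Your comparison step is also fine: $f-g$ is analytic on the connected domain $\mathbb{C}^{+}$ with identically vanishing imaginary part, hence a real constant, equal to $\operatorname{Re}f(i)$ because the kernel equals $i$ at $\lambda=i$. Two small points of bookkeeping deserve to be made explicit. First, the inversion formula as stated in the paper holds only for intervals $\Delta$ with $\mu(\partial\Delta)=0$ (or with the half-mass convention at endpoint atoms); you flag this correctly, and the paper's formulation is silently assuming it. Second, since $\mu$ may be an infinite measure, the dominated-convergence step in the approximate-identity argument needs a dominating function: for fixed bounded $\Delta$ and $0<\varepsilon\leq 1$ one has $\frac{1}{\pi}\int_{\Delta}\frac{\varepsilon\,ds}{(s-u)^{2}+\varepsilon^{2}}\lesssim_{\Delta}\min\bigl(1,(1+u^{2})^{-1}\bigr)$, which is $\mu$-integrable by the growth condition; with that observed, your ``routine approximate-identity bookkeeping'' closes without gaps.
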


The formula (\ref{Herglotz}) is called the Herglotz or Riesz-Herglotz
representation. It is straightforward to derive from Theorem \ref{Heglotz rep}
the following

\begin{corollary}
\label{Hergtotz rep 2}If \bigskip$f(\lambda)$ is Herglotz and $f\left(
\lambda\right)  \rightarrow0$\ as $\lambda\rightarrow\infty$ along any ray
$0<\varepsilon<\arg\lambda<\pi-\varepsilon,$ and the support of $\mu$ in
(\ref{Herglotz}) is bounded from below then (\ref{Herglotz}) reads%
\begin{equation}
f(\lambda)=\int\frac{d\mu(s)}{s-\lambda}, \label{Herglotz 2}%
\end{equation}
where the measure $\mu$ is subject to $%
{\displaystyle\int}
\dfrac{\left\vert s\right\vert \ d\mu(s)}{1+s^{2}}<\infty.$
\end{corollary}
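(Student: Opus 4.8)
The plan is to start from the Herglotz representation (\ref{Herglotz}) furnished by Theorem \ref{Heglotz rep} and to extract three simplifications: $b=0$, the convergence of $\int |s|(1+s^2)^{-1}\,d\mu(s)$, and the evaluation of the constant $a$. The algebraic engine is the elementary partial-fraction identity
\[
\frac{1+\lambda s}{s-\lambda}\,\frac{1}{1+s^2}=\frac{1}{s-\lambda}-\frac{s}{1+s^2},
\]
valid for all $s\in\mathbb{R}$ and $\lambda\in\mathbb{C}^{+}$ (clear denominators: $\tfrac{1+s^2}{s-\lambda}-s=\tfrac{1+\lambda s}{s-\lambda}$). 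Once the two resulting integrals are known to converge separately, (\ref{Herglotz}) collapses to $f(\lambda)=a-\int \tfrac{s}{1+s^2}\,d\mu(s)+\int \tfrac{d\mu(s)}{s-\lambda}$, so the whole corollary reduces to proving $b=0$ and $a=\int \tfrac{s}{1+s^2}\,d\mu(s)$.

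First I would dispose of $b$. Evaluating along the ray $\arg\lambda=\pi/2$, i.e. $\lambda=iy$ with $y\to+\infty$, I would show $f(iy)/(iy)\to b$ by dominated convergence applied to the integral in (\ref{Herglotz}): the bound $s^2+y^2\ge 2|s|y$ dominates the relevant integrand by the $\mu$-integrable function $\tfrac12(1+s^2)^{-1}$, while the integrand tends to $0$ pointwise. Since $f(iy)\to0$ by hypothesis, $f(iy)/(iy)\to0$, and because $b\ge0$ this forces $b=0$.

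The heart of the argument is the real part. With $b=0$ and $\lambda=iy$ one computes $\operatorname{Re}f(iy)=a+\int \tfrac{s(1-y^2)}{(s^2+y^2)(1+s^2)}\,d\mu(s)$. Here the hypothesis $\operatorname{supp}\mu\subset[c,\infty)$ is decisive: on the tail $s\ge0$ (and $y>1$) the integrand is nonpositive and decreases monotonically to $-s/(1+s^2)$ as $y\to\infty$, so monotone convergence applies; on the compact piece $[c,0]$ the integrand is uniformly bounded against $\mu$ restricted there (a finite measure, since $\int(1+s^2)^{-1}d\mu<\infty$), so dominated convergence applies. Hence $\operatorname{Re}f(iy)\to a-\int \tfrac{s}{1+s^2}\,d\mu(s)$, where a priori the integral could be $+\infty$; but $\operatorname{Re}f(iy)\to0$ forces the limit to be finite, which \emph{simultaneously} establishes $\int |s|(1+s^2)^{-1}\,d\mu(s)<\infty$ and yields $a=\int \tfrac{s}{1+s^2}\,d\mu(s)$.

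Finally I would justify the split. The integrability just obtained, together with $\operatorname{supp}\mu\subset[c,\infty)$, gives $|s-\lambda|^{-1}\lesssim_{\lambda} s(1+s^2)^{-1}$ for large $s$ (as $s(1+s^2)^{-1}\ge \tfrac1{2s}$ for $s\ge1$), whence $\int |s-\lambda|^{-1}\,d\mu(s)<\infty$; thus both integrals in the partial-fraction expansion converge absolutely, the split is legitimate, and substituting $a=\int \tfrac{s}{1+s^2}\,d\mu$ leaves exactly $f(\lambda)=\int \tfrac{d\mu(s)}{s-\lambda}$. I expect the main obstacle to be precisely the integrability step, since $\int(1+s^2)^{-1}\,d\mu<\infty$ does \emph{not} by itself imply $\int |s|(1+s^2)^{-1}\,d\mu<\infty$; it is only the decay of $f$ combined with the one-sided support of $\mu$ (which rules out cancellation and enables monotone convergence) that closes this gap.
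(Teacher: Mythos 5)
Your proof is correct, and since the paper gives no argument for this corollary (it is merely stated as straightforward to derive from Theorem \ref{Heglotz rep}), yours is exactly the standard derivation the authors intend: the partial-fraction identity $\frac{1+\lambda s}{(s-\lambda)(1+s^2)}=\frac{1}{s-\lambda}-\frac{s}{1+s^2}$, the elimination of $b$ via $f(iy)/(iy)\to 0$, and --- the one genuinely non-trivial point, which you identify correctly --- the use of the one-sided support of $\mu$ to run monotone convergence on $\operatorname{Re}f(iy)$, which simultaneously forces $\int |s|(1+s^2)^{-1}\,d\mu(s)<\infty$ and evaluates $a=\int s(1+s^2)^{-1}\,d\mu(s)$, thereby legitimizing the split. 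Nothing is missing.
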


\bigskip Titchmarsh-Weyl $m-$functions have many important properties. E.g.
the classical Borg-Marchenko result says that the $m-$function determines the
potential uniquely. This however is immaterial to us as apposed to the
following convergence property which we state only for $m=m_{+}$.

\begin{proposition}
\label{convergence property} Let $q,q_{n}$ be in the limit point case at
$+\infty$ and suppose that $q_{n}\rightarrow q$ in $L_{\operatorname*{loc}%
}^{1}$. I.e. for any finite interval $I$
\[
\int_{I}\left\vert q-q_{n}\right\vert \rightarrow0,\ n\rightarrow\infty.
\]
Then
\[
m_{n}\rightrightarrows m,\ \ \ n\rightarrow\infty,\text{ in }\mathbb{C}^{+}%
\]
and hence for the associated spectral measures of the half-line Dirichlet
Schr\"{o}dinger operators one has
\[
\mu_{n}\rightarrow\mu,\ \ \ n\rightarrow\infty,\text{ \ \ weakly.}%
\]

\end{proposition}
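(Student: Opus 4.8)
The plan is to exploit the defining ODE characterization of the Weyl solution $\Psi_+$ together with the limit-point uniqueness, and to propagate $L^1_{\operatorname{loc}}$ convergence of the potentials through the standard solution operator of the Schr\"{o}dinger equation. First I would fix $\lambda \in \mathbb{C}^+$ and normalize the Weyl solutions at a point $x=a$, say $\Psi_\pm(a,\lambda)=1$, so that $m_\pm(\lambda,a)=\pm \partial_x\Psi_\pm(a,\lambda)$. The key structural fact I would invoke is that the Weyl solution is the unique (up to constants) $L^2(a,+\infty)$ solution, and that $m_+(\lambda,a)$ is characterized variationally/analytically as the unique value $m$ for which the solution of the initial value problem $\mathbb{L}_q u=\lambda u$, $u(a)=1$, $u'(a)=m$, lies in $L^2(a,+\infty)$. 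Since $\operatorname{Im}\lambda>0$, this $m$ is also obtainable as a limit of finite-interval (truncated) $m$-functions $m_+^{(b)}(\lambda,a)$ as $b\to+\infty$, where $m_+^{(b)}$ corresponds to imposing a Dirichlet condition at $x=b$; the limit-point condition guarantees this limit exists and is independent of the boundary condition chosen at $b$.

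The main step is a two-stage approximation. For the finite-interval problem on $(a,b)$, the map $q \mapsto m_+^{(b)}(\lambda,a)$ is continuous with respect to $L^1(a,b)$ convergence of the potential: solving $\mathbb{L}_q u=\lambda u$ by converting to the Volterra integral equation and iterating the Neumann series (exactly as sketched after Theorem \ref{th6.1}) shows that $u$, $u'$, and hence the ratio defining $m_+^{(b)}$, depend continuously on $q$ in $L^1(a,b)$ for each fixed compact parameter range. Because $q_n\to q$ in $L^1_{\operatorname{loc}}$, we get $m_n^{(b)}(\lambda,a)\to m^{(b)}(\lambda,a)$ for each fixed $b$. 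Then I would let $b\to\infty$ and interchange the two limits. This interchange is where the real work lies, and it is what I expect to be the main obstacle: one must show the convergence $m_n^{(b)}\to m_n$ as $b\to\infty$ is \emph{uniform in $n$} on compact subsets of $\mathbb{C}^+$. The standard tool here is the Weyl disk/nested-disk estimate: the truncated $m$-functions $m_+^{(b)}(\lambda,a)$ lie in Weyl disks whose radii shrink at a rate controlled by $\bigl(\operatorname{Im}\lambda\, \int_a^b |\Psi_+^{(b)}|^2\bigr)^{-1}$, and one needs a lower bound on this $L^2$ mass that is uniform over the family $\{q_n\}$ on compacta. The hypothesis that all $q_n$ and $q$ are limit point, combined with the $L^1_{\operatorname{loc}}$ convergence (which gives uniform local control of the coefficients), should furnish such a uniform Weyl-disk radius estimate.

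Once the double limit is justified, pointwise convergence $m_n(\lambda)\to m(\lambda)$ on $\mathbb{C}^+$ follows. To upgrade pointwise to locally uniform convergence $m_n\rightrightarrows m$, I would invoke normal families: each $m_n$ is Herglotz, hence the family $\{m_n\}$ is locally bounded and equicontinuous on compact subsets of $\mathbb{C}^+$ (a Herglotz function mapping $\mathbb{C}^+\to\mathbb{C}^+$ satisfies uniform Harnack-type bounds on compacta), so pointwise convergence automatically improves to uniform convergence on compacta by Vitali's theorem. Finally, the weak convergence $\mu_n\to\mu$ of the associated spectral measures is a direct consequence: by the Herglotz representation (Theorem \ref{Heglotz rep}) and the Herglotz inversion formula \eqref{Herglotz inversion}, locally uniform convergence of Herglotz functions on $\mathbb{C}^+$ is equivalent to weak convergence of their representing measures, which is the classical companion statement to Vitali's theorem for the Herglotz/Nevanlinna class. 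I would simply cite this standard equivalence to close the argument.
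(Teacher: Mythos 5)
Your proposal is sound, but there is nothing in the paper to compare it against step by step: the authors do not prove Proposition \ref{convergence property} at all. They state it, remark that they are ``not sure whom to attribute this statement,'' cite Carmona--Lacroix \cite{CarLac90} and Pavlov--Smirnov \cite{PavSmirn82}, and note only that ``its proof rests on original ideas behind the limit point/limit circle classification.'' Your sketch supplies exactly that classical argument: finite-interval $m$-functions lying in nested Weyl disks, continuity of the truncated problem in $L^{1}(a,b)$ via the Volterra/Gronwall estimates (the same mechanism the paper invokes after Theorem \ref{th6.1}), shrinking disk radii from the limit-point hypothesis, Harnack/Vitali to upgrade pointwise to locally uniform convergence of the Herglotz functions, and the standard Grommer--Hamburger-type equivalence, via the inversion formula (\ref{Herglotz inversion}), between locally uniform convergence of Herglotz functions and weak convergence of their representing measures. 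Your observation that only $L_{\operatorname*{loc}}^{1}$ control of the potentials is needed also matches the paper's footnote that \cite{CarLac90} state the lemma for $L_{\operatorname*{loc}}^{2}$ while ``the actual proof needs $L_{\operatorname*{loc}}^{1}$.''

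Two refinements to your sketch. First, the step you single out as ``the main obstacle'' --- making $m_{n}^{(b)}\rightarrow m_{n}$ uniform in $n$ --- is not actually needed, because the nested-disk structure does the work: $m_{n}(\lambda)$ lies in the truncated Weyl disk $D_{b}^{(n)}(\lambda)$ for \emph{every} $b$, so it suffices to fix one $b$ with radius $r_{b}(\lambda)<\varepsilon$ (the limit-point property of the limiting $q$ alone provides this) and then use $L^{1}(a,b)$-continuity of the center and radius of $D_{b}^{(n)}$ --- both explicit rational expressions in the boundary values at $b$ of the fundamental solutions --- to get $\left\vert m_{n}(\lambda)-m(\lambda)\right\vert \leq r_{b}^{(n)}+\left\vert c_{b}^{(n)}-c_{b}\right\vert +r_{b}<4\varepsilon$ for large $n$; in particular the limit-point assumption on the $q_{n}$ serves only to make $m_{n}$ well defined, and no uniform-over-$\{q_{n}\}$ radius estimate is required. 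Second, minor bookkeeping: the standard radius formula is $r_{b}=\left(  2\operatorname{Im}\lambda\int_{a}^{b}\left\vert u_{2}\right\vert ^{2}\right)  ^{-1}$ with $u_{2}(a)=0$, $u_{2}^{\prime}(a)=1$, rather than involving $\Psi_{+}^{(b)}$; and since the spectral measures are infinite (only $\int d\mu_{n}(s)/(1+s^{2})<\infty$), the conclusion $\mu_{n}\rightarrow\mu$ ``weakly'' must be read as vague convergence on bounded intervals whose endpoints carry no mass, which your cited equivalence delivers once one notes that the linear coefficients in the Herglotz representations vanish for Dirichlet $m$-functions ($m_{n}(iy)=o(y)$ as $y\rightarrow\infty$).
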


We are not sure whom to attribute this statement. It appears in
\cite{CarLac90} as a part of a lemma\footnote{Stated there for
$L_{\operatorname*{loc}}^{2}$ as it was enough for the future purposes. The
actual proof needs $L_{\operatorname*{loc}}^{1}$.}. We learned it first from
\cite{PavSmirn82} but it may have been known much earlier as its proof rests
on original ideas behind the limit point/limit circle classification.

The main convenience of $m$-function in our setting is that classical
scattering theory can be extended far beyond strong decay assumptions at
$\pm\infty$ if Jost solutions are suitably replaced with Weyl \cite{GNP97}.
For instance, one can formally define transmission and reflection coefficients
merely by (\ref{eq6.8})-(\ref{eq6.10}). Such generalizations, however, need
not have suitable properties which could be a real problem.

Let us introduce now the right reflection coefficient for potentials subject
to Hypothesis \ref{hyp1.1}. Since $W\left(  \psi_{+},\overline{\psi_{+}%
}\right)  =-2ik$ the pair $\{\psi_{+},\overline{\psi_{+}}\}$ forms a
fundamental set for $\mathbb{L}_{q}u=k^{2}u$ and hence the Weyl solution
$\Psi_{-}$ is a linear combination of $\{\psi_{+},\overline{\psi_{+}}\}$. I.e.
for any real $k\neq0$%
\begin{equation}
T(k)\Psi_{-}(x,k^{2})=\overline{\psi_{+}(x,k)}+R(k)\psi_{+}(x,k),
\label{eq8.1}%
\end{equation}
holds with some $T$ and $R$. In analogy with (\ref{basic scatt identity}) we
call (\ref{eq8.1}) the (right) basic scattering relation and similarly to
(\ref{eq6.8}) we introduce

\begin{definition}
[Reflection coefficient]\label{def R}We call
\begin{equation}
R\left(  k\right)  =\frac{W(\overline{\psi_{+}}\left(  \cdot,k\right)
,\Psi_{-}\left(  \cdot,k^{2}\right)  )}{W(\Psi_{-}\left(  \cdot,k^{2}\right)
,\psi_{+}\left(  \cdot,k\right)  )} \label{eq8.2}%
\end{equation}
the (right) reflection coefficient.
\end{definition}

Observe that since $W(\Psi_{-},\psi_{+})$ is analytic in $\mathbb{C}^{+}$ away
from $k^{2}\in\operatorname*{Spec}(\mathbb{L}_{q})\cap\mathbb{R}_{-}$, the
denominator $W(\Psi_{-},\psi_{+})$ in (\ref{eq8.2}) cannot vanish on a set of
positive Lebesgue measure. Therefore $R$ is well defined by (\ref{eq8.2}) for
a.e. $k\in\mathbb{R}$. Similarly, $T$ is also well-defined by
\[
T\left(  k\right)  =\frac{2ik}{W(\Psi_{-}\left(  \cdot,k^{2}\right)  ,\psi
_{+}\left(  \cdot,k\right)  )}.
\]

\begin{proposition}
[Properties of the reflection coefficient]\label{props of R}The reflection
coefficient $R$ is symmetric $R\left(  -k\right)  =\overline{R\left(
k\right)  }$ and contractive $\left\vert R\left(  k\right)  \right\vert \leq1$
a.e. Moreover, if $\sigma\left(  \mathbb{L}_{q}\right)  $ is the minimal
support of the two fold a.c. spectrum of $\mathbb{L}_{q}$ then $\left\vert
R\left(  k\right)  \right\vert <1$ for a.e. real $k$ such that $k^{2}\in$
$\sigma\left(  \mathbb{L}_{q}\right)  $ and $\left\vert R(k)\right\vert =1$ otherwise.
\end{proposition}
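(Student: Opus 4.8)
The plan is to collapse the whole statement onto the two half-line Titchmarsh--Weyl $m$-functions $m_\pm(\cdot,a)$ attached at one base point $a$, and then read off all three assertions from the Herglotz positivity $\operatorname{Im}m_\pm(k^2+i0,a)\ge0$ recorded after Definition \ref{def1}. First I would rewrite the Wronskians in (\ref{eq8.2}) in terms of $m_\pm$. Under Hypothesis \ref{hyp1.1} the decay at $+\infty$ makes $\psi_+$ the Weyl solution there, so $\psi_+'(a,k)=m_+\psi_+(a,k)$, while the defining relation (\ref{m-funct}) gives $\partial_x\Psi_-(a)=-m_-\Psi_-(a)$, with $m_\pm:=m_\pm(k^2+i0,a)$. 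Evaluating the $x$-independent Wronskians at $x=a$ yields
\[
W(\Psi_-,\psi_+)=\Psi_-(a)\psi_+(a)\,(m_++m_-),\qquad W(\overline{\psi_+},\Psi_-)=-\Psi_-(a)\overline{\psi_+}(a)\,(\overline{m_+}+m_-),
\]
and hence the compact representation
\[
R(k)=-\frac{\overline{\psi_+(a,k)}}{\psi_+(a,k)}\cdot\frac{\overline{m_+}+m_-}{m_++m_-},
\]
valid for a.e.\ real $k$ since the denominator is $W(\Psi_-,\psi_+)\ne0$ a.e., as already observed after Definition \ref{def R}.

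For the symmetry $R(-k)=\overline{R(k)}$ I would argue straight from (\ref{eq8.2}) using reality of $q$: we have $\psi_+(\cdot,-k)=\overline{\psi_+(\cdot,k)}$ by (\ref{eq6.5}), and $\overline{\Psi_-(\cdot,\lambda)}=\Psi_-(\cdot,\overline{\lambda})$ because $q$ is real and the Weyl solution is unique up to a constant. Conjugating numerator and denominator and invoking the antisymmetry of the Wronskian turns $R(-k)$ into $\overline{R(k)}$. The one delicate point is the boundary-value convention: for $k>0$ the physical solution $\psi_+(\cdot,k)$ is the limit of $\psi_+(\cdot,k+i\varepsilon)$, i.e.\ $\lambda=k^2+i0$, whereas for $k<0$ the same limit approaches $\lambda=k^2-i0$; once $\Psi_-(\cdot,k^2)$ is read with the matching $\operatorname{sgn}(k)$ boundary value, the conjugation closes up exactly.

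Contractivity then drops out of the representation. Since $|\overline{\psi_+}/\psi_+|=1$, we have $|R(k)|=|(\overline{m_+}+m_-)/(m_++m_-)|$; writing $m_+=\alpha+i\beta$, $m_-=\gamma+i\delta$ with $\beta,\delta\ge0$, a one-line computation gives
\[
|R(k)|^2=\frac{(\alpha+\gamma)^2+(\delta-\beta)^2}{(\alpha+\gamma)^2+(\delta+\beta)^2},
\]
and $(\delta+\beta)^2-(\delta-\beta)^2=4\beta\delta\ge0$ yields $|R(k)|\le1$ a.e., with equality precisely when $\beta\delta=0$, that is, when $\operatorname{Im}m_+(k^2+i0,a)=0$ or $\operatorname{Im}m_-(k^2+i0,a)=0$. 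So $|R(k)|<1$ exactly where both $m$-functions have strictly positive imaginary boundary value.

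The last step is to identify $\{\beta\delta>0\}$ with $\{k^2\in\sigma(\mathbb{L}_q)\}$, and this I expect to be the main obstacle, since it is the only place where genuine spectral theory (not anything KdV-specific) enters. Here I would realize $\mathbb{L}_q$ as the coupling at $a$ of the two half-line operators and use the multiplicity theory of the a.c.\ spectrum: the a.c.\ part of the $2\times2$ spectral matrix has rank $2$ exactly on $\{E:\operatorname{Im}m_+(E+i0,a)>0\text{ and }\operatorname{Im}m_-(E+i0,a)>0\}$ and rank $\le1$ elsewhere, modulo null sets. Thus $\sigma(\mathbb{L}_q)$ coincides up to measure zero with that set, which converts $\beta\delta>0$ into $k^2\in\sigma(\mathbb{L}_q)$ and gives $|R(k)|<1$ there and $|R(k)|=1$ for a.e.\ $k$ otherwise. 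Care is needed only in discarding the null sets where the denominator vanishes or where the $m_\pm$ fail to admit finite boundary values.
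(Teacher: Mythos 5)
Your proposal is correct, and its backbone is the same as the paper's: reduce everything to the Herglotz positivity $\operatorname{Im}m_{\pm}(k^{2}+i0,a)\geq0$ via Wronskian evaluations, then identify the strict-inequality set with the support of the two-fold a.c.\ spectrum. The packaging differs in an instructive way, so a comparison is worth recording. The paper applies the quadratic (Pl\"ucker) Wronskian identity $W_{12}W_{34}+W_{13}W_{42}+W_{14}W_{23}=0$ to the quadruple $\{\psi_{+},\overline{\psi_{+}},\Psi_{-},\overline{\Psi_{-}}\}$ and lands directly on (\ref{eq8.4})--(\ref{eq8.5}), an identity valid at \emph{every} $x$ in which only the left $m$-function appears, $1-|R(k)|^{2}=\frac{\operatorname{Im}m_{-}(k^{2}+i0,x)}{|k|}\left\vert T(k)\Psi_{-}(x,k^{2}+i0)\right\vert^{2}$, the right half-line having been absorbed into $W(\psi_{+},\overline{\psi_{+}})=-2ik$; it then concludes by noting that $\sigma(\mathbb{L}_{q})$ is carried exactly by $\{\operatorname{Im}m_{-}>0\}$, implicitly using that $\operatorname{Im}m_{+}>0$ a.e.\ on $\mathbb{R}_{+}$ under Hypothesis \ref{hyp1.1}. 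You instead evaluate the two defining Wronskians at the base point to get the symmetric M\"obius formula $R=-\frac{\overline{\psi_{+}(a,k)}}{\psi_{+}(a,k)}\cdot\frac{\overline{m_{+}}+m_{-}}{m_{+}+m_{-}}$ (which is correct: it is equivalent to the split formula (\ref{A1}) plus $r_{a}$, as one checks using $\operatorname{Im}m_{+}(k^{2}+i0,a)=|k|/|\psi_{+}(a,k)|^{2}$), so that $1-|R|^{2}=\frac{4\beta\delta}{(\alpha+\gamma)^{2}+(\beta+\delta)^{2}}$ with equality set $\{\beta\delta=0\}$; the same relation $\beta=|k|/|\psi_{+}(a,k)|^{2}$ reconciles your two-sided numerator $4\beta\delta$ with the paper's $m_{-}$-only right-hand side. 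Your route buys a cleaner final step --- the rank-$2$ characterization of the a.c.\ spectral matrix gives $\{\beta>0\}\cap\{\delta>0\}$ in one stroke, without separately invoking $\operatorname{Im}m_{+}>0$ a.e.\ on $\mathbb{R}_{+}$ --- and, more substantively, an actual treatment of the symmetry $R(-k)=\overline{R(k)}$ together with the $\operatorname{sgn}(k)$ boundary-value convention for $\Psi_{-}$, both of which the paper's proof passes over in silence; the paper's route buys an identity at arbitrary $x$ depending on $m_{-}$ alone, which is the form reused afterwards (e.g.\ in the step-potential discussion following the proposition). Your closing caveat about discarding the null sets where $m_{+}+m_{-}$ vanishes or the boundary values fail to exist is exactly the right bookkeeping and matches the paper's remark, after Definition \ref{def R}, that $W(\Psi_{-},\psi_{+})$ cannot vanish on a set of positive Lebesgue measure.
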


\begin{proof}
From (\ref{eq8.1})-(\ref{eq8.1}) and the Wronskian identity
\[
W_{12}W_{34}+W_{13}W_{42}+W_{14}W_{23}=0,\;W_{ik}:=W(f_{i},f_{k}),
\]
omitting a straightforward computation, we have%
\begin{equation}
\left\vert R(k)\right\vert ^{2}+\frac{\left\vert T(k)\right\vert ^{2}}%
{2ik}W\left(  \Psi_{-},\overline{\Psi_{-}}\right)  \left(  x,k^{2}+i0\right)
=1 \label{eq8.4}%
\end{equation}
for any $x\in\mathbb{R}$ and a.e. $k\in\mathbb{R}$ or equivalently
\begin{equation}
\left\vert R(k)\right\vert ^{2}+\frac{\operatorname{Im}m_{-}(k^{2}%
+i0,x)}{\left\vert k\right\vert }\cdot\left\vert T(k)\Psi_{-}(x,k^{2}%
+i0)\right\vert ^{2}=1. \label{eq8.5}%
\end{equation}
It remains to notice that $\sigma\left(  \mathbb{L}_{q}\right)  $ coincides
with the closure of $\left\{  \operatorname{Im}m_{-}(t+i0,x)>0\right\}  $.
\end{proof}

If $\Psi_{-}$ is the Jost solution (e.g. the classical case) then $W\left(
\psi_{-}(\cdot,k),\overline{\psi_{-}(\cdot,k)}\right)  =2ik$ and (\ref{eq8.4})
turns into $\left\vert R\left(  k\right)  \right\vert ^{2}+\left\vert T\left(
k\right)  \right\vert ^{2}=1$ as one would expect.

Consider the important case of $q(x)\rightarrow-h^{2},\;x\rightarrow-\infty$,
sufficiently fast. Then for any $k\in\mathbb{R}$%
\[
\Psi_{-}(x,k)=e^{-i\sqrt{k^{2}+h^{2}}x}+o(1),\;x\rightarrow-\infty,
\]

\[
\partial_{x}\Psi_{-}(x,k)+i\sqrt{k^{2}+h^{2}}\,\Psi_{-}%
(x,k)=o(1),\;x\rightarrow-\infty,
\]
and (\ref{eq8.4}) implies
\[
\left\vert R(k)\right\vert ^{2}+\sqrt{1+k^{2}/h^{2}}\left\vert T(k)\right\vert
^{2}=1.
\]

\begin{remark}
As opposed to the classical case, $\{\Psi_{-},\overline{\Psi_{-}}\}$ could be
linearly dependent. In fact, $\{\Psi_{-},\overline{\Psi_{-}}\}$ are linearly
dependent on the support of the set $\operatorname{Im}m_{-}(k^{2}+i0,x)=0$.
The latter may occur, e.g., if $q$ approaches $+\infty$ at $-\infty$ or for
bounded $q$ without a specific pattern of behavior at $-\infty$ (e.g. the
Gaussian white noise). The left basic scattering identity is then undefined
for any $k$ but the right one remains defined. For a fairly complete
description of different spectral regimes we refer to \cite{GNP97}.
\end{remark}

Here is the main statement of this section, which will be crucially used in
the analysis of our Hankel operator.

\begin{proposition}
[Analytic split formula]\label{step like refl}For some $a>>1$ (in the sense of
Definition \ref{large enough}) the reflection coefficient can be represented
as%
\begin{equation}
R=A_{a}\left(  k\right)  +r_{a}\left(  k\right)  \xi_{a}\left(  k\right)
^{-1},\ \operatorname{Im}k=0, \label{split step}%
\end{equation}
where
\begin{equation}
r_{a}\left(  k\right)  =-\overline{y\left(  k,a\right)  }/y\left(  k,a\right)
\in C, \label{r_a}%
\end{equation}
and ($\psi_{+}=:\psi$)
\begin{equation}
A_{a}\left(  k\right)  =\frac{1}{\psi\left(  a,k\right)  ^{2}}\ \frac
{2ik}{m_{+}(k^{2},a)+m_{-}(k^{2},a)}. \label{A1}%
\end{equation}
The function $A_{a}$ is analytic in $\mathbb{C}^{+}$ except for%
\[
i\Delta=\left\{  k\in i\mathbb{R}_{+}\colon k^{2}\in
\text{$\operatorname*{Spec}$}(\mathbb{L}_{q})\cap\mathbb{R}_{-}\right\}
\]
and
\begin{equation}
\left\vert A_{a}\left(  k\right)  \right\vert \leq2\text{ for a.e. }%
k\in\mathbb{R}. \label{bound on A}%
\end{equation}
Furthermore, for the jump $A_{a}(is-0)-A_{a}(is+0)$ across $i\Delta$ we have%
\begin{align}
&  i\left(  A_{a}(is-0)-A_{a}(is+0)\right)  ds/2\pi=\psi(a,is)^{-2}d\mu
_{a}(-s^{2})\label{d rou}\\
&  =:\ d\rho\left(  s\right)  ,\nonumber
\end{align}
where%
\begin{equation}
\ d\mu_{a}(\lambda)=-\frac{1}{\pi}\operatorname{Im}\left[  m_{+}%
(\lambda+i0,a)+m_{-}(\lambda+i0,a)\right]  ^{-1}d\lambda. \label{dmu}%
\end{equation}
The measure $\ d\rho$ is non-negative, finite, supported on $\Delta$, and
independent of $a$.
\end{proposition}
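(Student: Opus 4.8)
The plan is to follow the template of the classical Proposition~\ref{prop7.2}, replacing Jost solutions by Weyl solutions and the transmission/Wronskian algebra by the $m$-function, and then to handle analyticity, the jump, non-negativity, finiteness and $a$-independence in turn.

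\textbf{Split and closed form.} First I would divide the basic scattering relation (\ref{eq8.1}) by $\psi(a,k)$ and evaluate at $x=a$, obtaining $R=T\,\Psi_-(a,k^2)/\psi(a,k)-\overline{\psi(a,k)}/\psi(a,k)$. Writing $\psi(a,k)=e^{ika}y(k,a)$ as in (\ref{y}) turns the last term into $-\xi_a^{-1}\overline{y(k,a)}/y(k,a)=r_a\xi_a^{-1}$, which is (\ref{r_a}) and lies in $C$ by Corollary~\ref{Corol of Faddeev}; hence $A_a=T\,\Psi_-(a,\cdot^{\,2})/\psi(a,\cdot)$. To reach (\ref{A1}) I would substitute $T=2ik/W(\Psi_-,\psi)$ and use the $m$-function factorization of the Wronskian coming straight from Definition~\ref{def1} by dividing $W=\Psi_-\psi'-\Psi_-'\psi$ by $\Psi_-\psi$,
\[
W(\Psi_-,\psi)(a)=\Psi_-(a,k^2)\,\psi(a,k)\bigl(m_+(k^2,a)+m_-(k^2,a)\bigr).
\]
The factors $\Psi_-(a,k^2)$ cancel and (\ref{A1}) follows.

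\textbf{Analyticity and the bound.} For $a\gg1$, Corollary~\ref{Corol of Faddeev} makes $\psi(a,k)^{-2}=e^{-2ika}y(k,a)^{-2}$ analytic and zero-free in $\mathbb{C}^+$. Each $m_\pm(\cdot,a)$ is Herglotz, hence analytic off $\mathbb{R}$, and $k\mapsto k^2$ sends $\mathbb{C}^+$ into the plane cut along $[0,\infty)$ with the preimage of $(-\infty,0)$ being exactly $i\mathbb{R}_+$. So $A_a$ is analytic on $\mathbb{C}^+\setminus i\mathbb{R}_+$, and on $i\mathbb{R}_+$ singularities can survive only where $k^2\in\operatorname{Spec}(\mathbb{L}_q)\cap\mathbb{R}_-$: off the spectrum $m_\pm$ continue analytically across, and the denominator cannot vanish since $\operatorname{Im}(m_++m_-)>0$ off $\mathbb{R}$, while $m_++m_-=0$ on $\mathbb{R}_-\setminus\operatorname{Spec}$ would force $W(\Psi_-,\psi)=0$, i.e.\ a bound state. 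This confines the singular set to $i\Delta$. The bound is immediate from the split: on $\mathbb{R}$ one has $|R|\le1$ (Proposition~\ref{props of R}), $|r_a|=|\overline{y}/y|=1$ and $|\xi_a^{-1}|=1$, so $|A_a|\le|R|+1\le2$.

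\textbf{The jump and non-negativity.} Here I would compute the two boundary values by approaching $is$ with $\pm\operatorname{Re}k>0$; since $(is\pm\varepsilon)^2=-s^2\pm2is\varepsilon+\varepsilon^2$, the right/left limits correspond to $\lambda=k^2\to-s^2\pm i0$. Using the reflection symmetry $m_\pm(\overline\lambda,a)=\overline{m_\pm(\lambda,a)}$ and the reality of $\psi(a,is)$ (the equation and boundary condition defining $\psi(\cdot,is)$ are real), with $M:=m_+(-s^2+i0,a)+m_-(-s^2+i0,a)$ I get
\[
A_a(is-0)-A_a(is+0)=\frac{-2s}{\psi(a,is)^2}\Bigl(\tfrac1{\overline M}-\tfrac1M\Bigr)=\frac{-2s}{\psi(a,is)^2}\,\frac{2i\operatorname{Im}M}{|M|^2}.
\]
Multiplying by $i/2\pi$ and using $\operatorname{Im}(M^{-1})=-\operatorname{Im}M/|M|^2$ with the Jacobian $|d\lambda|=2s\,ds$ reproduces $\psi(a,is)^{-2}d\mu_a(-s^2)$ with $d\mu_a$ as in (\ref{dmu}). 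Non-negativity is then clear: $g:=-(m_++m_-)^{-1}$ is Herglotz, so $\operatorname{Im}M\ge0$ and $d\mu_a\ge0$, while $\psi(a,is)^2>0$.

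\textbf{Finiteness, $a$-independence, and the main obstacle.} Finiteness follows because $\Delta\subseteq(0,h_0]$ is \emph{bounded} by Hypothesis~\ref{hyp1.1}(1), $\mu_a$ is the representing measure of the Herglotz function $g$ and hence locally finite, and $\psi(a,is)^{-2}$ is bounded on the compact $\overline\Delta$ (using $a\gg1$; the lone delicate case, a zero-energy resonance $y(0,a)=0$, I would treat separately), so $\int d\rho\le\sup_{\overline\Delta}\psi^{-2}\cdot\mu_a([-h_0^2,0))<\infty$. For $a$-independence the clean device is the ODE identity: with $u(a):=\Psi_-(a,\lambda)/\psi(a,k)$ one has $u/W=\psi(a,k)^{-2}/(m_++m_-)$ and, since $\partial_a W=0$ and $\partial_a u=-W(\Psi_-,\psi)/\psi(a,k)^2$,
\[
\partial_a\Bigl[\frac{\psi(a,k)^{-2}}{m_+(k^2,a)+m_-(k^2,a)}\Bigr]=-\psi(a,k)^{-2},
\]
which is \emph{real} for $k=is$; hence $\operatorname{Im}[\psi(a,is)^{-2}(m_++m_-)^{-1}]$, and with it $d\rho$, is independent of $a$. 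I expect the genuine difficulty to be the analyticity/jump step, namely showing that the continuation of $A_a$ off $\mathbb{R}$ is singular \emph{exactly} on $i\Delta$ and that the jump has the sign asserted, since this requires handling the branch of $k\mapsto k^2$, the Herglotz boundary behavior of $m_\pm$, and the reality of $\psi(a,is)$ simultaneously.
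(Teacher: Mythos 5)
Your proposal is correct, and up through the jump formula it is essentially the paper's own argument: the same division of (\ref{eq8.1}) by $\psi(a,k)$, the same Wronskian factorization $W(\Psi_-,\psi)=\Psi_-\psi\,(m_++m_-)$ yielding (\ref{A1}), the same triangle-inequality bound $|A_a|\le 2$, and the same Schwarz-reflection computation of the boundary values (your sign bookkeeping in fact matches the statement (\ref{d rou}), whereas the paper's displayed chain writes $A_a(is+0)-A_a(is-0)$, an evident typo). Where you genuinely diverge is the $a$-independence of $\rho$. The paper proves it by approximation: truncate $q_b=\chi_b q$, invoke Proposition \ref{convergence property} to get $A_{ab}\rightrightarrows A_a$, note that in the classical case $d\rho_{ab}$ is a finite sum of point masses $\sum_n c_n^b\,\delta(s-\kappa_n^b)\,ds$ which is $a$-independent by Corollary \ref{finite rou}, and pass to the weak limit $b\to-\infty$. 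Your Wronskian identity $\partial_a\bigl[\psi(a,k)^{-2}(m_++m_-)^{-1}\bigr]=-\psi(a,k)^{-2}$ replaces all of this machinery with one line of ODE algebra and integrates to the transparent formula $F_a-F_{a'}=-\int_{a'}^{a}\psi(x,k)^{-2}\,dx$; this is more elementary and self-contained. One point you should make explicit to close it: since $d\mu_a$ may be singular on $[-h_0^2,0]$, the boundary imaginary part defining $d\rho$ is only a weak limit of measures (the paper's footnote convention), and $F_a$ itself need not have pointwise boundary values on $i\Delta$; the correct formulation is that for $a,a'\gg 1$ the difference $F_a-F_{a'}$ is analytic across $i\Delta$ (because $\psi(x,\cdot)$ is zero-free near $i\Delta$ for $x\ge a'$ by Corollary \ref{Corol of Faddeev}) and real there, hence contributes nothing to the jump measure — a one-line completion of your derivative computation. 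The same corollary makes your hedge about a zero-energy resonance $y(0,a)=0$ vacuous for $a\gg 1$. The trade-off of your shortcut is that it forgoes the byproduct the paper's approximation scheme yields and reuses later (e.g.\ in Proposition \ref{combin}): the identification of $\rho$ as the weak limit of the discrete norming-constant measures $\rho_b$.
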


\begin{proof}
The split (\ref{split step}) is obtained same way as (\ref{R-split}). By
(\ref{eq8.2}) and (\ref{m-funct}) we have
\[
R\left(  k\right)  =\frac{1}{\psi\left(  a,k\right)  ^{2}}\ \frac{2ik}%
{m_{+}(k^{2},a)+m_{-}(k^{2},a)}-\frac{\overline{\psi\left(  a,k\right)  }%
}{\psi\left(  a,k\right)  }\xi_{a}^{-1}\left(  k\right)
\]
which proves (\ref{split step}) with $A_{a}$ given by (\ref{A1}). The bound
(\ref{bound on A}) follows from Proposition \ref{props of R} and the obvious
fact $\left\vert r_{a}\xi_{a}{}^{-1}\right\vert =1$.

Note that since $m_{\pm}$ are both Herglotz, the function
\[
f_{a}(\lambda)=-\left(  m_{+}(\lambda,a)+m_{-}(\lambda,a)\right)  ^{-1}%
\]
is also Herglotz. It follows from (\ref{Herglotz inversion}) that its
representing measure $d\mu_{a}$\footnote{Through the paper we use the
convention%
\[
\operatorname{Im}f\left(  t+i0\right)  dt:=w-\lim_{\varepsilon\rightarrow
+0}\operatorname{Im}f\left(  t+i\varepsilon\right)  dt.
\]
}, given by (\ref{dmu}), is non-negative, finite ($\int\frac{d\mu}{1+t^{2}%
}<\infty$) and supported \cite{TeschlBOOK} on the spectrum of $\mathbb{L}_{q}%
$. Now, from (\ref{A1}) and (\ref{dmu})
\begin{align*}
&  i\left(  A_{a}(is+0)-A_{a}(is-0)\right)  ds/2\pi\\
&  =-\frac{1}{\pi}\operatorname{Im}A_{a}(is+0)\,ds=\frac{1}{\psi^{2}%
(a,is)}\dfrac{1}{\pi}\,\operatorname{Im}\frac{-(-2s)ds}{m_{+}(-s^{2}%
+i0,a)+m_{-}(-s^{2}+i0,a)}\\
&  =\psi(a,is)^{-2}d\mu_{a}(-s^{2})=:d\rho_{a}\left(  s\right)
\end{align*}
and (\ref{d rou}) follows. By Corollary \ref{Corol of Faddeev} $\psi
(a,is)^{-2}$ is bound for $a>>1$ and one concludes that the measure
$d\rho\left(  s\right)  $ is finite. It remains to show that $\rho$ is
independent of $a$. To show this we employ the following approximation
arguments. Consider $q_{b}:=\chi_{b}q$ with $b<a$. (e.g. $q_{b}\left(
x\right)  =0,\ x<b\,$\ and $q_{b}\left(  x\right)  =q\left(  x\right)
,\ x\geq b$). Then by Proposition \ref{convergence property} $m_{-}^{b}\left(
\lambda,a\right)  \rightrightarrows$ $m_{-}\left(  \lambda,a\right)  $ in
$\mathbb{C}^{+}$ as $b\rightarrow-\infty$ and hence $\ A_{ab}\rightrightarrows
A_{a}$ in $\mathbb{C}^{+}\diagdown i\Delta$. Therefore, $A_{ab}\rightarrow
A_{a}$ weakly on the boundary of $\mathbb{C}^{+}\diagdown i\Delta$.
Apparently
\[
d\rho_{ab}\left(  s\right)  =%
{\displaystyle\sum_{n=1}^{N_{b}}}
c_{n}^{b}\delta\left(  s-\kappa_{n}^{b}\right)  \ ds,
\]
where $\left\{  -\left(  \kappa_{n}^{b}\right)  ^{2}\right\}  $ are the bound
states of $\mathbb{L}_{q_{b}}$ and $c_{n}^{b}$ are their norming constants,
is, by Corollary \ref{finite rou}, independent of $a$. That is, $\rho
_{ab}=\rho_{b}$. But $\rho\left(  s\right)  =w-\lim\rho_{b}\left(  s\right)  $
as $b\rightarrow-\infty$ which concludes the proof.
\end{proof}

Note that the measure $\rho$ plays the role of 'smeared bound states norming
constants' and can be recovered from $A_{a}~$by (\ref{d rou}). This is the
main value of our split (\ref{split step}). This split has a few alternative
forms. E.g. $R=$ $\left(  A_{a}+\xi_{a}^{-1}\right)  +\left(  r_{a}-1\right)
\xi_{a}{}^{-1}$ also splits $R$ into an analytic function and a small
remainder. Such split (given in a different form) was crucially used in our
\cite{RybCommPDEs2013}. The proof given here appears particularly short.

We find the next consequence of Proposition \ref{step like refl} quite surprising.

\begin{corollary}
If the negative spectrum of $\mathbb{L}_{q}$ is discrete then the sequence
$\left\{  c_{n}\right\}  $ of the right norming constants is summable.
\end{corollary}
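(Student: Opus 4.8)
The plan is to read the conclusion off Proposition~\ref{step like refl} almost directly: the measure $d\rho$ appearing there is already known to be nonnegative, finite, and independent of $a$, so the only genuine work is to show that discreteness of the negative spectrum forces $d\rho$ to be \emph{purely atomic}, with its atoms located at the $\kappa_n$ and carrying exactly the right norming constants $c_n$ as masses. Granting this, summability is immediate and requires no estimates: since $d\rho$ is a finite measure,
\[
\sum_n c_n=\int d\rho=\rho(\mathbb{R})<\infty,
\]
which is precisely the assertion $\{c_n\}\in l^{1}$.

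First I would pin down the atomic structure. By hypothesis $\operatorname*{Spec}(\mathbb{L}_q)\cap\mathbb{R}_-=\{-\kappa_n^2\}$ consists of isolated eigenvalues, so the singular set $i\Delta=\{i\kappa_n\}$ of $A_a$ in Proposition~\ref{step like refl} is discrete and, more importantly, $A_a$ is meromorphic across the negative-energy part of the imaginary axis rather than having a branch cut there. Equivalently, the representing measure $d\mu_a$ of the Herglotz function $f_a(\lambda)=-(m_+(\lambda,a)+m_-(\lambda,a))^{-1}$, which by Proposition~\ref{step like refl} is supported on $\operatorname*{Spec}(\mathbb{L}_q)$, has no continuous component on $\mathbb{R}_-$: it is purely atomic there, say $d\mu_a|_{\mathbb{R}_-}=\sum_n m_n\,\delta_{-\kappa_n^2}$ with $m_n>0$. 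The defining relation $d\rho(s)=\psi(a,is)^{-2}\,d\mu_a(-s^2)$ from (\ref{d rou}) then makes $d\rho$ a sum of point masses, $d\rho=\sum_n\psi(a,i\kappa_n)^{-2}m_n\,\delta_{\kappa_n}$, supported on $\Delta=\{\kappa_n\}$.

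Next I would identify each atom with the right norming constant. Since $m_++m_-$ has a simple zero at every bound state $\lambda_n=-\kappa_n^2$ (the vanishing of $W(\Psi_-,\psi_+)$), formula (\ref{A1}) shows $A_a$ has a simple pole at $i\kappa_n$, and a direct residue computation using $\tfrac{d}{dk}m_\pm(k^2,a)=2k\,(\partial_\lambda m_\pm)(k^2,a)$ gives $\operatorname*{Res}_{i\kappa_n}A_a=i\,[\psi(a,i\kappa_n)^2\,\partial_\lambda(m_++m_-)|_{\lambda_n}]^{-1}=:ic_n$, which is positive and matches the classical residue formula (\ref{residues}). Feeding such a simple pole $ic_n/(k-i\kappa_n)$ into the jump relation (\ref{d rou}) via the Sokhotski-Plemelj formula reproduces precisely the atom $c_n\,\delta_{\kappa_n}$, so this computation of $d\rho$ agrees with the atomic one above and forces $\psi(a,i\kappa_n)^{-2}m_n=c_n$. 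That these $c_n$ are the right norming constants (and not some other normalization) follows because $d\rho$ is $a$-independent and reduces to $\sum_n c_n\,\delta_{\kappa_n}$ in the classical case, which is exactly Corollary~\ref{finite rou}.

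The main obstacle is this identification step. One must be certain that discreteness of the negative spectrum genuinely excludes any absolutely continuous or singular continuous part of $d\mu_a$ below zero, so that $A_a$ is truly meromorphic near $i\Delta$ and the boundary ``jump'' in (\ref{d rou}) degenerates into residue contributions; and one must verify that the residue of $A_a$ at $i\kappa_n$ is the norming constant in the same normalization as in the classical split of Proposition~\ref{prop7.2}. Both points are spectral-theoretic bookkeeping built on (\ref{A1}), (\ref{d rou}), and the already-settled classical case, and once they are in place the finiteness of $d\rho$ asserted in Proposition~\ref{step like refl} closes the argument at once.
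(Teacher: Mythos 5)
Your proposal is correct and follows essentially the paper's intended route: the corollary is meant to be read off Proposition~\ref{step like refl} exactly as you do --- discreteness of the negative spectrum makes the $a$-independent measure $\rho$ purely atomic with masses equal to the right norming constants, so finiteness of $\rho$ gives $\sum_n c_n=\rho(\mathbb{R})<\infty$. The only (minor) difference is in how the atoms are identified with the $c_n$: you verify this by a direct residue/Sokhotski--Plemelj computation on $A_a$ via (\ref{A1}) and (\ref{d rou}), whereas the paper obtains it implicitly from Corollary~\ref{finite rou} combined with the truncation weak-limit argument ($\rho=w\text{-}\lim_{b\to-\infty}\rho_b$ with $\rho_b=\sum_n c_n^b\,\delta_{\kappa_n^b}$) already contained in the proof of Proposition~\ref{step like refl}.
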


The next statement offers some more information on the components in
(\ref{split step}).

\begin{proposition}
[More properties of the analytic split]\label{combin}The function $r_{a}%
\xi_{a}{}^{-1}$ in (\ref{split step}) can further be split into
\begin{equation}
r_{a}\left(  k\right)  =\xi_{a}{}\left(  k\right)  R_{a}\left(  k\right)
-y\left(  k,a\right)  ^{-1}T_{a}\left(  k\right)  , \label{Split 2}%
\end{equation}
where $T_{a},R_{a}$ are the transmission and reflection coefficients for
$q_{a}=\chi_{a}q$. We have
\begin{equation}
R_{a}\left(  k\right)  =O\left(  1/k\right)  ,\ k\rightarrow\pm\infty
;\ R_{a}\left(  k\right)  =o\left(  1/a\right)  ,\ a\rightarrow\infty
,\ \label{R asymp}%
\end{equation}
and $T_{a}$ has at most one pole $\kappa_{a}$ subject to
\begin{equation}
\kappa_{a}=o\left(  1/a\right)  ,\ \ a\rightarrow\infty. \label{kappa a}%
\end{equation}
The function $A_{a}$ in (\ref{split step}) has the property: for $C$ large
enough
\begin{equation}
\left\vert \xi_{a}{}\left(  k\right)  A_{a}\left(  k\right)  \right\vert
\lesssim_{a,q}1\text{ for }\left\vert k\right\vert \geq C,\ \operatorname{Im}%
k\geq0. \label{uniform decay}%
\end{equation}
If $\Delta=\left\{  \kappa_{n}\right\}  \in l^{1}$ then $A_{a}$ has the form
similar to (\ref{R rep 2})
\begin{equation}
A_{a}\left(  k\right)  =\xi_{a}^{-1}\left(  k\right)  S_{a}\left(  k\right)
/B\left(  k\right)  \label{factor}%
\end{equation}
where $S_{a}\in H^{\infty},\ \left\Vert S_{a}\right\Vert _{\infty}\leq
2,~\ $and
\begin{equation}
B\left(  k\right)  =\prod_{n\geq1}\frac{k-i\kappa_{n}}{k+i\kappa_{n}}.
\label{B}%
\end{equation}

\end{proposition}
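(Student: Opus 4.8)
The plan is to extract all assertions from the classical scattering relation for the cut-off potential $q_a=\chi_a q$, which is genuinely short range because $\int_a^{\infty}x\left\vert q\right\vert <\infty$ by Hypothesis \ref{hyp1.1}(2). First I would prove the refined split (\ref{Split 2}). Writing $\psi:=\psi_+$, I note that the right Jost solution of $q_a$ coincides with $\psi$ (both solve the same equation near $+\infty$), while the left Jost solution of $q_a$ reduces to the free wave $e^{-ikx}$ on $(-\infty,a]$ since $q_a\equiv0$ there; in particular it equals $e^{-ika}$ at $x=a$. Evaluating the right basic scattering relation (\ref{basic scatt identity}) for $q_a$ at $x=a$ and using $\psi(a,k)=e^{ika}y(k,a)$ from (\ref{y}) gives
\[
T_a(k)=\overline{y(k,a)}+R_a(k)\,\xi_a(k)\,y(k,a),\qquad k\in\mathbb{R}.
\]
Solving this for $\overline{y(k,a)}$ and substituting into $r_a=-\overline{y}/y$ from (\ref{r_a}) produces (\ref{Split 2}) immediately.

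Next I would establish (\ref{R asymp}) and (\ref{kappa a}). From the identity above, $R_a=(T_a-\overline{y})/(\xi_a y)$, and feeding the Jost expansion (\ref{eq6.4}) of Theorem \ref{th6.1} into (\ref{eq6.8})--(\ref{eq6.10}) gives $T_a=1+O(1/k)$ and $\overline{y}=1+O(1/k)$, whence $T_a-\overline{y}=O(1/k)$ and $R_a(k)=O(1/k)$ as $k\to\pm\infty$. For $R_a(k)=o(1/a)$ I would use $\int_a^{\infty}\left\vert q\right\vert \le a^{-1}\int_a^{\infty}x\left\vert q\right\vert =o(1/a)$, so $q_a\to0$ in $L^1$, together with the perturbative (Born) dependence $\left\vert R_a(k)\right\vert \lesssim_k\Vert q_a\Vert_{L^1}$. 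For $T_a$: its poles lie at $i\kappa$ with $-\kappa^2$ a bound state of $\mathbb{L}_{q_a}$. Imposing a Dirichlet condition at $a$ decouples $\mathbb{L}_{q_a}$ into a free half-line operator (no bound states) and $\mathbb{L}_q^D$ on $(a,\infty)$, which has none because $a>>1$ (Definition \ref{large enough}, Theorem \ref{Faddeev}); since this decoupling is a rank-one resolvent perturbation, $\mathbb{L}_{q_a}$ has at most one eigenvalue below $0$. The Birman--Schwinger bound $\kappa_a\le\tfrac12\Vert q_a\Vert_{L^1}=o(1/a)$ then gives (\ref{kappa a}).

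The estimate (\ref{uniform decay}) follows from the closed form (\ref{A1}). Since $\psi(a,k)^2=\xi_a(k)\,y(k,a)^2$, one has
\[
\xi_a(k)A_a(k)=\frac{1}{y(k,a)^2}\,\frac{2ik}{m_+(k^2,a)+m_-(k^2,a)}.
\]
By Corollary \ref{Corol of Faddeev} the factor $y(\cdot,a)^{-2}$ is bounded on $\mathbb{C}^+$ for $a>>1$, while the high-energy asymptotics $m_\pm(\lambda,a)=i\sqrt{\lambda}\,(1+o(1))$ of the Weyl $m$-functions yield $\tfrac{2ik}{m_++m_-}\to1$ as $\left\vert k\right\vert \to\infty$, $\operatorname{Im}k\ge0$. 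Hence $\left\vert \xi_aA_a\right\vert \lesssim_{a,q}1$ on $\{\left\vert k\right\vert \ge C,\ \operatorname{Im}k\ge0\}$.

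Finally, under $\Delta=\{\kappa_n\}\in l^1$ the negative spectrum is the discrete set $\{-\kappa_n^2\}$, so the jump formula (\ref{d rou}) collapses to point masses and $A_a$ is meromorphic in $\mathbb{C}^+$ with simple poles at $\{i\kappa_n\}$; moreover $\{\kappa_n\}\in l^1$ is precisely the Blaschke condition (\ref{Blaschke cond}) for purely imaginary zeros, so $B$ in (\ref{B}) is a bona fide Blaschke product whose simple zeros cancel those poles. I would then set $S_a:=\xi_a B A_a$, which is analytic in $\mathbb{C}^+$ (poles cancelled, $\xi_a,B\in H^\infty$) and satisfies $\left\vert S_a\right\vert =\left\vert A_a\right\vert \le2$ a.e. on $\mathbb{R}$ by (\ref{bound on A}), since $\left\vert \xi_a\right\vert =\left\vert B\right\vert =1$ there; combining this boundary bound with the growth control (\ref{uniform decay}) at infinity and the maximum principle gives $S_a\in H^{\infty}$ with $\Vert S_a\Vert_\infty\le2$, and (\ref{factor}) follows by dividing by $\xi_a B$. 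I expect the main obstacle to be exactly this last upgrade: passing from an a.e. boundary bound to genuine $H^{\infty}$ membership with the sharp constant needs $S_a$ to be of bounded type, which is where (\ref{uniform decay}) is indispensable; a secondary delicate point is the uniformity up to the real axis of the $m$-function asymptotics underlying (\ref{uniform decay}), for which I would note that only the local behavior of $q$ near $a$ (where $q\in L^1_{\mathrm{loc}}$) controls the leading term $i\sqrt{\lambda}$, so the uncontrolled left tail of $q$ does not interfere.
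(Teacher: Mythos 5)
Your derivation of (\ref{Split 2}) and of the asymptotics (\ref{R asymp}), (\ref{kappa a}) is correct and essentially the paper's argument (the paper quotes the Thirring--Lieb inequality where you use Birman--Schwinger, and your rank-one Dirichlet-decoupling count making ``at most one pole'' explicit is a nice touch). The two remaining assertions, however, are exactly where the difficulty of this proposition sits, and both of your arguments have genuine gaps. For (\ref{uniform decay}) you invoke $m_{\pm}(\lambda,a)=i\sqrt{\lambda}\,(1+o(1))$ uniformly for $\operatorname{Im}k\geq 0$, defending the uniformity up to the real axis by a locality heuristic (``only the local behavior of $q$ near $a$ controls the leading term''). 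This is false for $m_{-}$: the Atkinson asymptotics, which is the paper's actual input, holds only in sectors $0<\varepsilon<\arg k<\pi-\varepsilon$, and for the left tails allowed by Hypothesis \ref{hyp1.1} --- Pearson bumps, white noise, precisely the cases with $\left\vert R\right\vert=1$ a.e. --- one has $\operatorname{Im}m_{-}(k^{2}+i0,a)=0$ for a.e. real $k$, so $m_{-}$ is real a.e. on the axis and in no sense close to $ik$ there. (A correct elementary patch near the axis would use only $\operatorname{Im}m_{-}\geq 0$ together with $m_{+}(k^{2},a)=ik+O(1)$ from the Jost asymptotics, but that is not what you wrote.) The paper instead splits $g_{a}=g_{a}^{+}+g_{a}^{-}$, bounds $g_{a}^{-}$ explicitly using $\operatorname*{Supp}(\mu_{a})\cap\mathbb{R}_{-}\subseteq[-h_{0}^{2},0]$, and proves $\frac{k}{k+i}g_{a}^{+}\in H^{\infty}$ by cutting off the potential ($q_{b}=\chi_{b}q$), applying Proposition \ref{prop7.2} and Phragm\'en--Lindel\"of to the approximants (for which continuity up to $\mathbb{R}$ does hold), obtaining a bound uniform in $b$, and passing to the limit via Proposition \ref{convergence property}.

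For (\ref{factor}) your final step --- an a.e. boundary bound $\left\vert S_{a}\right\vert\leq 2$ on $\mathbb{R}$ plus (\ref{uniform decay}) plus ``the maximum principle'' --- does not go through, and you flag but do not fill the hole: without knowing that $S_{a}$ is of bounded type, a.e. boundedness of the boundary values together with boundedness on $\{\left\vert k\right\vert\geq C\}$ does not imply $H^{\infty}$ membership. Concretely, $e^{i\gamma/k}$ ($\gamma>0$) is analytic in $\mathbb{C}^{+}$, unimodular a.e. on $\mathbb{R}$, bounded on $\{\left\vert k\right\vert\geq C\}$, yet unbounded as $k\to 0$ nontangentially --- and $k=0$ is exactly the accumulation point of the zeros $i\kappa_{n}$ of $B$, so a singular inner-type factor of this kind cannot be excluded a priori by your data. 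The paper's proof supplies precisely the missing mechanism: the two-parameter cutoff $q_{bc}=\chi_{c}q_{+}-\chi_{b}q_{-}$, the monotone movement of bound states combined with Proposition \ref{Product analog of m-test} to get $B_{bc}\rightrightarrows B$, and locally uniform convergence $A_{abc}\rightrightarrows A_{a}$ (Proposition \ref{convergence property}), so that $S_{a}=\lim S_{abc}$ inherits $\left\Vert S_{a}\right\Vert_{\infty}\leq 2$ from uniformly bounded approximants for which Proposition \ref{prop7.2} applies classically. Without an approximation (or bounded-type) argument of this kind your proofs of (\ref{uniform decay}) and (\ref{factor}) are incomplete.
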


\begin{proof}
Equation (\ref{Split 2}) is nothing but rearranged (\ref{R1}) written for
$q_{a}$. The asymptotics (\ref{R asymp}) directly follow from Theorem
\ref{th6.1}. The asymptotics (\ref{kappa a}) holds due to the Thirring-Lieb
inequality
\[
\kappa_{a}\lesssim\int_{a}^{\infty}\left\vert q\right\vert <\frac{1}{a}%
\int_{a}^{\infty}x\left\vert q\left(  x\right)  \right\vert dx.
\]
The rest of the statement is a bit harder. It follows from (\ref{A1}) that
\[
\xi_{a}{}A_{a}=y_{a}^{-2}g_{a},
\]
where%
\[
y_{a}\left(  k\right)  :=y\left(  k,a\right)  ,\ \ g_{a}\left(  k\right)
:=\frac{2ik}{m_{+}(k^{2},a)+m_{-}(k^{2},a)}.
\]
By Corollary \ref{Corol of Faddeev}, (\ref{uniform decay}) is then equivalent
to $\left\vert g_{a}\left(  k\right)  \right\vert \lesssim_{a,q}1$ for
$\left\vert k\right\vert \geq C,\ \operatorname{Im}k\geq0.$ The Atkinson
classical result \cite{Atkinson81} says that $m_{\pm}(k^{2},a)=ik+o\left(
1\right)  $ as $\left\vert k\right\vert \rightarrow\infty,\ 0<\varepsilon<\arg
k<\pi-\varepsilon$, which by Corollary \ref{Hergtotz rep 2} implies that
\begin{align*}
g_{a}(\lambda)  &  =2ik\int\frac{d\mu_{a}(s)}{k^{2}-s}\ =g_{a}^{+}\left(
k\right)  +g_{a}^{-}\left(  k\right)  ,\\
g_{a}^{\pm}\left(  k\right)   &  :=2ik\int_{\mathbb{R}_{\pm}}\frac{d\mu
_{a}(s)}{k^{2}-s},
\end{align*}
where $d\mu_{a}$ is given by (\ref{dmu}), and
\begin{equation}
g_{a}(\lambda)=1+o\left(  k^{-1}\right)  ,\ \left\vert k\right\vert
\rightarrow\infty,\ 0<\varepsilon<\arg k<\pi-\varepsilon. \label{f asym}%
\end{equation}
By Proposition \ref{step like refl}, $\operatorname*{Supp}\left(  \mu
_{a}\right)  \cap\mathbb{R}_{-}\subseteq\left[  -h_{0}^{2},0\right]  $ and we
clearly have%
\begin{equation}
g_{a}^{-}\left(  k\right)  =2ik\int_{-h_{0}^{2}}^{0}\frac{d\mu_{a}\left(
s\right)  }{k^{2}-s}=O\left(  \frac{1}{k}\right)  ,\ \left\vert k\right\vert
\rightarrow\infty. \label{G-}%
\end{equation}

Thus we are done if we show%
\begin{equation}
G_{a}\left(  k\right)  :=\frac{k}{k+i}g_{a}^{+}\left(  k\right)  \in
H^{\infty}. \label{G+}%
\end{equation}
To this end consider, as before, $q_{b}=q\chi_{b}$ first. We are now under
conditions of the previous subsection and Proposition \ref{prop7.2} applies.
In particular, (in obvious notation)
\begin{equation}
g_{ab}\left(  k\right)  =\psi\left(  a,k\right)  ^{2}A_{ab}\left(  k\right)
=y_{a}\left(  k\right)  ^{2}S_{ab}\left(  k\right)  /B_{b}\left(  k\right)  ,
\label{g}%
\end{equation}
where $B_{b}$ is the (necessarily finite) Blaschke product with zeros at
$\left\{  i\kappa_{n}^{b}\right\}  $, $\ y_{a}^{2}S_{ab}\in H^{\infty}\cap
C,\ $and uniformly in $b<a$
\[
\left\Vert y_{a}^{2}S_{ab}\right\Vert _{\infty}\leq2\left\Vert y_{a}%
\right\Vert _{\infty}^{2}\lesssim_{q_{a}}1.
\]
One concludes from the representation
\begin{equation}
G_{ab}\left(  k\right)  =\frac{k}{k+i}g_{ab}\left(  k\right)  -\frac{k}%
{k+i}g_{ab}^{-}\left(  k\right)  \label{g+}%
\end{equation}
that $G_{ab}$ is analytic in $\mathbb{C}^{+}$. Moreover, since $g_{ab}^{-}$ is
a rational function with poles at $\left\{  \pm i\kappa_{n}^{b}\right\}  $,
$G_{ab}$ is continuous on $\mathbb{R}$ and it follows from (\ref{g+}),
(\ref{g}), (\ref{G-}), and Proposition \ref{prop7.2} that%
\begin{align*}
\lim G_{ab}\left(  k\right)   &  =\lim g_{ab}\left(  k\right)  -\lim
g_{ab}^{-}\left(  k\right) \\
&  =1,~\left\vert k\right\vert \rightarrow\infty,\ \operatorname{Im}k\geq0.
\end{align*}
Thus, by the Phragm\'{e}n--Lindel\"{o}f principle $G_{ab}\in H^{\infty}\cap C$
for any $b<a$ and on the real line, uniformly in $b$ (for $\left\vert
b\right\vert $ large), we have%
\begin{equation}
\left\Vert G_{ab}\right\Vert _{\infty}\leq4\left\Vert y_{a}\right\Vert
_{\infty}^{2}+\sup_{k\in\mathbb{R}}\left\vert \frac{2k^{2}}{k+i}%
\int_{\mathbb{R}_{-}}\frac{d\mu_{ab}(s)}{k^{2}-s}\right\vert \lesssim_{a,q}1.
\label{bound}%
\end{equation}
Here we have used the fact that due to Proposition \ref{convergence property},
$\mu_{a}\left(  s\right)  =w-\lim_{b\rightarrow-\infty}\mu_{ab}\left(
s\right)  $. By the same proposition, uniformly in $\mathbb{C}^{+}$
\begin{align*}
\ \ G_{ab}\left(  k\right)   &  =\frac{k}{k+i}\left(  \psi\left(  a,k\right)
^{2}A_{ab}\left(  k\right)  -2ik\int_{\mathbb{R}_{-}}\frac{d\mu_{ab}(s)}%
{k^{2}-s}\right) \\
&  \underset{b\rightarrow-\infty}{\rightrightarrows}\frac{k}{k+i}\left(
\psi\left(  a,k\right)  ^{2}A_{a}\left(  k\right)  -2ik\int_{\mathbb{R}_{-}%
}\frac{d\mu_{a}(s)}{k^{2}-s}\right)  \text{ }\\
&  =G_{a}\left(  k\right)
\end{align*}
which combined with (\ref{bound}) proves (\ref{G+}).

It remains to show (\ref{factor}). A bit more complicated approximation of $q$
is required. Split
\[
q=q_{+}-q_{-}\text{ where }q_{\pm}:=\pm\frac{1}{2}\left(  q\pm\left\vert
q\right\vert \right)  \geq0.
\]
and consider $q_{bc}=\chi_{c}q_{+}-\chi_{b}q_{-}$ with $b,c<a$. Clearly
\[
q_{bc}\underset{c\rightarrow-\infty}{\longrightarrow}q_{b}%
\underset{b\rightarrow-\infty}{\longrightarrow}q\text{ in }%
L_{\operatorname*{loc}}^{1}.
\]
Then by Proposition \ref{convergence property} $m_{-}^{bc}\left(
\cdot,a\right)  \underset{c\rightarrow-\infty}{\rightrightarrows}m_{-}%
^{b}\left(  \cdot,a\right)  \underset{b\rightarrow-\infty}{\rightrightarrows
}m_{-}\left(  \cdot,a\right)  $ in $\mathbb{C}^{+}$ and hence
\begin{equation}
A_{abc}\underset{c\rightarrow-\infty}{\rightrightarrows}A_{ab}%
\underset{b\rightarrow-\infty}{\rightrightarrows}A_{a}\text{ in }%
\mathbb{C}^{+}. \label{converg of As}%
\end{equation}
Since $q_{bc}$ is clearly subject to Proposition \ref{prop7.2}, the
representation (\ref{R rep 2}) is valid. We are done then if we show that
\begin{equation}
B_{bc}\underset{c\rightarrow-\infty}{\rightrightarrows}B_{b}%
\underset{b\rightarrow-\infty}{\rightrightarrows}B\text{ in }\mathbb{C}^{+}
\label{converg of Blaschek}%
\end{equation}
as (\ref{converg of As}) and (\ref{converg of Blaschek}) will immediately
imply that the limit $\lim\limits_{b\rightarrow-\infty}$ $\lim
\limits_{c\rightarrow-\infty}S_{abc}$ exists on compacts in $\mathbb{C}^{+}$
and defines an $H^{\infty}$ function $S_{a}$ satisfying $\left\Vert
S_{a}\right\Vert _{\infty}\leq2$.

We make use of a well-known general perturbation principle which in our
particular case, loosely speaking, says that the (negative) bound states of
$\mathbb{L}_{q_{bc}}$ move in unison rightward (leftward) as $c$ ($b$) moves
leftward and new bound states may disappear at (appear from) $0$ only.
Together with Proposition \ref{convergence property} this means that the
(finite) Blaschke product $B_{bc}$ converges to a finite Blaschke product
$B_{b}$\footnote{$B_{b}\left(  k\right)  $ could be $1$.} and, in turn, by
Proposition \ref{Product analog of m-test} $B_{b}$ converges to the (infinite)
Blaschke product $B$ given by (\ref{B}).
\end{proof}

We conclude this section with the following explicitly solvable case which
appears illustrative.

\begin{example}
\label{example} If $q(x)$ is a pure step function, i.e. $q(x)=-h^{2}%
,\;x<0,\;q(x)=0,\;x\geq0$ then $\operatorname*{Spec}\left(  \mathbb{L}%
_{q}\right)  =(-h^{2},\infty)$ and purely a.c., $(-h^{2},0)$ and $\left(
0,\infty\right)  $ being its simple and two fold components respectively.
Moreover%
\[
R(k)=-\left(  \frac{h}{\sqrt{k^{2}}+\sqrt{k^{2}+h^{2}}}\right)  ^{2}%
,\ \ d\rho\left(  s\right)  =\frac{2s}{\pi h^{2}}\sqrt{h^{2}-s^{2}}ds.
\]
The function $y\left(  \cdot,x\right)  ^{-1}\in H^{\infty}$ for any
$x>-\dfrac{\pi}{2h}$.
\end{example}

\section{The IST\ Hankel Operator\label{Our hankel}}

The previous section suggests that the Hankel operator arising in the IST has
a very specific structure. In this section we state and prove some of its
properties of principal importance.

\begin{definition}
[IST Hankel operator]\label{def IST HO} Assume that initial data $q$ is
subject to Hypothesis \ref{hyp1.1}. Let $R$ and $\rho$ be as in Definition
\ref{def R} and Proposition \ref{step like refl} respectively. We call the
Hankel operator
\[
\mathbb{H}(x,t):=\mathbb{H}(\varphi_{x,t}),
\]
with the symbol
\begin{equation}
\varphi_{x,t}(k)=\xi_{x,t}(k)R(k)+\int^{h_{0}}_{0}\frac{\xi_{x,t}%
(is)\,d\rho(s)}{s+ik}, \label{eq9.9}%
\end{equation}
the IST Hankel operator associated with $q$.
\end{definition}

Here is the main result of this section

\begin{theorem}
[Fundamental properties of the IST Hankel operator]\label{th9.5} Under
Hypothesis \ref{hyp1.1} the IST Hankel operator $\mathbb{H}(x,t)$ is
well-defined and has the properties: for any $x\in\mathbb{R},\;t>0$

\begin{enumerate}
\item[(1)] $\mathbb{H}(x,t)$ is selfadjoint,

\item[(2)] $\mathbb{H}(x,t)$ is compact,

\item[(3)] $\mathbb{I}+\mathbb{H}(x,t)>0.$
\end{enumerate}
\end{theorem}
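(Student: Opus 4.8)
The plan is to establish the three properties in turn, each leaning on the analytic structure of the symbol $\varphi_{x,t}$ developed in Section~\ref{Refl} together with the Hankel/Toeplitz machinery of Sections~\ref{hankel} and~\ref{H^infty+C}. For selfadjointness (1), I would invoke Proposition~\ref{prop4.7}: it suffices to verify $\mathbb{J}\varphi_{x,t}=\overline{\varphi_{x,t}}$. Since $\xi_{x,t}(-k)=\overline{\xi_{x,t}(k)}$ on the real line and $R(-k)=\overline{R(k)}$ by Proposition~\ref{props of R}, the reflection term satisfies the symmetry directly; for the integral term one checks that $\mathbb{J}$ sends $(s+ik)^{-1}$ to $(s-ik)^{-1}=\overline{(s+ik)^{-1}}$ while $\xi_{x,t}(is)$ is real and $d\rho$ is a real measure (Proposition~\ref{step like refl}), so the whole symbol obeys $\mathbb{J}\varphi_{x,t}=\overline{\varphi_{x,t}}$.

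For compactness (2), the key is Hartman's theorem (Theorem~\ref{thHart}): I must show $\varphi_{x,t}\in H^{\infty}+C$, equivalently that its principal co-analytic part is in $H^{\infty}+C$. Using the analytic split $R=A_{a}+r_{a}\xi_{a}^{-1}$ from Proposition~\ref{step like refl} I would write $\xi_{x,t}R=\xi_{x,t}A_{a}+r_{a}\xi_{x,t}\xi_{a}^{-1}$. The bound-state integral together with $\xi_{x,t}A_{a}$ assembles into something controlled by $\xi_{x,t}$ times an analytic-type factor; here the decisive input is Theorem~\ref{thGru01}, which places $e^{ip}$ with $p(k)=8k^{3}t+2kx$ (exactly $\xi_{x,t}$) into $H^{\infty}+C$. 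Since $H^{\infty}+C$ is a closed algebra (Theorem~\ref{sarason thm}) and the remaining factors are in $C$ or $H^{\infty}$ (via Corollary~\ref{Corol of Faddeev} and the factorization~(\ref{factor})), membership $\varphi_{x,t}\in H^{\infty}+C$ follows, giving compactness.

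The heart of the theorem, and where I expect the real difficulty, is the strict positivity (3), i.e. that $-1$ is never an eigenvalue of $\mathbb{H}(x,t)$. Equivalently I must rule out $\|\mathbb{H}(x,t)\|=1$ with $-1$ in the spectrum. The natural route is Theorem~\ref{th5.7}/Theorem~\ref{Thm 5.6'}: if $\varphi_{x,t}$ were unimodular and not invertible in $H^{\infty}+C$ one would get $\|\mathbb{H}(x,t)\|<1$ outright, and Theorem~\ref{rem5.8} covers the contractive non-unimodular case. So the strategy is to show that the unimodular part of the symbol has the form $e^{ip}u$ with $u$ unimodular in $H^{\infty}+C$, whence Theorem~\ref{theorem 5.4'} yields non-invertibility and Theorem~\ref{Thm 5.6'} gives $\|\mathbb{H}\|<1$ on the set where $|R|=1$, while on the complementary set where $|R|<1$ the estimate~(\ref{eig}) of Theorem~\ref{rem5.8} applies. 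The main obstacle is precisely the non-transparent case $|R(k)|=1$ a.e.\ emphasized in the introduction: there $\varphi_{x,t}$ is genuinely unimodular and one cannot fall back on strict contractivity of the symbol, so everything hinges on exhibiting the factorization $\varphi_{x,t}=e^{ip}\cdot(\text{unimodular in }H^{\infty}+C)$ and on the Grudsky factorization $e^{ip}=Bu$ of Theorem~\ref{thGru01}, which forces non-invertibility via Lemma~\ref{lem5.6}. Assembling the two regimes into the single conclusion $\mathbb{I}+\mathbb{H}(x,t)>0$ (ruling out eigenvalue $-1$ uniformly in $x$ and $t>0$) is the delicate step, and I would treat it by splitting $\mathbb{R}$ into $\{|R|=1\}$ and its complement and combining Theorem~\ref{Thm 5.6'} with the argument behind Theorem~\ref{rem5.8}.
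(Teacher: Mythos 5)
Your part (1) is exactly the paper's argument and is fine. The trouble begins in (2) and becomes decisive in (3), and it has a common root: you never come to grips with the bound-state term $\phi_{x,t}(k)=\int_{0}^{h_{0}}\xi_{x,t}(is)(s+ik)^{-1}d\rho(s)$ in the symbol (\ref{eq9.9}). Under Hypothesis \ref{hyp1.1} the measure $\rho$ need not be discrete (in Example \ref{example} it is absolutely continuous), $\phi_{x,t}$ is in general only in BMO, and $\mathbb{H}(\phi_{x,t})$ by itself need not be compact or even bounded unless $\rho$ is Carleson (see the remark following Theorem \ref{th9.5}). Consequently $\varphi_{x,t}$ is neither contractive nor obviously in $H^{\infty}+C$, so neither Theorem \ref{Thm 5.6'} nor Theorem \ref{rem5.8} applies to it as it stands. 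For (2), the paper's mechanism is a contour deformation of $\widetilde{\mathbb{P}}_{-}(\xi_{x,t}A_{0})$ to $\mathbb{R}+ih$, $h>h_{0}$ (formula (\ref{eq8.2.2})): the deformation produces a continuous function $\Phi$ plus a contribution from the cut $i\Delta$ that \emph{exactly cancels} $\phi_{x,t}$, reducing the whole symbol, modulo analytic parts, to $\Phi+\xi_{x,t}r_{0}\in H^{\infty}+C$ (Sarason algebra plus Theorem \ref{thGru01}), with well-definedness supplied by Proposition \ref{prop4.4.1}. Your alternative route through (\ref{factor}) fails twice over: (\ref{factor}) is proved only when $\Delta=\{\kappa_{n}\}\in l^{1}$, whereas here the negative spectrum may be essential; and even where it holds, $\xi_{x,t}A_{a}=\xi_{x,t}\xi_{a}^{-1}S_{a}\overline{B}$ contains $\overline{B}=1/B$ with $B$ an infinite Blaschke product, which is precisely \emph{not} in $H^{\infty}+C$ --- that is the content of Lemma \ref{lem5.6} --- so $\xi_{x,t}A_{a}$ alone does not yield a compact Hankel operator; only its combination with $\phi_{x,t}$ does.

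For (3) the missing idea is the heart of the paper's proof. One cannot hope to show $\Vert\mathbb{H}(x,t)\Vert<1$, since the symbol is not contractive; instead the paper exploits that the measure term contributes a nonnegative form, $\langle\mathbb{H}(\phi_{x,t})f,f\rangle=2\pi\int_{0}^{h_{0}}|f(is)|^{2}\,d\rho(s)\geq0$ (formula (\ref{eq8.5.2})), so that a putative eigenfunction $f$ with eigenvalue $-1$ must \emph{vanish on} $\operatorname*{Supp}\rho$. Then comes a trichotomy entirely absent from your sketch: if $\operatorname*{Supp}\rho$ is a uniqueness set for $H^{2}$ then $f\equiv0$; otherwise $\operatorname*{Supp}\rho=\{i\kappa_{n}\}$ satisfies the Blaschke condition, one factors $f=BF$, and the quadratic form becomes $\langle F,F\rangle+\langle\mathbb{H}(\xi_{x,t}BR)F,BF\rangle$. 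It is only the \emph{modified} symbol $\xi_{x,t}BR$ that lies in $H^{\infty}+C$, because multiplication by $B$ absorbs the poles of $A_{0}$ ($BA_{0}\in H^{\infty}$ via (\ref{factor}), now legitimately available since $\{\kappa_{n}\}\in l^{1}$); only then do Theorem \ref{rem5.8} (if $R$ is not unimodular) or Theorem \ref{th5.7} with Lemma \ref{lem5.6} (if $|R|=1$ a.e.) give $\Vert\mathbb{H}(\xi_{x,t}BR)\Vert<1$ and the contradiction. Your closing device of ``splitting $\mathbb{R}$ into $\{|R|=1\}$ and its complement'' is not a valid operator-theoretic move: Hankel operators do not localize over subsets of the line, and the dichotomy used in the paper (either $|R|<1$ on a set of positive measure, or $|R|=1$ a.e.) is global, exactly as in the hypotheses of Theorems \ref{th5.7} and \ref{rem5.8}. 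Without the positivity of the $\rho$-form, the vanishing of $f$ on $\operatorname*{Supp}\rho$, and the factorization $f=BF$, your argument for (3) cannot get started in either regime.
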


\begin{proof}
Without loss of generality we assume $a=0$. Consider the principal part (see
(\ref{eq4.3})) of $\xi_{x,t}A_{0}$:
\[
\left(  \widetilde{\mathbb{P}}_{-}\xi_{x,t}A_{0}\right)  (k)=-\frac{1}{2\pi
i}\int\left(  \frac{1}{\lambda-(k-i0)}-\frac{1}{\lambda+i)}\right)  \xi
_{x,t}(\lambda)A_{0}(\lambda)d\lambda
\]
Deform the contour of integration to $R+ih,\ h>h_{0}$. It can be easily
justified due to (\ref{uniform decay}) and the rapid decay of $\xi
_{x,t}(\lambda)$ if $\left\vert \lambda\right\vert \rightarrow\infty$ along
$R+ih$ for arbitrary $h>0$.

So, we have
\begin{align}
\left(  \widetilde{\mathbb{P}}_{-}\xi_{x,t}A_{0}\right)  (k)=  &  -\frac
{1}{2\pi i}\int_{\mathbb{R}+ih}\frac{\xi_{x,t}(\lambda)A_{0}(\lambda)}%
{\lambda-k}\ d\lambda-\int_{\mathbb{R}+ih}\frac{\xi_{x,t}(\lambda
)A_{0}(\lambda)}{\lambda+i}\ d\lambda\nonumber\\
&  -\int_{0}^{h_{0}}\frac{\xi_{x,t}(is)}{s+ik}\ d\rho(s)-\frac{1}{i}\int%
_{0}^{h_{0}}\frac{\xi_{x,t}(is)\ d\rho(s)}{s+1}. \label{eq8.2.2}%
\end{align}
It is easy to see that the function
\[
\Phi(k):=-\frac{1}{2\pi i}\int_{\mathbb{R}+ih}\frac{\xi_{x,t}(\lambda
)A_{0}(\lambda)}{\xi-k}\ d\lambda
\]
belong to $C$. Since $\xi_{x,t}A_{0}\in L^{\infty}$ the third term in
(\ref{eq8.2.2})
\begin{equation}
\phi_{x,t}(k):=\int_{0}^{h_{0}}\frac{\xi_{x,t}(is)}{s+ik}\ ds \label{eq8.3.2}%
\end{equation}
belongs to BMO and by Proposition \ref{prop4.4.1} operator $\mathbb{H}(x,t)$
is well-defined and bounded. Statement (1) follows then from Propositions
\ref{prop4.7} and \ref{props of R}.

To prove (2)\ we observe (see (\ref{eq9.9}), Theorem \ref{th4.5} and
(\ref{eq4.5})) that
\[
\mathbb{H}(x,t)=\mathbb{H}\left(  \Phi+r_{0}\xi_{0}^{-1}\right)  .
\]
Since $\Phi\in C$ and $r_{0}\in C$ (due to (\ref{r_a})) we see that Theorem
\ref{thHart} implies statement (2).

We are ready now to prove part (3). Let $f\in H^{2}$, then
\begin{equation}
\langle\left(  I+\mathbb{H}(x,t)\right)  f,f\rangle=\langle f,f\rangle
+\langle\mathbb{H}(\xi_{x,t}R)f,f\rangle+\langle\mathbb{H}(\phi_{x,t}%
)f,f\rangle, \label{eq8.4.2}%
\end{equation}
where $\phi_{x,t}$ is given by (\ref{eq8.3.2}). Since $\phi_{x,t}\in$ BMO the
last term of (\ref{eq8.4.2}) exists and
\begin{align}
\langle\mathbb{H}(\phi_{x,t})f,f\rangle &  =-i\int_{0}^{h_{0}}d\rho(s)\langle
J\mathbb{P}_{-}\frac{f}{\cdot-is},f\rangle\nonumber\\
&  =i\int_{0}^{h_{0}}d\rho(s)f(is)\langle\frac{1}{\cdot+is},f\rangle=2\pi
\int_{0}^{h_{0}}\left\vert f(is)\right\vert ^{2}d\rho(s)\geq0. \label{eq8.5.2}%
\end{align}
Since $\Vert\xi_{x,t}R\Vert_{\infty}\leq1$ we have
\begin{equation}
\langle\left(  I+\mathbb{H}(\xi_{x,t}R)\right)  f,f\rangle\geq0.
\label{eq8.6.2}%
\end{equation}
Suppose that (3) does not hold. Then (\ref{eq8.5.2}) and (\ref{eq8.6.2})
imply
\begin{equation}
\int_{0}^{h_{0}}|f(is)|^{2}d\rho(s)=0. \label{eq8.7.2}%
\end{equation}

We need to consider three cases.

\textbf{Case 1}. The support of $\rho$ is a uniqueness set for $H^{2}$. Then
(\ref{eq8.7.2}) implies that $f\equiv0$ and statement (3) trivially follows.

Let now $\operatorname*{Supp}\rho$ be a non uniqueness set for $H^{2}$. Then
$\operatorname*{Supp}\rho=\{i\kappa_{n}\}$ with $\kappa_{n}>0$ subject to the
Blaschke condition and condition (\ref{eq8.7.2}) holds iff $f(i\kappa_{n})=0.$
It follows from the canonical factorization theorem that $f=BF$, where $B$ is
the Blaschke product with zeros $\{i\kappa_{n}\}$ and $F\in H^{2}$. Thus
(\ref{eq8.4.2}) reads
\begin{equation}
\langle\left(  I+\mathbb{H}(x,t)\right)  f,f\rangle=\langle F,F\rangle
+\langle\mathbb{H}(\xi_{x,t}BR)F,BF\rangle. \label{eq8.8.2}%
\end{equation}
By Proposition \ref{combin} (see (\ref{factor})) $BA_{0}\in H^{\infty}$.
Moreover, by Proposition \ref{step like refl} $r_{0}\ \xi_{0}^{-1}\in C$ and
by Theorem \ref{thGru01} $\xi_{x,t}\in H^{\infty}+C$. Thus we have $\xi
_{x,t}BR\in H^{\infty}+C$.

\textbf{Case 2.} $\operatorname*{Supp}\rho$ is not a uniqueness set and $R$ is
not unimodular function. By Theorem \ref{rem5.8} we have
\begin{equation}
\Vert\mathbb{H}(\xi_{x,t}BR)\Vert<1 \label{eq8.9.2}%
\end{equation}
and (\ref{eq8.9.2}) implies
\[
|\langle\mathbb{H}(\xi_{x,t}BR)F,BF\rangle|\leq\Vert\mathbb{H}(\xi
_{x,t}BR)\Vert\cdot\Vert F\Vert_{2}\Vert BF\Vert_{2}<\Vert F\Vert_{2}^{2},
\]
which immediately yields statement (3).

\textbf{Case 3.} $\operatorname*{Supp}\rho$ is not a uniqueness set and $R$ is
a unimodular function. Then by Theorem \ref{thGru01} (see (\ref{eq5.3}))
$\xi_{x,t}=B_{x,t}u_{x,t}$ with some infinite Blaschke product $B_{x,t}$ and
unimodular function $u_{x,t}$ from $C$. Therefore $\xi_{x,t}BR\in H^{\infty
}+C$ and by Lemma \ref{lem5.6} $\xi_{x,t}BR$ is not invertible in $H^{\infty
}+C$. By Theorem \ref{th5.7} then (\ref{eq8.9.2}) holds and as in Case 2
statement (3) follows.
\end{proof}

\begin{remark}
\label{rem9.6}Theorem \ref{th9.5} says that $(\mathbb{I}+\mathbb{H}%
(x,t))^{-1}$ is a bounded operator for any $x\in\mathbb{R}$ and $t>0$, which
is of course of a particular importance for validation of the IST. Cases 1,2
in the proof are easy and were done in \cite{RybNON2011}. Case 3 is much more
subtle. Under assumption that $h_{0}=0$ in Hypothesis \ref{hyp1.1} and
$q_{a}=0$ it was proven in our \cite{GruRyPAMS13}. Then in \cite{GruRyOTAA13}
we relaxed the condition $h_{0}=0$ but imposed some extra conditions on the
negative spectrum of $\mathbb{L}_{q}$. In the full generality Theorem
\ref{th9.5} appears first in this paper and is one of our main results.
\end{remark}

\begin{remark}
Theorem \ref{th9.5} does not say that if we split $\mathbb{H}\left(
x,t\right)  $ into two Hankel operators corresponding to the two pieces in
(\ref{eq9.9}) then each Hankel operator is compact\footnote{Some subtle
conditions for $\mathbb{H}(\phi_{x,t})\in\mathfrak{S}_{\infty}$ are studied in
our \cite{GruRyOTAA13}.}. However, if we notice that $\mathbb{H}(\phi_{x,t})$
is unitary equivalent to the integral operator (\ref{eq4.10}) on
$L^{2}(\mathbb{R}_{+})$ with the continuous kernel $h\left(  \cdot\right)
=\int_{0}^{h_{0}}e^{-s(\cdot)}\xi_{x,t}\left(  is\right)  d\rho(s)$ then%
\[
\operatorname*{tr}\mathbb{H}(\phi_{x,t})=\int_{0}^{\infty}dz\int_{0}^{h_{0}%
}e^{-2zs}\xi_{x,t}\left(  is\right)  d\rho(s)=\frac{1}{2}\int_{0}^{h_{0}}%
\xi_{x,t}\left(  is\right)  \frac{d\rho(s)}{s},
\]
which means that $\mathbb{H}(\phi_{x,t})\in\mathfrak{S}_{1}$ iff $\int%
_{0}^{h_{0}}d\rho(s)/s$ is bounded. It is clearly the case in Example
\ref{example} but untrue in general. Exploiting the same unitary equivalence
argument, one can easily prove that $\mathbb{H}(\phi_{x,t})$ is bounded iff
$\rho$ is a Carleson measure, i.e.
\begin{equation}
\sup\left\{  \frac{1}{\delta}\int_{0}^{\delta}d\rho:\delta>0\right\}  <\infty.
\label{eq9.2}%
\end{equation}
We find this result quite interesting as conditions like (\ref{eq9.2}) are
frequently a priori assumed even in the case when $q$ tends to a constant at
$-\infty$ (c.f. \cite{Venak86}). Also note that (\ref{eq9.2}) means that $0$
must not be an eigenvalue of $\mathbb{L}_{q}$. Of course, it follows from
(\ref{G+}) that there are no positive (imbedded) bound states either. Thus,
Hypothesis \ref{hyp1.1} imposes a restriction on the spectrum: the discrete
spectrum of $\mathbb{L}_{q}$ could only be negative.
\end{remark}

\section{Singular numbers of the IST Hankel operator\label{s-numbers}}

As well-known a bounded operator $\mathbb{A}$ is compact ($\mathbb{A}%
\in\mathfrak{S}_{\infty}$) if it can be uniformly approximated by rank $n$
operators $\mathbb{A}_{n}$. Singular numbers $s_{n}\left(  \mathbb{A}\right)
$ give an accurate quantitative description of the rate of convergence
$\mathbb{A}_{n}\rightarrow\mathbb{A}$. In the context of Hankel operators
singular numbers gain a whole new meaning as $s_{n}\left(  \mathbb{H}\left(
\varphi\right)  \right)  $ are directly related to best rational
approximations of $\varphi$. Consequently, since about 1970, sparked by
seminal works due to Adamyan-Arov-Krein, a large variety of issues related to
singular numbers\footnote{In particular, membership in $\mathfrak{S}_{p}$
classes.} have been extensively studied (see, e.g. \cite{Nik2002},
\cite{Peller2003} and the extensive literature cited therein). We however are
not sure if any of these has been used in soliton theory. In this section we
shall demonstrate how the Adamyan-Arov-Krein classical theory beautifully
yields subtle relations between the decay of $s_{n}\left(  \mathbb{H}\left(
x,t\right)  \right)  $ and properties of the initial data $q$. In the
subsequent section this will be translated into substantial conclusions on the
initial value problem for the KdV equation. Due to space limitations, we focus
on the opportunities that the theory of Hankel operators promises rather than
completeness of our results.

\subsection{Some general statements on singular numbers of Hankel operators}

Let $\mathcal{R}_{n\text{ }}$denote the set of rational functions bounded at
infinity with all poles in the upper half plane of total multiplicity $\leq
n$. The following theorems are fundamental in the study of singular numbers of
Hankel operators.

\begin{theorem}
[Adamyan-Arov-Krein, 1971]\label{AAK} Let $\varphi\in L^{\infty}$. Then
\[
s_{n}\left(  \mathbb{H}\left(  \varphi\right)  \right)  =\operatorname*{dist}%
\nolimits_{L^{\infty}}\left(  \varphi,\mathcal{R}_{n}+H^{\infty}\right)  .
\]

\end{theorem}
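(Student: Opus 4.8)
The plan is to prove the two inequalities $s_{n}(\mathbb{H}(\varphi))\leq\operatorname*{dist}_{L^{\infty}}(\varphi,\mathcal{R}_{n}+H^{\infty})$ and its reverse separately, the first being elementary and the second being the actual content of the theorem. For the inequality $s_{n}\leq\operatorname*{dist}$ I would start from the observation that a Hankel operator only sees the co-analytic part of its symbol: for $h\in H^{\infty}$ one has $\mathbb{H}(h)=0$ by Theorem \ref{th4.5}, while for $r\in\mathcal{R}_{n}$ (rational, bounded at infinity, with all poles in $\mathbb{C}^{+}$ of total multiplicity $\leq n$) the operator $\mathbb{H}(r)$ has rank $\leq n$, since $r$ is analytic in $\mathbb{C}^{-}$ and its principal part is a finite linear combination of the $\leq n$ Cauchy kernels attached to its poles. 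Hence for any $\psi=r+h\in\mathcal{R}_{n}+H^{\infty}$,
\[
\|\varphi-\psi\|_{\infty}\geq\|\mathbb{H}(\varphi-\psi)\|=\|\mathbb{H}(\varphi)-\mathbb{H}(r)\|\geq s_{n}(\mathbb{H}(\varphi)),
\]
the last step being the Courant--Fischer (min--max) characterization of $s_{n}$ as the distance from $\mathbb{H}(\varphi)$ to the operators of rank $\leq n$. Taking the infimum over $\psi$ gives $\operatorname*{dist}_{L^{\infty}}(\varphi,\mathcal{R}_{n}+H^{\infty})\geq s_{n}(\mathbb{H}(\varphi))$.

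The reverse inequality amounts to producing a single $\psi\in\mathcal{R}_{n}+H^{\infty}$ realizing $\|\varphi-\psi\|_{\infty}\leq s:=s_{n}(\mathbb{H}(\varphi))$; this is the Adamyan--Arov--Krein construction, which I would build from a Schmidt pair. Choosing unit vectors $v,w\in H^{2}$ with $\mathbb{H}(\varphi)v=sw$ and $\mathbb{H}(\varphi)^{\ast}w=sv$, the defining relation $\mathbb{J}\mathbb{P}_{-}(\varphi v)=sw$ rewrites (using $\mathbb{J}^{2}=\mathbb{I}$ and $\mathbb{J}w\in H_{-}^{2}$) as $\mathbb{P}_{-}(\varphi v)=s\mathbb{J}w$, so that $\varphi v-s\mathbb{J}w=\mathbb{P}_{+}(\varphi v)\in H^{2}$. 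This suggests the candidate
\[
\psi:=\varphi-s\,\frac{\mathbb{J}w}{v},\qquad\text{so that}\qquad\psi v=\mathbb{P}_{+}(\varphi v)\in H^{2}.
\]
Two properties must then be verified. The first is the modulus identity $|\mathbb{J}w|=|v|$ a.e. on $\mathbb{R}$, obtained by playing the two Schmidt relations against each other; it forces $\varphi-\psi=s\,\mathbb{J}w/v$ to be of constant modulus $s$, whence $\|\varphi-\psi\|_{\infty}=s$. The second is that the co-analytic part of $\psi$ is rational with at most $n$ poles in $\mathbb{C}^{+}$, i.e. $\psi\in\mathcal{R}_{n}+H^{\infty}$.

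This last step is the main obstacle. The idea is to pass to inner--outer factorizations $v=\Theta_{v}v_{0}$ and $\mathbb{J}w=\Theta_{w}w_{0}$; the unimodularity of $\mathbb{J}w/v$ reduces the quotient to a ratio of inner functions times a unimodular outer factor, and the requirement $\psi v\in H^{2}$ forces the inner part to be a finite Blaschke product whose degree equals the number of singular values of $\mathbb{H}(\varphi)$ strictly exceeding $s$. Since $s=s_{n}(\mathbb{H}(\varphi))$, that number is at most $n$, yielding the pole count and hence $\psi\in\mathcal{R}_{n}+H^{\infty}$. I would treat the base case $n=0$ (Nehari's theorem, where $\mathcal{R}_{0}+H^{\infty}=H^{\infty}$ and no Blaschke factor appears) first as the prototype, and I expect the careful bookkeeping of the zeros of $v$ and $w$ --- ensuring that $\mathbb{J}w/v$ is genuinely bounded away from the poles and that the degree count is sharp --- to be the most delicate part of the argument.
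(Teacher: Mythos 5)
You should first note that the paper contains no proof of Theorem \ref{AAK}: it is quoted as a classical result of Adamyan--Arov--Krein (1971), with \cite{Nik2002} and \cite{Peller2003} as references, so what you are reconstructing is the classical argument rather than anything in the paper. Your easy inequality is complete and correct: $\mathbb{H}(h)=0$ for $h\in H^{\infty}$ (Theorem \ref{th4.5}), a partial-fraction decomposition together with (\ref{P_}) shows $\operatorname{rank}\mathbb{H}(r)\leq n$ for $r\in\mathcal{R}_{n}$ (the elementary half of Kronecker's theorem; for multiple poles the powers of Cauchy kernels contribute rank equal to the multiplicity, so the total count is still right), and the chain $\Vert\varphi-\psi\Vert_{\infty}\geq\Vert\mathbb{H}(\varphi)-\mathbb{H}(r)\Vert\geq s_{n}$ is valid with the indexing convention $s_{0}(\mathbb{A})=\Vert\mathbb{A}\Vert$. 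Your identities $\psi v=\mathbb{P}_{+}(\varphi v)\in H^{2}$ and $\vert\mathbb{J}w\vert=\vert v\vert$ a.e.\ are also right (the latter comes from playing $\mathbb{P}_{-}(\varphi v)=s\,\mathbb{J}w$ against $\mathbb{P}_{+}(\bar{\varphi}\,\mathbb{J}w)=sv$ and using that a real-valued function with $H^{1}(\mathbb{R})$ boundary values vanishes). One of your worries is unnecessary: since $v\in H^{2}$ is nonzero, $v\neq0$ a.e.\ on $\mathbb{R}$, so $\mathbb{J}w/v$ is a well-defined unimodular boundary function and $\psi\in L^{\infty}$; no boundedness "away from the poles" and no analyticity of $\mathbb{J}w/v$ is needed (or true).

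There are, however, two genuine gaps. First, you assume a Schmidt pair for $s=s_{n}$ exists, but the theorem is stated for arbitrary $\varphi\in L^{\infty}$, where $\mathbb{H}(\varphi)$ need not be compact: $s_{n}$ can equal the essential norm $\Vert\mathbb{H}(\varphi)\Vert_{e}$ without being an eigenvalue of $\left(\mathbb{H}^{\ast}\mathbb{H}\right)^{1/2}$, and then your construction has no starting point. The general case requires a separate limiting argument (AAK use $\lim_{m}s_{m}=\Vert\mathbb{H}\Vert_{e}=\operatorname{dist}_{L^{\infty}}(\varphi,H^{\infty}+C)$, a refinement of Hartman's theorem \ref{thHart}, and a weak-$\ast$ compactness argument showing the distance to $\mathcal{R}_{n}+H^{\infty}$ is attained). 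For the compact operators this paper actually uses the case split does close: $s_{n}>0$ yields a Schmidt pair, while $s_{n}=0$ means $\operatorname{rank}\mathbb{H}(\varphi)\leq n$ --- but then you need the \emph{converse} half of Kronecker's theorem (rank $\leq n$ forces the co-analytic part into $\mathcal{R}_{n}$), which you never prove. Second, and more seriously, the degree count --- that $\psi v\in H^{2}$ plus unimodularity forces $\widetilde{\mathbb{P}}_{-}\psi$ to be rational with at most $n$ poles --- is exactly the deep content of the theorem, and you state it as an expectation rather than prove it. The known arguments require showing that $\mathbb{J}w/v$ is independent of the choice of Schmidt pair, analyzing the entire Schmidt subspace $\ker\left(\mathbb{H}^{\ast}\mathbb{H}-s^{2}\mathbb{I}\right)$ and its minimal inner divisor (or an induction on $n$ via an index computation for the Toeplitz operator with symbol $\mathbb{J}w/v$), to bound the Blaschke degree by the number $\mu$ of singular values strictly exceeding $s$; note also that your "degree equals $\mu$" should be "degree at most $\mu$" --- only $\mu\leq n$ is needed. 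As written, the proposal is a faithful roadmap of the AAK proof with its central lemma missing.
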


\begin{theorem}
[Bernstein-Jackson, 1910]\label{Jackson}Let $\varphi\in C^{m}$. Then
\[
\operatorname*{dist}\nolimits_{L^{\infty}}\left(  \varphi,\mathcal{R}%
_{n}+H^{\infty}\right)  \lesssim\left\Vert \varphi^{\left(  m\right)
}\right\Vert _{\infty}/n^{m}.
\]

\end{theorem}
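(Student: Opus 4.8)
The plan is to treat this as a statement in pure approximation theory and bound the distance directly by producing good approximants; the Hankel-operator reading through Theorem \ref{AAK} is convenient motivation but is not logically needed here. The first reduction is that only the co-analytic part of $\varphi$ is relevant: since $\mathcal{R}_n$ consists of rational functions with all poles in $\mathbb{C}^{+}$ (hence bounded and analytic in $\mathbb{C}^{-}$), the class $\mathcal{R}_n+H^{\infty}$ is unchanged by adding an arbitrary $H^{\infty}$ summand, so the problem is to approximate the piece of $\varphi$ analytic in $\mathbb{C}^{-}$ by the rational co-analytic functions in $\mathcal{R}_n$. The natural device is a Cayley transform carrying $\mathbb{R}$ to the unit circle $\mathbb{T}$: under it $H^{\infty}(\mathbb{C}^{+})$ becomes $H^{\infty}(\mathbb{D})$ and $\mathcal{R}_n$ becomes the span of $\{z^{-1},\dots,z^{-n}\}$, i.e.\ rational functions with a single pole of order $\le n$ at the origin. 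Thus $\mathcal{R}_n+H^{\infty}$ corresponds precisely to the class against which the classical Jackson theory measures best approximation on $\mathbb{T}$, and the real-line estimate becomes the circle estimate for the transferred symbol $\psi$.

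With $\psi$ on $\mathbb{T}$, I would construct a near-best approximant by convolving $\psi$ with a de la Vall\'ee Poussin (Jackson-type) summability kernel $V_n$. The output $V_n\psi$ is a trigonometric polynomial of degree $\lesssim n$; its nonnegative-frequency part lies in $H^{\infty}(\mathbb{D})$, while its negative-frequency part is a polynomial in $z^{-1}$ of degree $\le n$, hence an element of $\mathcal{R}_n$. Consequently $\psi-V_n\psi$ is, after a harmless relabelling of $n$, exactly the error of approximating $\psi$ from $\mathcal{R}_n+H^{\infty}$, and the whole problem collapses to the scalar Jackson inequality $\Vert\psi-V_n\psi\Vert_{L^{\infty}(\mathbb{T})}\lesssim E_n(\psi)_\infty\lesssim\Vert\psi^{(m)}\Vert_\infty/n^{m}$, where $E_n$ is best approximation by trigonometric polynomials of degree $n$.

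The proof therefore rests on two points. The first is the sharp Jackson rate itself: plain (Dirichlet) truncation of the Fourier series only yields $n^{-(m-1)}$ because the tail decays too slowly, so one must use a smoothing kernel with the correct support and moment structure and prove $\Vert\psi-V_n\psi\Vert_\infty\lesssim\Vert\psi^{(m)}\Vert_\infty/n^{m}$ by expanding $\psi$ in a Taylor polynomial of order $m$ against the kernel and exploiting its cancellation; this is the classical analytic core. The second, and the point I expect to be the main obstacle, is \emph{controlling the conformal distortion while keeping only the top derivative on the right-hand side}. The Cayley map sends $\infty$ to a boundary point of $\mathbb{T}$, so differentiating $\psi=\varphi\circ(\text{Cayley})$ introduces weights that blow up there, and a naive chain rule bounds $\Vert\psi^{(m)}\Vert_{L^{\infty}(\mathbb{T})}$ by a sum $\sum_{j\le m}\Vert\varphi^{(j)}\Vert_{L^{\infty}(\mathbb{R})}$ rather than by $\Vert\varphi^{(m)}\Vert_\infty$ alone. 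The resolution is that $\varphi\in C^{m}$ forces $\varphi$ and its lower derivatives to possess limits at $\pm\infty$ (membership in $C$), which both tames these weights and lets one first subtract from $\varphi$ a correction lying in $\mathcal{R}_n+H^{\infty}$ that absorbs the lower-order behaviour, so that the residual error is governed by $\varphi^{(m)}$ only. Handling this absorption cleanly, so that the final bound carries exactly $\Vert\varphi^{(m)}\Vert_\infty$, is where the argument requires the most care.
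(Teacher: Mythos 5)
Your route is the standard one behind this citation --- the paper itself gives no proof of Theorem \ref{Jackson} (it is quoted as a classical fact and used only through Theorem \ref{AAK}) --- and on the circle your argument is complete: discard the analytic part, transplant by the Cayley map, approximate by a de la Vall\'ee Poussin mean whose negative-frequency half is a polynomial in $z^{-1}$, and invoke the Jackson rate (your remark that Dirichlet truncation is lossy is correct, and with Favard's form of Jackson the constant even stays bounded in $m$, which matters because the paper later optimizes over $m$ in the proof of Theorem \ref{Thm on s-numbers}). One small slip: the Cayley image of $\mathcal{R}_{n}$ is the set of rationals with at most $n$ poles \emph{anywhere} in $\mathbb{D}$, not the span of $z^{-1},\dots,z^{-n}$; but that span corresponds to $\left(  (k+i)/(k-i)\right)  ^{l}$, $l\leq n$, a subclass of $\mathcal{R}_{n}$, so using it only strengthens the upper bound and is harmless.

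The genuine gap is the step you yourself flag as the delicate one, and it cannot be repaired, because the inequality as literally stated --- with a constant independent of $\varphi$ --- is \emph{false} on the line. Dilations $\varphi\mapsto\varphi(\lambda\cdot)$, $\lambda>0$, preserve $H^{\infty}$, preserve $\mathcal{R}_{n}$ (poles stay in $\mathbb{C}^{+}$) and preserve the $L^{\infty}$ norm, so $\operatorname{dist}_{L^{\infty}}\left(  \varphi(\lambda\cdot),\mathcal{R}_{n}+H^{\infty}\right)$ is independent of $\lambda$, whereas $\left\Vert \partial^{m}\left[  \varphi(\lambda\cdot)\right]  \right\Vert _{\infty}=\lambda^{m}\left\Vert \varphi^{(m)}\right\Vert _{\infty}\rightarrow0$. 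Taking any nonzero real $\varphi\in C_{c}^{\infty}$ (then by Theorem \ref{AAK} and Kronecker's theorem $s_{n}(\mathbb{H}(\varphi))=\operatorname{dist}>0$ for every $n$, since a rational function vanishing on a half-line vanishes identically) and letting $\lambda\rightarrow0$ gives a contradiction. Note that this counterexample family has \emph{all} derivatives tending to $0$ at $\pm\infty$, so the mechanism you propose --- limits at $\pm\infty$ taming the conformal weights, plus a correction from $\mathcal{R}_{n}+H^{\infty}$ absorbing lower-order behaviour --- cannot produce a bound by the top derivative alone: with $k=-\cot(\theta/2)$ one has $dk/d\theta=(1+k^{2})/2$, so $\psi^{(m)}$ carries weights of size $(1+k^{2})^{j}$ against $\varphi^{(j)}$, and membership of $\varphi^{(j)}$ in $C$ yields no decay whatsoever. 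The statement becomes true, by exactly your kernel argument, once a scale-breaking hypothesis is added, e.g. $\varphi^{(j)}(k)=O\left(  |k|^{-2j}\right)$ for $j\leq m$ (equivalently, $C^{m}$-smoothness of the transplant at $z=1$); and this is what actually holds where the paper applies the theorem: the symbol $\Lambda$ in the proof of Theorem \ref{Thm on s-numbers} is a Cauchy-type transform of rapidly decaying data, so $\partial^{j}\Lambda$ decays like $|k|^{-j-1}$ and the circle estimate applies with comparable norms. In short: right route, correctly located obstacle, but the proposed resolution of that obstacle does not exist for the theorem as stated; it must be fixed by a decay hypothesis at $\pm\infty$ (or a modulus-of-smoothness formulation for the transplanted symbol), not by a cleverer absorption argument.
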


We will need a simple

\begin{lemma}
\label{Tech lemma}Let $C,b>0,$ $p\geq1$ and $\left\{  s_{n}\right\}  _{n\geq
1}$ be a positive sequence. If
\begin{equation}
s_{n}\leq\frac{C\left(  m!\right)  ^{p}}{b^{m}}\frac{1}{n^{m}},\ \ \ \forall
m\in\mathbb{N}_{0} \label{ined cond 1}%
\end{equation}
then
\begin{equation}
s_{n}\leq C2^{p}\exp\left\{  -\left(  p/2\right)  \left(  bn\right)
^{1/p}\right\}  . \label{tech ineq 1}%
\end{equation}

\end{lemma}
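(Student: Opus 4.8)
The plan is to exploit that the hypothesis (\ref{ined cond 1}) holds \emph{simultaneously} for every $m\in\mathbb{N}_0$, so for each fixed $n$ I am free to pick the value of $m$ that makes the right-hand side smallest. Fix $n$ and abbreviate $u:=(bn)^{1/p}>0$, so that $b^{m}n^{m}=u^{pm}$ and (\ref{ined cond 1}) reads
\[
s_{n}\le C\left(\frac{m!}{u^{m}}\right)^{p},\qquad m\in\mathbb{N}_{0}.
\]
Since the target bound (\ref{tech ineq 1}) is exactly $C\left(2e^{-u/2}\right)^{p}$, the whole lemma collapses to the following scalar claim, from which $p$ has disappeared entirely: for every $u>0$ there is an integer $m\ge 0$ with $m!/u^{m}\le 2e^{-u/2}$.

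To locate a good $m$ I note that the ratio of consecutive terms $\dfrac{(m+1)!/u^{m+1}}{m!/u^{m}}=\dfrac{m+1}{u}$ is $<1$ precisely while $m+1<u$, so $m!/u^{m}$ is minimized near $m=\lfloor u\rfloor$. For $u$ large I would therefore take $m=\lfloor u\rfloor$ and insert Stirling's bound $m!\le e\sqrt{m}\,(m/e)^{m}$. Since $m\le u$ gives $(m/u)^{m}\le 1$, and $m\ge u-1$,
\[
\frac{m!}{u^{m}}\le e\sqrt{m}\,e^{-m}\le e^{2}\sqrt{u}\,e^{-u}.
\]

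The key structural point is that the target rate $e^{-u/2}$ is only \emph{half} the natural rate $e^{-u}$ produced by the near-optimal $m$; this spare factor $e^{-u/2}$ is exactly what absorbs the algebraic prefactor $e^{2}\sqrt{u}$. Indeed $\sqrt{u}\,e^{-u/2}\le 2e^{-2}$ once $u\ge 4$, whence $e^{2}\sqrt{u}\,e^{-u}\le 2e^{-u/2}$ for all such $u$, which is the scalar claim. It is precisely this headroom that lets one land on the clean constant $2^{p}$ instead of tracking the Stirling constant. The only remaining work, and the one fussy point, is the bounded range $0<u<4$, where $\lfloor u\rfloor$ is too small to be useful; there I would verify the scalar claim directly, using $m=0$ on $0<u\le 2\ln 2$ (so that $2e^{-u/2}\ge 1=0!/u^{0}$) and $m=1$ on $2\ln 2\le u\le 4$ (so that $2e^{-u/2}\ge 1/u$, i.e. $2ue^{-u/2}\ge 1$, which holds since the left side is unimodal on this interval and exceeds $1$ at both endpoints). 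Assembling the three ranges yields the scalar claim for all $u>0$, hence (\ref{tech ineq 1}). I expect no genuine obstacle beyond this constant bookkeeping; the single thing to get right is matching the half-rate $p/2$ in the statement to the loss incurred by Stirling, which is the whole reason that particular rate appears.
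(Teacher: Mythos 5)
Your proof is correct, but it takes a genuinely different route from the paper's. The paper does not optimize over $m$ at all: after normalizing $C=b=1$ it rewrites the hypothesis as $\frac{n^m}{(m!)^p}s_n\le 1$, multiplies by $2^{-m}$, and sums over $m\ge 0$ to get $s_n\sum_{m\ge0}\frac{n^m}{(m!)^p2^m}\le 2$; it then bounds the sum from below by Jensen's (power-mean) inequality with the geometric weights $2^{-m}$, namely $\sum_{m\ge0}\frac{n^m}{(m!)^p2^m}\ge 2^{1-p}\bigl(\sum_{m\ge0}\frac{(n^{1/p}/2)^m}{m!}\bigr)^p=2^{1-p}e^{(p/2)n^{1/p}}$, and (\ref{tech ineq 1}) drops out in three lines with no case analysis and no Stirling. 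You instead fix $n$, choose the near-optimal $m=\lfloor u\rfloor$ with $u=(bn)^{1/p}$, reduce to the $p$-free scalar claim $\min_{m}\,m!/u^m\le 2e^{-u/2}$, and settle the bounded range by hand; all your estimates check out ($\sqrt{u}\,e^{-u/2}$ is decreasing for $u\ge1$ with value exactly $2e^{-2}$ at $u=4$, and $2ue^{-u/2}$ is unimodal with endpoint values $2\ln 2>1$ and $8e^{-2}>1$). The trade-offs are instructive: the paper's summation trick is shorter, and both the factor $1/2$ in the exponent and the constant $2^p$ fall out of the single choice of weight $2^{-m}$, but the Jensen step uses convexity of $t\mapsto t^p$ and so genuinely needs $p\ge1$; your pointwise optimization costs Stirling plus endpoint bookkeeping, but it works for every $p>0$, and it exposes the sharper underlying fact that the optimal-$m$ bound decays like $e^{-pu}$ up to a polynomial factor, so that (\ref{tech ineq 1}) is a deliberately slack version in which the spare $e^{-u/2}$ absorbs $\sqrt{u}$ --- exactly the structural point you identified.
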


\begin{proof}
Without loss of generality we may set $C=1=b$. Multiplying $\frac{n^{m}%
}{\left(  m!\right)  ^{p}}s_{n}\leq1$ by $2^{-m}$ and then summing on $m\geq0$
yields%
\begin{equation}
s_{n}%
{\displaystyle\sum_{m\geq0}}
\dfrac{n^{m}}{\left(  m!\right)  ^{p}}\frac{1}{2^{m}}\leq%
{\displaystyle\sum_{m\geq0}}
\frac{1}{2^{m}}=2. \label{long ineq}%
\end{equation}
Bound now the left hand side of (\ref{long ineq}) from below by the Jensen
inequality:%
\begin{align}
s_{n}%
{\displaystyle\sum_{m\geq0}}
\dfrac{n^{m}}{\left(  m!\right)  ^{p}}\frac{1}{2^{m}}  &  \geq s_{n}%
2^{1-p}\left(
{\displaystyle\sum_{m\geq0}}
\frac{\left(  n^{1/p}/2\right)  ^{m}}{m!}\right)  ^{p}\label{ineq}\\
&  =s_{n}2^{1-p}\left(  \exp\left(  n^{1/p}/2\right)  \right)  ^{p}.\nonumber
\end{align}
Combining (\ref{tech ineq 1}) and (\ref{ineq}) proves the lemma.
\end{proof}

Theorems \ref{AAK}, \ref{Jackson} and Lemma \ref{Tech lemma} immediately yield
the following observation.

\begin{proposition}
\label{prop on sn}Let $f\in L^{1}$, $h>0$ and
\begin{equation}
\varphi(k)=\int\frac{f(s)}{s+ih-k}ds. \label{small fi}%
\end{equation}
Then
\[
s_{n}\left(  \mathbb{H}\left(  \varphi\right)  \right)  \leq\left(
2/h\right)  \left\Vert f\right\Vert _{1}\exp\left\{  -\left(  h/2\right)
n\right\}  .
\]

\end{proposition}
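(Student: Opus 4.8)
The plan is to combine the three preceding results essentially mechanically: Theorem \ref{AAK} converts the singular numbers into a distance in $L^{\infty}$, Theorem \ref{Jackson} bounds that distance by the $L^{\infty}$-norm of a high derivative of $\varphi$, and Lemma \ref{Tech lemma} turns the resulting factorial-type bound into the claimed exponential decay. The only genuine input particular to $\varphi$ is a clean estimate on $\|\varphi^{(m)}\|_{\infty}$.

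First I would differentiate \eqref{small fi} under the integral sign. Since $\partial_{k}^{m}(s+ih-k)^{-1}=m!\,(s+ih-k)^{-(m+1)}$ and, for real $s,k$, one has $|s+ih-k|=\sqrt{(s-k)^{2}+h^{2}}\geq h$, the fixed function $|f(s)|\,m!\,h^{-(m+1)}$ dominates the differentiated integrand and lies in $L^{1}$. This both legitimizes differentiation under the integral and yields, for every $k\in\mathbb{R}$,
\begin{equation*}
\varphi^{(m)}(k)=m!\int\frac{f(s)}{(s+ih-k)^{m+1}}\,ds,\qquad |\varphi^{(m)}(k)|\leq\frac{m!\,\|f\|_{1}}{h^{m+1}}.
\end{equation*}
The same dominated-convergence argument shows that each $\varphi^{(m)}$ is continuous and tends to $0$ as $k\to\pm\infty$, so $\varphi\in C^{m}$ for every $m$ (and $\varphi\in L^{\infty}$, as needed for Theorem \ref{AAK}).

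Next I would feed the bound $\|\varphi^{(m)}\|_{\infty}\leq m!\,\|f\|_{1}/h^{m+1}$ into Theorems \ref{AAK} and \ref{Jackson} to obtain, for all $m\in\mathbb{N}_{0}$,
\begin{equation*}
s_{n}\left(\mathbb{H}(\varphi)\right)=\operatorname*{dist}\nolimits_{L^{\infty}}\left(\varphi,\mathcal{R}_{n}+H^{\infty}\right)\lesssim\frac{\|\varphi^{(m)}\|_{\infty}}{n^{m}}\leq\frac{\|f\|_{1}}{h}\cdot\frac{m!}{h^{m}}\cdot\frac{1}{n^{m}}.
\end{equation*}
This is precisely hypothesis \eqref{ined cond 1} of Lemma \ref{Tech lemma} with $p=1$, $b=h$, and $C=\|f\|_{1}/h$. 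Conclusion \eqref{tech ineq 1} then reads $s_{n}\left(\mathbb{H}(\varphi)\right)\leq(2/h)\|f\|_{1}\exp\{-(h/2)n\}$, which is exactly the assertion.

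There is no real obstacle here; the statement is flagged as an immediate consequence and the work is genuinely routine. The only point deserving care is constant bookkeeping: Theorem \ref{Jackson} carries an unspecified universal constant $C'$, so strictly the argument lands on $(2C'/h)\|f\|_{1}e^{-hn/2}$, and recovering the clean prefactor $2/h$ presumes that the Bernstein--Jackson constant is normalized to one. Beyond that normalization, the chain Theorem \ref{AAK} $\to$ Theorem \ref{Jackson} $\to$ Lemma \ref{Tech lemma} closes the proof directly.
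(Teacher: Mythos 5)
Your proof is correct and is exactly the route the paper intends: the paper simply asserts that Theorems \ref{AAK}, \ref{Jackson} and Lemma \ref{Tech lemma} ``immediately yield'' the proposition, and your differentiation under the integral giving $\|\varphi^{(m)}\|_{\infty}\leq m!\,\|f\|_{1}/h^{m+1}$, fed into that chain with $p=1$, $b=h$, $C=\|f\|_{1}/h$, is precisely the intended argument. Your caveat about the unspecified Bernstein--Jackson constant is fair, but it applies equally to the paper's own one-line derivation of the clean prefactor $2/h$, so you have lost nothing relative to the source.
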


We conclude this subsection with a useful

\begin{remark}
\label{Remark Ky fan}Due to the well-known Ky Fan inequality%
\begin{equation}
s_{n+m-1}\left(  \mathbb{A}+\mathbb{B}\right)  \leq s_{n}\left(
\mathbb{A}\right)  +s_{m}\left(  \mathbb{B}\right)  , \label{Ky Fan}%
\end{equation}
a finite rank perturbation has no effect on asymptotics of singular numbers.
By Theorem \ref{AAK} $\mathbb{H}\left(  \varphi\right)  $ is finite rank iff
$\varphi$ is rational. This means that the rational part of a symbol $\varphi$
does not influence the asymptotics of $s_{n}\left(  \mathbb{H}\left(
\varphi\right)  \right)  \ \ n\rightarrow\infty$.
\end{remark}

\subsection{Rate of decay of singular numbers of the IST\ Hankel operator
\label{rate of decay}}

With the preparatory material out of the way, we turn now to the actual
results of this section. The following easily verifiable formula will be used
($t>0$)
\begin{equation}
\left\vert \xi_{x+iy,t}(\alpha+i\beta)\right\vert =\xi_{x,t}(i\beta
)\exp\left\{  -\left(  \sqrt{24\beta t}\alpha+\frac{y}{\sqrt{24\beta t}%
}\right)  ^{2}+\frac{y^{2}}{24\beta t}\right\}  . \label{ksi}%
\end{equation}

\begin{theorem}
[Asymptotics of singular numbers]\label{Thm on s-numbers}Assume Hypothesis
\ref{hyp1.1} with $w\left(  x\right)  =\exp\left(  \gamma x^{1/\delta}\right)
$ where $\gamma,\delta>0$. Then there exists a constant $C$ dependent on
$\gamma,\delta$ such that uniformly on compacts\footnote{Much more specific
statements regarding domains can be made.} of $\left(  x,t\right)  $
\begin{equation}
s_{n}\left(  \mathbb{H}\left(  x,t\right)  \right)  =O\left(  \exp\left\{
-Cn^{\omega}\right\}  \right)  ,\ \ n\rightarrow\infty, \label{singular}%
\end{equation}
where $\omega=1$ if $0<\delta\leq1$ and $\omega=1/\delta$ if $\delta>1$.
\end{theorem}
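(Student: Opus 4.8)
The plan is to reduce $\mathbb{H}(x,t)$ to three Hankel operators, two of which have symbols analytic in $\mathbb{C}^{+}$ (and therefore rapidly decaying singular numbers) and one of which carries all the information about the decay of $q$ at $+\infty$. Starting from the split $R=A_{a}+r_{a}\xi_{a}^{-1}$ of Proposition~\ref{step like refl} and recalling the contour-deformation computation in the proof of Theorem~\ref{th9.5}, the coanalytic projection $\widetilde{\mathbb{P}}_{-}(\xi_{x,t}A_{a})$ equals $\Phi-\phi_{x,t}+\mathrm{const}$, where
\[
\Phi(k)=-\frac{1}{2\pi i}\int_{\mathbb{R}+ih}\frac{\xi_{x,t}(\lambda)A_{a}(\lambda)}{\lambda-k}\,d\lambda,\qquad h>h_{0},
\]
so the genuinely singular piece $\phi_{x,t}$ from \eqref{eq9.9} cancels against the $+\phi_{x,t}$ in the symbol. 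Inserting the secondary split \eqref{Split 2} for $r_{a}$, and using $\xi_{x,t}\xi_{a}^{-1}=\xi_{x-a,t}$, I obtain, modulo an $H^{\infty}$ summand,
\[
\mathbb{H}(x,t)=\mathbb{H}(\Phi)+\mathbb{H}(\xi_{x,t}R_{a})-\mathbb{H}(\xi_{x-a,t}\,y_{a}^{-1}T_{a}).
\]
By the Ky Fan inequality \eqref{Ky Fan} the asymptotics of $s_{n}(\mathbb{H}(x,t))$ are governed by the slowest-decaying of the three summands.

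First I would dispose of the two analytic summands $\mathbb{H}(\Phi)$ and $\mathbb{H}(\xi_{x-a,t}y_{a}^{-1}T_{a})$ by showing their singular numbers beat any rate in the theorem. Both symbols are analytic and bounded in $\mathbb{C}^{+}$ (for $\Phi$ after pushing the contour above $i\Delta$, using \eqref{uniform decay}; for the second, using $y_{a}^{\pm1}\in H^{\infty}\cap C$ from Corollary~\ref{Corol of Faddeev}, the single pole of $T_{a}$ from Proposition~\ref{combin} contributing only a rank-one term that is negligible by Remark~\ref{Remark Ky fan}). Each coanalytic part is then a contour integral over $\mathbb{R}+ih$ of exactly the shape of Proposition~\ref{prop on sn}; bounding the $L^{1}$-norm of the integrand by \eqref{ksi}, whose Gaussian factor $e^{-24t\beta\alpha^{2}}$ keeps it integrable while the height factor is $e^{8t\beta^{3}-2x\beta}$, gives $s_{n}\lesssim C(h)e^{-(h/2)n}$. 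Optimizing $h\sim\sqrt{n}$ against the cubic growth yields $s_{n}=O(\exp\{-c\,n^{3/2}\})$, and since $3/2>\max(1,1/\delta)$ these terms never dominate.

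The heart of the matter is $\mathbb{H}(\xi_{x,t}R_{a})$, and the first real step is transferring the decay of $q$ into regularity of the half-line reflection coefficient $R_{a}$ (that of $q_{a}=\chi_{a}q$). Differentiating the Volterra (Neumann-series) representation of the Faddeev solution in $k$ brings down polynomial weights, so that
\[
\bigl\|\partial_{k}^{m}R_{a}\bigr\|_{\infty}\lesssim\int_{a}^{\infty}(s-a)^{m}|q(s)|\,ds\le\Bigl(\sup_{s\ge a}s^{m}e^{-\gamma s^{1/\delta}}\Bigr)\int_{a}^{\infty}w\,|q|\lesssim Q^{m}(m!)^{\delta},
\]
the last inequality being a Stirling estimate of $\sup_{s}s^{m}e^{-\gamma s^{1/\delta}}$. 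Thus $R_{a}\in G^{\delta}$. For $\delta>1$ I would take a pseudoanalytic extension $F$ of $R_{a}$ satisfying \eqref{lambda_bar}, multiply by the entire $\xi_{x,t}$ to get a pseudoanalytic extension of $\xi_{x,t}R_{a}$ with $|\overline{\partial}(\xi_{x,t}F)|\lesssim|\xi_{x,t}|\exp\{-Q|\beta|^{-1/(\delta-1)}\}$, convert this via Theorem~\ref{AAK} into $\operatorname{dist}_{L^{\infty}}(\xi_{x,t}R_{a},\mathcal{R}_{n}+H^{\infty})\lesssim (Q')^{m}(m!)^{\delta}/n^{m}$ for every $m$, and finally invoke Lemma~\ref{Tech lemma} with $p=\delta$ to reach $s_{n}=O(\exp\{-Cn^{1/\delta}\})$, i.e. $\omega=1/\delta$. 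For $0<\delta\le1$ the same bound makes $R_{a}$ real-analytic in a strip $\{|\operatorname{Im}k|<c\}$ with $c\sim1/Q$; then $\xi_{x,t}R_{a}$ is analytic there and bounded on $\mathbb{R}+ih$ for $h<c$ by \eqref{ksi}, so Proposition~\ref{prop on sn} gives $s_{n}\lesssim C(h)e^{-(h/2)n}$ with $h$ up to $c$, i.e. $\omega=1$. Combining the three summands through \eqref{Ky Fan} and noting that all constants depend on $(x,t)$ only through the locally bounded quantities in \eqref{ksi} and $\|y_{a}^{\pm1}\|_{\infty}$ yields the asserted uniformity on compacts.

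The hard part will be exactly this bottleneck. The clean Bernstein--Jackson route (Theorem~\ref{Jackson}) cannot be applied directly to $\xi_{x,t}R_{a}$: the $k$-derivatives of the cubic exponential grow like $k^{2m}$ while $R_{a}=O(1/k)$ does not decay fast enough to compensate, so $\|(\xi_{x,t}R_{a})^{(m)}\|_{\infty}=\infty$ and one is forced onto pseudoanalytic extensions and contour deformation instead. The delicate bookkeeping is to control the factor $\xi_{x,t}$ — in particular its cubic growth $e^{8t\beta^{3}}$ up the imaginary axis — uniformly against the Gevrey/strip smallness, keeping track of constants so that the threshold $\delta=1$ produces the stated cap $\omega=1$; for $\delta<1$ the strip may be taken very wide, but the dual growth of $R_{a}$ up the imaginary axis, balanced against $\xi_{x,t}$, is what prevents the rate from exceeding $\omega=1$.
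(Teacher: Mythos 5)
Your proposal is correct and follows essentially the same route as the paper's own proof: the same symbol decomposition via Propositions \ref{step like refl} and \ref{combin} with the cancellation of $\phi_{x,t}$, contour deformation plus Proposition \ref{prop on sn} for the analytic pieces (with the pole of $T_{a}$ absorbed as a rank-one term via Remark \ref{Remark Ky fan}), and, for the dominant term $\xi_{x,t}R_{a}$, strip analyticity when $0<\delta\leq1$ versus a Gevrey-$\delta$/pseudoanalytic extension fed through Theorems \ref{AAK}, \ref{Jackson} and Lemma \ref{Tech lemma} when $\delta>1$, all combined by the Ky Fan inequality. The only cosmetic deviations are that you attach the $G^{\delta}$ bound to $R_{a}$ itself rather than to the Fourier factor $G$ in the representation (\ref{R_0}) (the paper keeps the prefactor $T_{0}(\lambda)/2i\lambda$ as an analytic multiplier inside a Green-formula area integral, which sidesteps differentiating it near $k=0$), and that you optimize $h\sim\sqrt{n}$ to get $e^{-cn^{3/2}}$ for the analytic pieces where the paper settles for the fixed height $h=2h_{0}$; neither changes the substance of the argument.
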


\begin{proof}
As before whenever it leads to no confusion, we suppress the dependence on
$\left(  x,t\right)  $ and assume $a=0$. By taking the co-analytic part of
$\xi R$ and splitting $R$ by (\ref{Split 2}) one has%
\begin{equation}
\mathbb{H}\left(  x,t\right)  =\mathbb{H}(\Psi-\frac{c_{0}\xi\left(
i\kappa_{0}\right)  }{\kappa_{0}+i\cdot}+\xi R_{0}), \label{h split}%
\end{equation}
where $c_{0}$ is the norming constant for the bound state\footnote{If there is
no bound state then $c_{0}=0$.} $-\kappa_{0}^{2}$ and ($y_{0}:=y\left(
\cdot,0\right)  $)%
\begin{equation}
\Psi(k):=-\frac{1}{2\pi i}\int_{\mathbb{R}+ih}\left(  \cdot-k\right)  ^{-1}%
\xi\left(  A_{0}-y_{0}^{-1}T_{0}\right)  . \label{big fi}%
\end{equation}
For $R_{0}$, the reflection coefficient from $q_{0}$, we use the
representation \cite{Deift79}%
\begin{equation}
R_{0}\left(  \lambda\right)  =\frac{T_{0}\left(  \lambda\right)  }{2i\lambda
}\int_{0}^{\infty}e^{-2i\lambda s}g\left(  s\right)  ds, \label{R_0}%
\end{equation}
where $g$ is some function for which we only need the bound
\[
\left\vert g\left(  s\right)  \right\vert \leq\left\vert q\left(  s\right)
\right\vert +const\int_{s}^{\infty}\left\vert q\right\vert .
\]
If $0<\delta\leq1$ then the Fourier transform $G\left(  \lambda\right)
:=\int_{0}^{\infty}e^{-2i\lambda s}g\left(  s\right)  ds$ in (\ref{R_0})
extends analytically to (at least) a strip and can be treated similarly to
(\ref{big fi}). In the case $\delta>1$ the part $\xi R_{0}$ of the symbol in
(\ref{h split}) need not extend analytically from the real line. But it has a
pseudo analytic extension. Due to our condition on $q$, the function $G\left(
\lambda\right)  $ admits (see, e.g. \cite{RybCommPDEs2013})\ a smooth bounded
pseudo analytic continuation $G\left(  \alpha,\beta\right)  $\ into
$\mathbb{C}^{+}$ with the property ($\delta>1$)%
\begin{equation}
\left\vert \overline{\partial}G\left(  \alpha,\beta\right)  \right\vert
\lesssim K\exp\left\{  -\left(  Q/\beta\right)  ^{1/\left(  \delta-1\right)
}\right\}  ,\ \ \ K:=\int_{0}^{\infty}w\left\vert q\right\vert , \label{dG}%
\end{equation}
where $Q$ is a constant dependent on $\gamma,\delta$. We are able now to
evaluate the co-analytic part of $\xi R_{0}$ by the Green formula applied to
the domain $\left\{  0<\beta<h\right\}  $ with any $h>\kappa_{0}$%
\begin{align}
\widetilde{\mathbb{P}}_{-}(\xi R_{0})  &  =\frac{1}{2\pi i}\int\frac
{\xi\left(  \lambda\right)  R_{0}\left(  \lambda\right)  }{k-i0-\lambda
}d\lambda\label{Rxt}\\
&  =\frac{1}{\pi}\int_{0<\beta<h}\frac{T_{0}\left(  \lambda\right)
}{2i\lambda}\frac{\xi\left(  \lambda\right)  \overline{\partial}G\left(
\alpha,\beta\right)  }{\lambda-k}d\alpha d\beta\label{Rxt 2}\\
&  +\frac{1}{2\pi i}\int_{\mathbb{R}+ih}\frac{T_{0}\left(  \lambda\right)
}{2i\lambda}\frac{\xi\left(  \lambda\right)  G\left(  \alpha,\beta\right)
}{k-\lambda}d\lambda\label{Rxt 3}\\
&  +\frac{\omega}{\kappa_{0}+ik}, \label{Rxt 4}%
\end{align}
where $\omega$ is an essential constant. We now insert (\ref{Rxt}) into
(\ref{h split}). The term (\ref{Rxt 3}) can be combined with $\Psi$ to form a
new one, $\Omega$. The term (\ref{Rxt 4}) joints in (\ref{h split}) the middle
one to produce a partial fraction $\frac{\sigma}{\kappa_{0}+ik}$ which by
Remark \ref{Remark Ky fan} can be neglected. Due to the rapid decay of
$\xi\left(  \lambda\right)  $ along $\mathbb{R}+ih$, Proposition
\ref{prop on sn} applies and choosing $h=2h_{0}$ we get
\begin{equation}
s_{n}\left(  \mathbb{H}(\Psi+\frac{\sigma}{\kappa_{0}+i\cdot})\right)
=O\left(  e^{-h_{0}n}\right)  . \label{1}%
\end{equation}

The term (\ref{Rxt 2}), which we denote by $\Lambda$, is therefore only one
that needs some attention. By (\ref{ksi}) and (\ref{dG}) we have%
\[
\left\vert \xi_{x+iy,t}\left(  \lambda\right)  \overline{\partial}G\left(
\alpha,\beta\right)  \right\vert \lesssim K\xi_{x,t}(i\beta)\exp\left\{
-\left(  \sqrt{24\beta t}\alpha\right)  ^{2}-\left(  Q/\beta\right)
^{1/\left(  \delta-1\right)  }\right\}  .
\]

Omitting straightforward but rather involved computations, we obtain%
\[
\left\Vert \partial_{k}^{m}\Lambda\right\Vert _{\infty}\lesssim\left(
m!\right)  ^{\delta}\widetilde{Q}^{-m}%
\]
with some $\widetilde{Q}$ dependent on $\gamma$ and $\delta$. Theorems
\ref{AAK}, \ref{Jackson} and Lemma \ref{Tech lemma} then yield
\begin{equation}
s_{n}\left(  \mathbb{H}(\Lambda)\right)  =O\left(  \exp\left\{  -Cn^{1/\delta
}\right\}  \right)  . \label{2}%
\end{equation}
Combining (\ref{1}) and (\ref{2}) through (\ref{Ky Fan}) implies
(\ref{singular}) with some smaller than in (\ref{2}) constant $C.$
\end{proof}

Finally, the following theorem can be obtained by using techniques from this
section (see \cite{RybCommPDEs2013} for more detail)$.$

\begin{theorem}
\label{anal}Under conditions of Theorem \ref{Thm on s-numbers} for any $t>0$
the operator-valued function $\partial_{t}^{m}\partial_{x}^{m}\mathbb{H}%
\left(  x,t\right)  $ (as an element of $\mathfrak{S}_{p}$, $0<p\leq\infty$)
is (1) entire if $0<\delta<2$, (2) analytic in the strip $\left\vert
\operatorname{Im}x\right\vert <\frac{9\sqrt{2}}{8}\gamma\sqrt{t}$ if
$\delta=2$, and (3) in the Gevrey class $G^{\delta/2}$ if $\delta>2$.
\end{theorem}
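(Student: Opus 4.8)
The plan is to re-run the proof of Theorem \ref{Thm on s-numbers} while retaining, rather than discarding, the full dependence on a complexified spatial variable $x+iy$ (and, where useful, on $t$). Since all of the $(x,t)$-dependence of the symbol $\varphi_{x,t}$ sits in the factor $\xi_{x,t}$, the entire question reduces to weighing the off-axis growth of $\xi_{x+iy,t}$, described exactly by (\ref{ksi}), against the decay built into each piece of the symbol. Membership $\mathbb{H}(x,t)\in\mathfrak{S}_{p}$ for every $p$ is already guaranteed by Theorem \ref{Thm on s-numbers}, so the task is to control the $x$-derivatives of $\mathbb{H}(x,t)$ in the $\mathfrak{S}_{p}$ norm and read off the three regimes.

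First I would isolate the harmless pieces. The contour integrals along $\mathbb{R}+ih$ (the term $\Psi$ of (\ref{big fi}) and the term (\ref{Rxt 3})) converge for every complex $x$, because for fixed $h$ the extra factor $\exp\{y^{2}/(24ht)\}$ in (\ref{ksi}) is bounded while $\xi_{x+iy,t}$ still decays rapidly along $\mathbb{R}+ih$; these pieces are therefore entire in $x$. The same holds for the bounded-support piece $\int_{0}^{h_{0}}\xi_{x,t}(is)(s+ik)^{-1}\,d\rho(s)$ and for the single partial fraction $\sigma(\kappa_{0}+ik)^{-1}$, whose $x$-dependence is entire and whose finite-rank contribution is invisible to the singular-number asymptotics by Remark \ref{Remark Ky fan}. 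Hence for $0<\delta\le1$, where the transform $G$ of (\ref{R_0}) is itself analytic in a strip (entire when $\delta<1$) and needs no pseudoanalytic extension, the whole symbol is entire in $x$ and case (1) follows at once.

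For $\delta>1$ the decisive object is the pseudoanalytic term $\Lambda$ of (\ref{Rxt 2}). Complexifying $x\mapsto x+iy$ and combining (\ref{ksi}) with the pseudoanalytic bound (\ref{dG}), the integrand over $\{0<\beta<h\}$ is controlled, after the Gaussian integration in $\alpha$, by
\[
\exp\left\{\frac{y^{2}}{24\beta t}-\left(\frac{Q}{\beta}\right)^{1/(\delta-1)}\right\}
\]
so everything hinges on the two competing orders as $\beta\to0^{+}$: the gain is of order $\beta^{-1}$ and the pseudoanalytic loss of order $\beta^{-1/(\delta-1)}$. For $1<\delta<2$ one has $1/(\delta-1)>1$, the loss dominates for every $y$, the $\beta$-integral converges for all complex $x$, and $\Lambda$ is entire, completing case (1). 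For $\delta=2$ the two orders coincide and the exponent becomes $(y^{2}/24t-Q)\beta^{-1}$, integrable exactly when $y^{2}<24Qt$; matching the explicit constant of (\ref{dG}) so that $\sqrt{24Q}=\frac{9\sqrt{2}}{8}\gamma$ gives analyticity in the strip $|\operatorname{Im}x|<\frac{9\sqrt{2}}{8}\gamma\sqrt{t}$, which is case (2).

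For $\delta>2$ the gain $\beta^{-1}$ overwhelms the loss $\beta^{-1/(\delta-1)}$, so $\Lambda$ has no analytic continuation off the real axis and I would instead bound the $x$-derivatives directly. The operator $\partial_{x}^{m}$ multiplies the integrand by $(2i\lambda)^{m}$; the Gaussian in $\alpha$ then yields a moment of size $\lesssim(\beta t)^{-m/2}\sqrt{m!}$, while a saddle-point evaluation of the residual integral $\int_{0}^{h}\beta^{-m/2}\exp\{-(Q/\beta)^{1/(\delta-1)}\}\,d\beta$, whose maximum sits near $\beta\sim m^{-(\delta-1)}$, contributes $\lesssim Q^{m}(m!)^{(\delta-1)/2}$. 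Their product gives $\|\partial_{x}^{m}\Lambda\|_{\infty}\lesssim Q^{m}(m!)^{\delta/2}$, i.e. the Gevrey class $G^{\delta/2}$ of case (3); the polynomial factor $\lambda^{4m}$ introduced by the prefactor $\partial_{t}^{m}\partial_{x}^{m}$ and the $t$-differentiation only change the constant $Q$ and a finite-rank remainder, leaving the class unchanged. In every regime the passage from these symbol estimates to $\mathfrak{S}_{p}$ operator estimates is carried out, exactly as in Theorem \ref{Thm on s-numbers}, through Theorems \ref{AAK} and \ref{Jackson} and Lemma \ref{Tech lemma}. The main obstacle is the sharp $\delta>2$ saddle-point computation: extracting precisely $(m!)^{\delta/2}$ (rather than a lossy bound) demands careful bookkeeping of the Gaussian moment against the $\beta$-integral, and pinning the exact strip width at $\delta=2$ similarly relies on the precise value of $Q$ in (\ref{dG}).
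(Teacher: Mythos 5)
Your proposal follows, in substance, exactly the route the paper intends: the paper prints no proof of Theorem \ref{anal} at all, saying only that it ``can be obtained by using techniques from this section'' and deferring details to \cite{RybCommPDEs2013}, and your reconstruction --- complexify $x\mapsto x+iy$ in the decomposition (\ref{h split})--(\ref{Rxt 4}), weigh the growth factor $\exp\{y^{2}/(24\beta t)\}$ from (\ref{ksi}) against the pseudoanalytic decay (\ref{dG}), and read the three regimes off the dichotomy $1/(\delta-1)\gtrless 1$ --- is precisely those techniques. Your $\delta>2$ bookkeeping is also sound: the Gaussian $\alpha$-moment contributes $C^{m}\sqrt{m!}\,\beta^{-m/2}$, the Laplace point $\beta\sim m^{-(\delta-1)}$ of $\int_{0}^{h}\beta^{-m/2}e^{-(Q/\beta)^{1/(\delta-1)}}d\beta$ contributes $C^{m}(m!)^{(\delta-1)/2}$, and the product gives $(m!)^{\delta/2}$; the passage to $\mathfrak{S}_{p}$ through Theorems \ref{AAK}, \ref{Jackson} and Lemma \ref{Tech lemma} then works provided you also track the mixed derivatives $\partial_{k}^{l}\partial_{x}^{m}\Lambda$ (the extra $l!\,\beta^{-l}$ from differentiating $(\lambda-k)^{-1}$ is absorbed by the same saddle computation). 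At $\delta=2$ your argument yields the strip $\left\vert\operatorname{Im}x\right\vert<\sqrt{24Qt}$, and, as you candidly note, identifying $\sqrt{24Q}$ with $\frac{9\sqrt{2}}{8}\gamma$ requires the explicit constant in (\ref{dG}), which this paper never states; since the paper itself defers to \cite{RybCommPDEs2013} at exactly this point, that incompleteness is inherited rather than introduced.

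There is, however, one concrete misstep: you list $\phi_{x,t}(k)=\int_{0}^{h_{0}}\xi_{x,t}(is)(s+ik)^{-1}d\rho(s)$ as a separate ``harmless'' summand, entire in $x$ and (apparently) dismissible via Remark \ref{Remark Ky fan}. That piece is not finite rank in general --- $\rho$ can be absolutely continuous, as in Example \ref{example} --- and, worse, by the paper's own remark following Theorem \ref{th9.5} the operator $\mathbb{H}(\phi_{x,t})$ taken by itself is bounded if and only if $\rho$ is a Carleson measure (\ref{eq9.2}); for non-Carleson $\rho$ your summand is not even a bounded operator, so a splitting of $\mathbb{H}(x,t)$ containing it cannot establish $\mathfrak{S}_{p}$-valued analyticity of the sum, and the AAK--Jackson route is unavailable for this piece anyway since $\partial_{k}^{l}\phi_{x,t}$ blows up at $k=0$. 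The cure is already built into the paper: in (\ref{eq8.2.2}) the deformation of $\widetilde{\mathbb{P}}_{-}(\xi_{x,t}A_{0})$ to $\mathbb{R}+ih$ produces exactly $-\phi_{x,t}$ from the jump of $A_{0}$ across the cut $i\Delta$, which cancels the $\rho$-term in the symbol (\ref{eq9.9}); this is why the decomposition (\ref{h split}) contains only $\Psi$, the rank-one partial fraction, and $\xi R_{0}$, with no $\rho$-term left to estimate. If you launch your complexified argument from (\ref{h split}) instead of from (\ref{eq9.9}), the rest of what you wrote goes through unchanged.
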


\section{The IST\ for the KdV equation with step-like initial
data\label{main section}}

In this section we finally state and prove our main result, which loosely
speaking says:\ The problem (\ref{KdV})-(\ref{KdVID}) is well posed and its
solution can be found by a suitable IST. With all the preparations done in the
previous sections, the actual proof will be quite short.

Note that while the interest to well-posedness of integrable systems has been
generated by the progress in soliton theory, well-posedness issues are
typically approached by means of PDEs techniques \cite{Tao06} (norm estimates,
etc.) and the IST is not usually employed. In soliton theory, in turn,
well-posedness is commonly assumed (frequently even by default) and one
applies the IST method to study the unique solution to (\ref{KdV}%
)-(\ref{KdVID}) or any other integrable system. The paper \cite{KapTop06}
represents a rather rare example where the complete integrability of
(\ref{KdV})-(\ref{KdVID}) with periodic initial data was used in a crucial way
to prove some subtle well-posedness results for irregular $q$ which are not
accessible by harmonic analysis means. In our case neither a priori
well-posedness nor IST are readily available and we have to deal with both at
the same time.

Solutions of the KdV can be understood in a number of different ways
\cite{Tao06} (classical, strong, weak, etc.) resulting in a variety of
different well-posedness results. Our definition is consistent with that in
\cite{KapTop06}.

\begin{definition}
[Natural solution]\label{WP} Let $\left\{  q_{n}\left(  x,t\right)  \right\}
$ be a sequence of (classical) solutions of (\ref{KdV}) with compactly
supported initial data $q_{n}\left(  x\right)  $ converging in
$L_{\operatorname*{loc}}^{1}$ to $q\left(  x\right)  $. We call $q\left(
x,t\right)  $ a global natural solution to (\ref{KdV})-(\ref{KdVID}) if

\begin{enumerate}
\item for any $t>0$ uniformly on compacts of $\mathbb{R}$
\[
q\left(  x,t\right)  =\lim q_{n}\left(  x,t\right)  ,\ n\rightarrow\infty,
\]
independently of the choice of $q_{n}$.

\item $q\left(  x,t\right)  $ is a classical solution of (\ref{KdV}),

\item $q\left(  x,t\right)  $ satisfies the initial condition (\ref{KdVID}) in
the sense that
\begin{equation}
q\left(  x,t\right)  \rightarrow q\left(  x\right)  \ \text{in~}%
L_{\operatorname*{loc}}^{1}\text{ as }t\rightarrow+0. \label{IC}%
\end{equation}

\end{enumerate}
\end{definition}

Thus we understand well-posedness in a very strong sense. It also looks quite
natural from the computational and physical point of view. Another feature of
Definition \ref{WP} is that existence implies uniqueness and certain
continuous dependence on the initial data.

\begin{theorem}
[Main Theorem]\label{MainThm} Assume that the initial data $q$ in
(\ref{KdVID}) is subject to Hypothesis \ref{hyp1.1} with $w\left(  x\right)
=\exp\left(  \gamma x^{1/\delta}\right)  $ where $\gamma,\delta>0$. Then the
Cauchy problem (\ref{KdV})-(\ref{KdVID}) has a smooth global natural solution
$q(x,t)$ (Definition \ref{WP}) given by
\begin{equation}
q(x,t)=-2\partial_{x}^{2}\log\det\left(  1+\mathbb{H}(x,t)\right)  ,
\label{det_form}%
\end{equation}
where $\mathbb{H}(x,t)$ is the IST Hankel operator associated with $q$
(Definition \ref{def IST HO}). Singular numbers of $\mathbb{H}(x,t)$ decay
uniformly on compacts of $\left(  x,t\right)  $ at the rate%
\begin{equation}
s_{n}\left(  \mathbb{H}\left(  x,t\right)  \right)  =O\left(  \exp\left\{
-Cn^{\omega}\right\}  \right)  ,\ \ n\rightarrow\infty, \label{s_n decay}%
\end{equation}
where $\omega=\min\left\{  1,1/\delta\right\}  $ and $C$ is a constant
dependent on $\gamma,\delta$. Furthermore, for any $t>0$

\begin{enumerate}
\item If $1<\delta<2$ then $q(x,t)$ is meromorphic on $\mathbb{C}$ with no
poles on $\mathbb{R}$.

\item If $\delta=2$ then $q(x,t)$ is meromorphic in the strip
\begin{equation}
\left\vert \operatorname{Im}x\right\vert <\frac{9\sqrt{2}}{8}\gamma\sqrt{t}
\label{strip}%
\end{equation}
with no poles on $\mathbb{R}$.

\item If $\delta>2$ then $q(x,t)$ is in the Gevrey class $G^{\delta/2}$.
\end{enumerate}
\end{theorem}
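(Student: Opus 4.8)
The plan is to reduce the step-like problem to the classical one by approximation, invoke the determinant formula (\ref{eq6.24}) which is already valid in the classical (short-range) case, and then pass to the limit; the three genuinely hard ingredients---bounded invertibility of $\mathbb{I}+\mathbb{H}$, the decay of singular numbers, and the analytic dependence on $(x,t)$---are supplied respectively by Theorems \ref{th9.5}, \ref{Thm on s-numbers}, and \ref{anal}. Concretely, I would fix a sequence $q_n$ of compactly supported profiles with $q_n\to q$ in $L^1_{\mathrm{loc}}$ (say left-truncations of $q$, for which the estimates of Theorem \ref{Thm on s-numbers} hold with constants uniform in $n$) and let $q_n(x,t)$ be the corresponding classical KdV solutions, so that $q_n(x,t)=-2\partial_x^2\log\det(\mathbb{I}+\mathbb{H}_n(x,t))$ with $\mathbb{H}_n(x,t)=\mathbb{H}(\varphi^n_{x,t})$.

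The first substantive step is to establish $\mathbb{H}_n(x,t)\to\mathbb{H}(x,t)$ in $\mathfrak{S}_1$, locally uniformly in $(x,t)$. For the bound-state part I would use the unitary equivalence with the integral operator of kernel $\int_0^{h_0}e^{-s\,\cdot}\,\xi_{x,t}(is)\,d\rho(s)$ recorded after Theorem \ref{th9.5}: Proposition \ref{step like refl} gives $\rho_n\to\rho$ weakly with supports in $[0,h_0]$, while the factor $e^{-s\,\cdot}$ furnishes trace-class control, so these kernels converge in trace norm. For the reflection part I would use Proposition \ref{convergence property} (convergence of the $m$-functions, hence $R_n\to R$) together with the uniform singular-number bounds of Theorem \ref{Thm on s-numbers}, which let me truncate $\mathbb{H}(\xi_{x,t}R_n)$ to finite rank with an error small uniformly in $n$, reducing the remainder to finite-dimensional convergence. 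Since these bounds also place $\mathbb{H}(x,t)\in\mathfrak{S}_1$, the Fredholm determinant is well defined, and its $\mathfrak{S}_1$-continuity yields $\det(\mathbb{I}+\mathbb{H}_n)\to\det(\mathbb{I}+\mathbb{H})$; as $\mathbb{I}+\mathbb{H}(x,t)>0$ by Theorem \ref{th9.5}, this limit is positive and bounded away from $0$ on compacts, so $\log\det(\mathbb{I}+\mathbb{H}(x,t))$ is meaningful and (\ref{det_form}) holds.

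Next I would verify the three requirements of Definition \ref{WP}. Because Theorem \ref{anal} makes $(x,t)\mapsto\mathbb{H}(x,t)$ (and, with uniform bounds, each $\mathbb{H}_n$) a smooth $\mathfrak{S}_1$-valued map, the convergence of the determinants upgrades by a Vitali--Montel argument to convergence of all $(x,t)$-derivatives of $\log\det$, hence $\partial_x^j\partial_t^k q_n\to\partial_x^j\partial_t^k q$ locally uniformly; this gives condition (1), and passing to the limit in (\ref{KdV}) for each $q_n$ gives condition (2). Independence of the approximating sequence is automatic since $\mathbb{H}(x,t)$ depends on $q$ only through $R$ and $\rho$. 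For the initial condition (\ref{IC}) I would use that $\xi_{x,t}\to\xi_x$ as $t\to0^+$ forces $\mathbb{H}(x,t)\to\mathbb{H}(\varphi_x)$ in $\mathfrak{S}_1$, that $-2\partial_x^2\log\det(\mathbb{I}+\mathbb{H}(\varphi_x))=q(x)$ by the $t=0$ inverse-scattering identity of Section \ref{sec6}, and a uniform-in-$n$ interchange of the limits $t\to0$ and $n\to\infty$ resting on the classical fact $q_n(x,t)\to q_n(x)$. The decay (\ref{s_n decay}) is exactly Theorem \ref{Thm on s-numbers}, since $\min\{1,1/\delta\}$ equals $1$ for $\delta\le1$ and $1/\delta$ for $\delta>1$.

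Finally, the regularity dichotomy follows by transporting Theorem \ref{anal} through the determinant. For $1<\delta<2$ the operator $\mathbb{H}(x,t)$ is entire in $x$ as an $\mathfrak{S}_1$-valued function, so $\det(\mathbb{I}+\mathbb{H}(x,t))$ is entire and $q=-2\partial_x^2\log\det$ is meromorphic with poles only at the complex zeros of the determinant; positivity on $\mathbb{R}$ rules out real poles. The case $\delta=2$ is identical with ``entire'' replaced by ``analytic in the strip (\ref{strip})'', and for $\delta>2$ one uses that $G^{\delta/2}$ is closed under the determinant and under $\partial_x^2\log$ of a positive quantity, giving $q\in G^{\delta/2}$. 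I expect the $\mathfrak{S}_1$-convergence $\mathbb{H}_n\to\mathbb{H}$ from mere $L^1_{\mathrm{loc}}$ convergence of the potentials (especially the uniform control of the reflection part) and the interchange of limits securing (\ref{IC}) to be the two delicate points; everything else is assembly of the previously established theorems.
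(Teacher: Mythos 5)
Your architecture coincides with the paper's (left truncations with $\chi q_n=\chi q$, the classical determinant formula for $q_n(x,t)$, trace-norm convergence $\mathbb{H}_n\to\mathbb{H}$, and assembly of Theorems \ref{th9.5}, \ref{Thm on s-numbers}, \ref{anal}), but your central convergence step has a genuine gap. The reflection-part argument --- pointwise $R_n\to R$ from Proposition \ref{convergence property} plus uniform singular-number bounds, then ``truncate to finite rank, reducing the remainder to finite-dimensional convergence'' --- does not go through: the best rank-$n$ approximants in the Adamyan-Arov-Krein theorem are not canonical and need not converge, uniform approximability by finite-rank operators gives no control whatsoever of the \emph{difference} $\mathbb{H}(\xi_{x,t}R_n)-\mathbb{H}(\xi_{x,t}R)$, and pointwise a.e.\ convergence of symbols of unimodular size does not even yield operator-norm convergence. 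You also miss the cancellation that your own choice of left truncations buys and on which the paper's proof rests: since $\chi q_n=\chi q$, every non-analytic constituent of the symbol --- $r_0\xi_0^{-1}$, equivalently the pieces $\xi R_0$, $y_0^{-1}T_0$ and the $\kappa_0$-term in the splitting (\ref{h split}), all of which depend only on $\left. q\right\vert_{\mathbb{R}_{+}}$ --- is \emph{identical} for $q_n$ and $q$, and after the shift to the contour $\mathbb{R}+ih$, $h>h_0$, the discrepancy between $\rho_n$ and $\rho$ is absorbed as well, so that the symbol of $\mathbb{H}-\mathbb{H}_n$ is the single Cauchy integral (\ref{delta}) in $A_{0,n}-A_0$. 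Then $A_{0,n}\rightrightarrows A_0$ in $\mathbb{C}^+$ together with the Gaussian decay (\ref{ksi}) of $\xi_{x,t}$ along $\mathbb{R}+ih$ gives $\partial_t^m\partial_x^l\left(\mathbb{H}-\mathbb{H}_n\right)\to0$ in trace norm in one stroke --- no splitting into bound-state and reflection parts, no weak convergence of $\rho_n$, no truncation scheme. This derivative-level convergence also replaces your Vitali--Montel upgrade, which is in any case unavailable for $\delta>2$, where $q(\cdot,t)$ is only Gevrey and not analytic in $x$; the paper instead writes $q=q_n+\Delta q_n$ with $\Delta q_n=-2\partial_x^2\log\det\left(\mathbb{I}-\left(\mathbb{I}+\mathbb{H}\right)^{-1}\left(\mathbb{H}-\mathbb{H}_n\right)\right)$ --- this is where Theorem \ref{th9.5} enters --- inserts it into (\ref{KdV}) as in (\ref{rhs}), and lets all derivatives of $\Delta q_n$ vanish.

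Your treatment of the initial condition (\ref{IC}) is a second genuine gap: the claim that $\mathbb{H}(x,t)\to\mathbb{H}(\varphi_x)$ in $\mathfrak{S}_1$ as $t\to0^+$ cannot be expected to hold. All the trace-class and compactness machinery is powered by $t>0$: the contour deformation and the constants behind (\ref{s_n decay}) rely on the factor $\exp\{-24\beta t\alpha^2\}$ visible in (\ref{ksi}), which degenerates as $t\downarrow0$, and compactness itself rests on $\xi_{x,t}\in H^\infty+C$, i.e.\ on the cubic phase via Theorem \ref{thGru01}. At $t=0$, with $\left\vert R\right\vert=1$ a.e.\ (which Hypothesis \ref{hyp1.1} permits), $\mathbb{H}(\varphi_x)$ need not be compact, let alone trace class, so your ``$t=0$ inverse-scattering identity'' is not available in this generality. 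The paper does not argue this way at all: (\ref{IC}) is delegated to \cite{RybNON2011}, where it is established by different means. Finally, your one-line dismissal of sequence independence (``automatic since $\mathbb{H}$ depends only on $R$ and $\rho$'') skips the reduction the paper makes explicit: for a general sequence $q_n\to q$ in $L^1_{\operatorname*{loc}}$ one first invokes classical well-posedness for data perturbed on $\mathbb{R}_+$ to justify restricting attention to sequences with $\chi q_n=\chi q$.
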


\begin{proof}
Without loss of generality we can set the splitting point $a=0$. Since the
problem (\ref{KdV})-(\ref{KdVID}) with initial data $\chi q$ is classical, one
only needs to consider $q_{n}$ in Definition \ref{WP} such that $\chi
q_{n}=\chi q$. The KdV equation with initial data $q_{n}\left(  x\right)  $
has a unique classical solution $q_{n}\left(  x,t\right)  $ computed by%
\[
q_{n}\left(  x,t\right)  =-2\partial_{x}^{2}\log\det\left(  \mathbb{I}%
+\mathbb{H}_{n}(x,t)\right)  ,
\]
where $\mathbb{H}_{n}(x,t)$ is the IST Hankel operator corresponding to
$q_{n}$. By Theorem \ref{anal}, $q_{n}\left(  x,t\right)  $ is a meromorphic
function in $x$ on the entire complex plane. Consider the function $q\left(
x,t\right)  $\ given by (\ref{det_form}). By Theorem \ref{anal}, it is well
defined and at least Gevrey smooth. It remains to prove that $q\left(
x,t\right)  =\lim q_{n}\left(  x,t\right)  ,\ n\rightarrow\infty$, solves
(\ref{KdV})-(\ref{KdVID}). By Theorems \ref{th9.5} and \ref{anal} $q\left(
x,t\right)  $ is well defined and at least Gevrey smooth. We rewrite
$q=q_{n}+\Delta q_{n}$, and insert this into (\ref{KdV}):%
\begin{align}
&  \partial_{t}q-6q\partial_{x}q+\partial_{x}^{3}q\label{rhs}\\
&  =\partial_{t}\Delta q_{n}+3\partial_{x}\left[  \left(  \Delta
q_{n}-2q\right)  \Delta q_{n}\right]  +\partial_{x}^{3}\Delta q_{n}.\nonumber
\end{align}
For $\Delta q_{n}$ we have (dropping subscript $x,t$)%
\[
\Delta q_{n}=-2\partial_{x}^{2}\log\det\left(  \mathbb{I}-\left(
\mathbb{I}+\mathbb{H}\right)  ^{-1}\left(  \mathbb{H-H}_{n}\right)  \right)
.
\]
It follows form (\ref{h split}) and (\ref{big fi}) that for the symbol
$\ \Delta\Psi_{n}$ of $\mathbb{H-H}_{n}$ we have
\begin{equation}
\Delta\Psi_{n}(k)=\frac{1}{2\pi i}\int_{\mathbb{R}+ih}\ \ \ \frac{\xi\left(
\lambda\right)  \ \left(  A_{0,n}\left(  \lambda\right)  -A_{0}\left(
\lambda\right)  \right)  d\lambda}{\lambda-k}. \label{delta}%
\end{equation}
But, as we know, $A_{0,n}$ $\rightrightarrows A_{0}$ in $\mathbb{C}^{+}$ as
$n\rightarrow\infty$ and we can easily conclude that $\partial_{t}^{m}%
\partial_{x}^{l}\left(  \mathbb{H-H}_{n}\right)  $ vanishes in the trace norm
as $n\rightarrow\infty$. Therefore $\partial_{t}^{m}\partial_{x}^{l}\Delta
q_{n}\rightarrow0,n\rightarrow\infty$ and (\ref{rhs}) immediately implies that
$q\left(  x,t\right)  $ solves (\ref{KdV}). (\ref{IC}) is proven in
\cite{RybNON2011}. The rest of the statement follows from Theorems
\ref{Thm on s-numbers} and \ref{anal}.
\end{proof}

Theorem \ref{MainThm} concludes our study started in \cite{Ryb10} of step like
initial data with arbitrary behavior at $-\infty$. The relevance to Hankel
operators was realized in \cite{RybNON2011} but we had to impose an additional
condition to prove non-singularity of $\mathbb{I}+\mathbb{H}$. We conjectured
in \cite{RybNON2011} that this condition can be removed. In \cite{GruRyPAMS13}%
, \cite{GruRyOTAA13} we finally linked the IST and Hankel operators but were
unable to completely remove that condition. The new condition was so weak and
subtle that a counterexample would be extremely hard to construct. In
\cite{RybCommPDEs2013} we conveniently used the language of Hankel operators
to prove (1)-(3). The exact connection (\ref{s_n decay}), based upon the
Adamyan-Arov-Krein theory, between the decay of $q\left(  x\right)  $ at
$+\infty$ and smoothness of $\mathbb{H}\left(  x,t\right)  $ (and hence
$q\left(  x,t\right)  $) is first established here. Besides, in our previous
papers we were more dependent on auxiliary results from other sources which
come with somewhat stronger local conditions on $q\left(  x\right)  $. We also
had a more complicated formula for the measure $\rho$ in the scattering data.
The expression (\ref{d rou}) is easiest possible. All this has resulted in a
much more streamlined exposition.

\section{Some corollaries of the main theorem \label{corollaries}}

Theorem \ref{MainThm} readily implies a number of corollaries as well as
quickly recovers and improves on many already known results. Below are some of them.

\subsection{Hirota tau function}

The explicit formula (\ref{det_form}) immediately yields the representation
\[
\tau\left(  x,t\right)  =\det\left(  \mathbb{I}+\mathbb{H}(x,t)\right)  ,
\]
for the Hirota tau function \cite{Hirota71}, a well-known popular object of
soliton theory. The substitution $q\left(  x,t\right)  =-2\partial_{x}^{2}%
\tau\left(  x,t\right)  $ is commonly used as an ansatz to reduce the KdV
equation to the so-called bilinear KdV which is advantageous in some
situations. Formulas like (\ref{det_form}) are particularly convenient for
describing classes of exact solutions (see, e.g. \cite{Ma05}) and $\tau\left(
x,t\right)  $ typically appears as a Wronskian. We refer to \cite{ErcMcKean90}%
, \cite{NovikovetalBook}, \cite{Popper84}, and \cite{Venak86} for
(\ref{det_form}) in the context of the Cauchy problem for the KdV. In our
generality (\ref{det_form}) is new.

\subsection{Rate of convergence}

The relation (\ref{s_n decay}) means that the determinant in (\ref{det_form})
rapidly converges. This fact, coupled with the recent progress in computing
Fredholm determinants \cite{Bornemann2010}, suggests that, contrary to the
common belief, (\ref{det_form}) could be used for numerical evaluations.

\subsection{Analyticity}

Parts (1)-(3) of Theorem \ref{MainThm} say that any, no matter how rough,
locally integrable initial profile $q\left(  x\right)  $ instantaneously
evolves under the KdV flow into a smooth function $q\left(  x,t\right)  $.
This effect, also called dispersive smoothing, has a long history. While being
noticed long ago, its rigorous proof took quit a bit of effort even for box
shaped initial data \cite{Murray(Cohen)78} (see also \cite{Zhou1997} for other
integrable systems). Theorem \ref{MainThm} also implies that the rate of decay
of $q\left(  x\right)  $ at $+\infty$ solely determines smoothness of
$q\left(  x,t\right)  $ for any $q\left(  x\right)  $ essentially bounded from below.

More can be said if $0<\delta<2$. The solution $q\left(  x,t\right)  $ is then
meromorphic in $x$ on the whole complex plane which means that $q\left(
x,t\right)  $ cannot vanish on a set of positive Lebesgue measure for any $t>0
$ unless $q\left(  x\right)  $ is identically zero and we quickly recover and
improve on many results of \cite{Zhang92}. E.g. assuming that $q\left(
x\right)  $ is absolutely continuous and short range, it is proven in
\cite{Zhang92} that $q\left(  x,t\right)  $ cannot have compact support at two
different moments unless it vanishes identically. The techniques of
\cite{Zhang92} also rely on the IST\ and some Hardy space arguments.

If $\delta=2$ then part (2) of Theorem \ref{MainThm} says that $q\left(
x,t\right)  $ is meromorphic in a strip widening proportionally to $\sqrt{t}$.
The closest known result \cite{Tarama04} can only claim that $q\left(
x,t\right)  $ is real analytic and its proof requires strong decay at
$-\infty$ as well as local $L^{2}$ integrability. The approach of
\cite{Tarama04} is based on the classical IST coupled with analysis of the
Airy function and therefore quite involved.

\section{What we don't know but would like to\label{last sect}}

\subsection{Slower decay at $+\infty$}

The most important problem we are particularly concerned with is how much the
decay condition at $+\infty$ could be relaxed. Due to the famous Bourgain
result \cite{Bourgain93} the problem (\ref{KdV})-(\ref{KdVID}) is well-posed
for $q\in L^{2}$. It is important to notice that the Bourgain's paper drew an
enormous attention in the PDE/Harmonic analysis community to the
well-posedness of (\ref{KdV})-(\ref{KdVID}) with singular data from Sobolev
spaces with negative indices (see, e.g., \cite{Tao06} and the extensive
literature cited therein). Thus developing IST techniques\footnote{Or at least
understanding in what sense (\ref{KdV})-(\ref{KdVID}) is more integrable than
a generic PDE.} for such initial data is arguably even more important and is a
long overdue problem. It literally remains an uncharted territory. The formula
(\ref{det_form}) however cannot possibly hold as is for a number of reasons.
We cautiously conjecture that a suitable IST can still be found if $q$ in
Hypothesis \ref{hyp1.1} is merely square integrable at $+\infty$. At this
point we are far from understanding\ how to deal with the mounting serious
issues. In some particular (but interesting) cases the formulas for $R$ and
$\rho$ in (\ref{eq8.2}) and (\ref{d rou}) are still well defined. The IST
Hankel operator $\mathbb{H}\left(  x,t\right)  $ however need no longer be in
the Sarason algebra\footnote{Even boundedness of $\mathbb{H}\left(
x,t\right)  $ may in fact be lost.} resulting in a lack of compactness. On the
other hand the symbol of $\mathbb{H}\left(  x,t\right)  $ clearly says what
the problems are and what can be tried to approach them. For instance, if
$q\left(  x\right)  =O\left(  1/x^{2}\right)  ,\ \left\vert x\right\vert
\rightarrow\infty$, then $\mathbb{L}_{\chi_{a}q}$ may have infinite many
negative bound states for any $a$ but the right reflection coefficient
$R_{a}\left(  k\right)  $ off $q_{a}$ is well behaved for every $k\neq0$ and
at $k=0$ its argument may have a jump discontinuity of size $\gamma$ readily
available from the asymptotic behavior of $q$. The infinite negative spectrum
can be handled by applying an infinite chain of Darboux transforms
\cite{DegSha94}. The jump discontinuity at zero can be modeled by the suitably
defined analytic function $\left(  \dfrac{k-i\varepsilon}{k+i\varepsilon
}\right)  ^{\gamma/2\pi}$,$\ $with any $\varepsilon>0$, allowing us to "factor
out" the undesirable behavior. This way (\ref{det_form}) could be effectively
regularized by singling out the behavior of $q\left(  x,t\right)  $
corresponding to the point $k=0$. Similarly, one can approach Wigner-von
Neumann\ initial profiles $q\left(  x\right)  \sim A\ \dfrac{\sin2\omega x}%
{x},\ \left\vert x\right\vert \rightarrow\infty$. A new additional feature
emerges \cite{Klaus91} here. It is related to the so-called Wigner-von Neumann
resonance\footnote{Which could under certain condition be an embedded positive
bound state.} $\omega^{2}$ and says that the $\arg R\left(  k\right)  $ has a
jump discontinuity at $\pm\omega$ of size $\gamma=2\pi\omega\left\vert
A\right\vert $. As in the previous example, the singular behavior of $R\left(
k\right)  $ is captured by suitably defined function $\left(  b_{\mu}\left(
k\right)  b_{-\overline{\mu}}\left(  k\right)  \right)  ^{\gamma/2\pi}$, where
$b_{\mu}$ is the Blaschke factor with zero $\mu=\omega+i\varepsilon$ for any
$\varepsilon>0$. The formula (\ref{det_form}) gains an extra term associated
with the Wigner-von Neumann resonance. Same way a sum of different Wigner-von
Neumann potentials can be handled. We have not worked out the details~even for
one resonance initial profile. But this would be particularly interesting as
the Matveev's conjecture \cite{Matveev2002} on existence of bounded positon
(breather)\ solutions could be then addressed. One of the challenges is that
the spectral and scattering theory for long range potentials is not as
well-developed as its short-range counterpart and there is no 'one stop
shopping' like the seminal \cite{Deift79}.

\subsection{Asymptotic solutions}

Much of the activity related to the Cauchy problem (\ref{KdV})-(\ref{KdVID})
is concerned with long time asymptotic behavior of its solution. The most
powerful method to study this is arguably the nonlinear steepest descent
method \cite{DeiftZhou1993} based on the Riemann-Hilbert problem. A nice
well-written exposition of this method for (\ref{KdV})-(\ref{KdVID}) is given
in recent \cite{GrunertTeschl09}. Roughly speaking, this approach amounts to
taking one (say right) basic scattering relation (\ref{basic scatt identity})
and its complex conjugate and considering this pair as a two by two matrix
Riemann-Hilbert problem for the row matrix $Y=\left(
\begin{array}
[c]{ccc}%
Ty_{-} &  & y_{+}%
\end{array}
\right)  $. Asymptotics in each region of $\left(  x,t\right)  $ is then
extracted from a very clever multi-step transformation (factorization,
conjugation, contour deformation, etc.) of the original Riemann-Hilbert
problem to the one that captures the asymptotic behavior in that particular
region. No Hankel or Toeplitz operator explicitly appear this way but it is of
course well-known \cite{ClanceyGohbergBOOK} that the Riemann-Hilbert problem
is essentially equivalent to invertibility of a certain Toeplitz
operator\footnote{As a matter of fact, Theorem \ref{thGru01}, of principal
importance to us, was originally found due to some problems having roots in
the Riemann-Hilbert problem \cite{Govorov94}, \cite{Ostrovski92}.}. It is
therefore reasonable to ask if asymptotic solutions of (\ref{KdV}%
)-(\ref{KdVID}) could be obtained entirely within the theory of
Hankel/Toeplitz operators as effectively as using techniques of the
Riemann-Hilbert problem? We don't have a clear vision if this could be the
case. However translating the nonlinear steepest descent method into the
language of Hankel would be of interest in its own right. We hope that more
and deeper connections between soliton theory and the theory of Hankel
operators could be uncovered this way.

It is of course one of our goals to analyze asymptotics of (\ref{KdV}%
)-(\ref{KdVID}) with initial data subject to Hypothesis \ref{hyp1.1}. A
comprehensive treatment of the case $q\left(  x\right)  \rightarrow-h^{2}$ as
$x\rightarrow-\infty$ in all asymptotic regions has been recently given in
\cite{EgorovaetalNon2013}. We also refer to \cite{EgorovaetalNon2013} for a
extensive literature review. The main feature of this case is that such a step
asymptotically splits into an infinite train of solitons twice as high as the
step itself\footnote{In nature this phenomenon can be seen in the so-called
undular bore waves.}. We cautiously conjecture that this type of behavior is
universal as long as $\mathbb{L}_{q}$ has some negative essential spectrum.
This also holds \cite{KK94} if $q\left(  x\right)  $ tends to a periodic
function at $-\infty$. It can be quite easily seen from the considerations of
Subsection \ref{rate of decay} that $q\left(  x,t\right)  \rightarrow0$
uniformly in the region $x/t>h_{0}^{2}$ as $t\rightarrow\infty$. This is as
much as we know at this point. We only mention that both, right and left,
basic scattering relations are used in \cite{EgorovaetalNon2013} to implement
the nonlinear steepest descent method. In our case the left scattering
relation need not exist.

\subsection{Analyticity}

Theorem \ref{MainThm} says that if $0<\delta<2$ then $q\left(  x,t\right)  $
is meromorphic on the whole complex plane for any $t>0$ and hence, as any
meromorphic on $\mathbb{C}$ function, it is completely characterized by a
countable number of time dependent parameters. Viewing a pure soliton solution
as a meromorphic function of $x$ has a long story. In particular, the
importance of pole dynamics was recognized by Kruskal\ \cite{Kruskal74} back
in the early 70s for pure soliton solutions and has been actively studied
since then (see also \cite{Air77}, \cite{Bona2009}, \cite{Segur2000},
\cite{GWeikard06} to mention just four). But very little is known about
meromorphic solutions to the Cauchy problem (\ref{KdV})-(\ref{KdVID}). The
problem boils down to the study of the meromorphic operator valued function
$\left(  \mathbb{I}+\mathbb{H}\left(  x,t\right)  \right)  ^{-1}.$ The general
theory \cite{Steinberg1969} only says that its poles depend continuously on
$t$ and cannot appear or disappear. We are unaware of any relevant helpful
results in the theory of Hankel operators.

\subsection{Unification of the short-range and periodic ISTs}

This is posed in \cite{AC91} as an intriguing open problem. Why the periodic
IST implies the short-range IST is explained in the classical book
\cite{NovikovetalBook} (see also \cite{Venak88} and \cite{ErcMcKean90} for
rigorous related results) but how the periodic IST emerges from the short
range one does not appear to be well understood. We believe that to approach
this problem one needs to consider (\ref{det_form}) for a periodic function
restricted to $\left(  -\infty,a\right)  $ and then let $a\rightarrow\infty$.
We see two reasons why this approach should work: (1) the reflection
coefficient off $\left(  -\infty,a\right)  $ is explicitly representable
\cite{PavSmirn82} in terms of spectral characteristics of the periodic
problem; (2) the solution formula for the periodic case \cite{NovikovetalBook}
has a similar to (\ref{det_form}) structure with the tau function represented
via the theta function. This suggests that the convergence should indeed take
place. However we do not have any insight how difficult this problem could be.
We cautiously suspect that the ideas and machinery used to find asymptotics of
the so-called Toeplitz determinants \cite{BotSil06} could be applied to our
problem. If this indeed can be done within the theory of Hankel/Toeplitz
operators then we might be able to avoid the complicated Riemann surfaces
techniques commonly associated with the periodic IST.

\end{document}